\documentclass[12pt,twoside]{article}
\usepackage{amssymb,amsmath,mathrsfs,txfonts,graphicx,color}
\usepackage{epstopdf}
\usepackage{cite}
\usepackage{bbm}
\usepackage{bm}
\usepackage{amsfonts}

\usepackage{amsmath}
\usepackage{multirow}
\usepackage{booktabs}

\usepackage{cite}
\usepackage{multirow}
\usepackage{float} \usepackage[colorlinks=true]{hyperref}
\let\oldsection\section
\renewcommand\section{\setcounter{equation}{0}\oldsection}

\newtheorem{theorem}{Theorem}[section]
\newtheorem{lemma}{Lemma}[section]
\newtheorem{proposition}{Proposition}[section]

\newtheorem{remark}{Remark}[section]

\allowdisplaybreaks

\begin{document}
\raggedbottom
\title{\Large\bf Global boundedness of the chemotaxis system with
weakly singular sensitivity and nonlocal term in any dimension}
\author{Yang Cao$^{1}$, Qingchun Li$^{1}$, Jing Zhang$^{2}$\thanks{Corresponding author. {\it E-mail}:  zj188838@163.com}
\footnotemark[0]\\
\footnotesize $^{1}$School of Mathematical Sciences, Dalian University of Technology,
Dalian 116024, P. R. China \\
\footnotesize $^{2}$School of Mathematical Sciences, China West Normal University,
Nanchong 637009, P. R. China}
\date{}

\maketitle

\begin{abstract}
This paper is concerned with the parabolic-elliptic chemotaxis system involving weakly singular sensitivity and a nonlocal source:
$u_t=\Delta u-\chi\nabla\cdot\left(\frac{u}{v^k}\nabla v\right)
+u^\alpha\left(r-\mu\int_\Omega u^\beta\,dx\right)$ and $0=\Delta v-v+u^\gamma$
under homogeneous Neumann boundary conditions in a smooth bounded domain
\(\Omega\subset\mathbb{R}^N\) with \(N\ge1\), where \(\chi,r,\mu,\gamma>0\), \(k\in(0,1)\) and \(\alpha,\beta\ge1\).
In view of the enhanced aggregation induced by singular sensitivity and the suppression of excessive population
growth by the nonlocal damping, we identify sufficient conditions under which solutions remain globally bounded.
We show that classical solutions are globally bounded in the following cases:
\par\smallskip
\begin{center}
\small
\setlength{\tabcolsep}{5pt}
\setlength{\arrayrulewidth}{0.3pt}
\renewcommand{\arraystretch}{1.25}
\begin{tabular}{c|c|c|c}
\multirow{2}{*}{$\beta>1$}
& \multirow{2}{*}{$\gamma=1$}
& $N=1$
& $1\leq\alpha<1+2\beta$
\\
\cline{3-4}
&
& $N\geq2$
&
$\begin{gathered}
1\leq\alpha<2,\quad
\alpha+\beta>2+\tfrac{N}{2},\\[-0.5mm]
\text{or}\quad
2\leq\alpha<1+\tfrac{2\beta}{N}
\end{gathered}$
\\
\hline
\multirow{2}{*}{$\beta=1$}
& $\gamma=1$
& $N=1$
& \multirow{2}{*}{
$\begin{array}{l@{\quad}l@{\quad}l}
\text{Case 1:} & 1\leq\alpha<1+\tfrac{2}{N} & \text{if}\quad m_0<\tfrac{r}{\mu},\\[4pt]
\text{Case 2:} & \alpha\geq1               & \text{if}\quad m_0\geq\tfrac{r}{\mu}
\end{array}$
}
\\
\cline{2-3}
& $0<\gamma<\tfrac{2}{N}$
& $N\geq2$
&
\\
\end{tabular}
\end{center}
\par
\smallskip
\noindent Here \(m_0:=\int_\Omega u_0\) denotes the initial total mass.
It is worth noting that both the uniform-in-time \(L^p\)-estimate and the \(L^\infty\)-bound obtained
by Moser iteration are derived without first establishing a uniform positive lower bound for \(v\).
\end{abstract}
{\bf Keywords}: Parabolic-elliptic chemotaxis system; Weakly singular sensitivity; Nonlocal source; Global boundedness.

\newpage
\flushbottom


\section{Introduction and main results}
In this paper, we consider the following parabolic-elliptic chemotaxis system
with weakly singular sensitivity and a nonlocal source
\begin{align} \label{1.3}\allowdisplaybreaks
 \left\{
    \begin{array}{llll}
        \displaystyle u_t=\Delta u-\chi\nabla\cdot(\frac{u}{v^k}\nabla v)+u^\alpha(r-\mu\int_{\Omega}u^\beta), &&x \in\Omega, t>0,
        \\
        \displaystyle 0=\Delta v-v+u^\gamma, &&x \in \Omega, t>0,
        \\
        \displaystyle \frac{\partial u}{\partial \nu}=\frac{\partial v}{\partial \nu}=0,
        &&x\in\partial\Omega,
    \end{array}
 \right.
\end{align}
together with the initial function $u_0(x)=u(x,0)$ for all $x\in\bar{\Omega}$ satisfying
\begin{align}\label{1.4}
u_{0} \in C^{0}(\bar {\Omega}), \quad u_{0} \geq 0 , \quad \int_{\Omega} u_{0} > 0,
\end{align}
where \(\Omega\subset\mathbb{R}^N\) with \(N\ge1\) is a bounded domain with smooth boundary.
The parameters satisfy \(\chi,r,\mu,\gamma>0\), \(k\in(0,1)\) and \(\alpha,\beta\ge1\).
Here $u=u(x,t)$ denotes the density of the mobile population and $v=v(x,t)$ is the concentration of the chemical signal.
The coefficient \(\chi\) represents the chemotactic sensitivity, while \(r\) and \(\mu\) characterize population growth and nonlocal damping.
The term \(u^\gamma\) describes the production of the chemical signal.

Chemotactic pattern formation stems from the interplay between signal-directed migration and random diffusion,
which may produce cellular aggregation and spatial structures. Keller and Segel first interpreted
\textit{Dictyostelium discoideum} aggregation as a cell-signal instability \cite{ref1},
and later proposed a model for traveling bacterial bands \cite{ref2,ref3}.
After nondimensionalization, the system takes the form
\begin{align} \label{1.1}\allowdisplaybreaks
 \left\{
    \begin{array}{llll}
        \displaystyle u_t=\Delta u-\nabla\cdot\bigl(uS(v)\nabla v\bigr)+f(u), &&x \in\Omega,
        \\
        \displaystyle \tau v_t=\Delta v-v+u,   &&x \in\Omega,
    \end{array}
 \right.
\end{align}
with homogeneous Neumann boundary conditions.
Here \(S(v)\) denotes the signal-dependent chemotactic sensitivity in the cross-diffusion term
$-\nabla\cdot\bigl(uS(v)\nabla v\bigr)$, which describes chemotactic movement.
The function \(f(u)\) characterizes the population kinetics,
while \(-v\) and \(+u\) represent linear signal degradation and production by the cells, respectively.

The sensitivity \(S(v)\) is often assumed to be constant,
which corresponds to a cellular response to absolute changes in signal concentration.
Responses in many sensory systems instead depend more naturally on relative changes in the stimulus.
The Weber-Fechner law states that perceived intensity is approximately proportional to the logarithm of the stimulus,
which implies that the direction and magnitude of the perceived signal are represented by \(\nabla(\log v)=\frac{\nabla v}{v}\).
This relation motivates the classical singular sensitivity
\begin{align*}
S(v)=\frac{\chi}{v}, \quad \chi>0.
\end{align*}
The singularity in $S(v)$ has motivated extensive studies of the global existence, boundedness
and finite-time blow-up of solutions.
For the chemotaxis model in the parabolic-elliptic setting without a source term,
namely \(\tau=0\) and \(f(u)=0\), Nagai and Senba \cite{ref4} considered radially symmetric solutions when \(\Omega\) is a ball.
They proved that radial solutions to system \eqref{1.1} are globally bounded for arbitrary \(\chi>0\) when \(N=2\)
and for \(\chi<\frac{2}{N-2}\) when \(N\ge3\),
and constructed finite-time radial blow-up solutions when \(\chi>\frac{2N}{N-2}\).
Fujie, Winkler, and Yokota \cite{ref49} then obtained globally bounded classical solutions of system \eqref{1.1}
under \(\chi<\frac{2}{N}\) for all \(N\ge2\).
In two dimensions, Fujie and Senba \cite{ref7} removed this restriction
and established global boundedness for arbitrary \(\chi>0\).
More recently, Kurt \cite{ref10} slightly improved the classical threshold
\(\chi<\frac{2}{N}\) for global boundedness.
For related results on fully parabolic chemotaxis systems with \(\tau=1\), we refer the reader to
\cite{ref11,ref13,ref16}.
In the presence of a logistic source, the damping effect regulates population growth
and thereby suppresses excessive aggregation and prevents blow-up.
For the two-dimensional parabolic-elliptic model with \(f(u)=ru-\mu u^2\), Fujie et al. \cite{ref18}
established the global existence of classical solutions for any \(\chi,\mu>0\)
and uniform boundedness under suitable conditions on \(r\).
Cao et al. \cite{ref19} proved solutions to system \eqref{1.1} converge exponentially to a positive
constant equilibrium under stronger lower bounds on \(r\) and additional assumptions on the initial data.
Kurt and Shen \cite{ref24} extended the analysis to space- and time-dependent logistic coefficients
and obtained global existence and boundedness in arbitrary dimensions under suitable assumptions.
The corresponding fully parabolic problems can be found in \cite{ref20,ref21,ref22,ref23}.

As a generalization of the classical singular sensitivity, the weakly singular
form \(S(v)=\frac{\chi}{v^k}\) with \(0<k<1\) has attracted considerable attention in recent years.
Zhao \cite{ref27} studied the following two-dimensional parabolic-elliptic system
\begin{align} \label{1.2.1}\allowdisplaybreaks
 \left\{
    \begin{array}{llll}
        \displaystyle u_t=\Delta u-\chi\nabla\cdot(\frac{u}{v^k}\nabla v)+ru-\mu u^2, &&x \in\Omega,
        \\
        \displaystyle 0=\Delta v-v+u, &&x \in \Omega.
    \end{array}
 \right.
\end{align}
For every \(k\in(0,1)\), he showed that system \eqref{1.2.1} admits a unique globally bounded classical solution
whenever \(\mu>\mu^*(u_0,\Omega,\chi,k,r)\).
His extension argument uses a time-decaying lower bound for \(v\),
whereas uniform boundedness is established without a time-uniform positive lower bound.
Kurt \cite{ref29} then extended the result of \cite{ref27} to arbitrary dimensions
and introduced a threshold \(\mu^*\) independent of the initial data.
When \(\mu>\mu^*\), global classical solutions exist for every \(N\ge1\) and are uniformly bounded for \(N\ge2\) if \(k<\frac12+\frac1N\).
Unlike \cite{ref27}, the \(L^p\) estimates in \cite{ref29} require no preliminary pointwise lower bound for \(v\).
In two dimensions, Le \cite{ref30} replaced $-\mu u^2$ with logarithmically weakened sub-logistic damping
and removed the largeness condition on \(\mu\).
Le and Kurt \cite{ref31} later eliminated the restriction linking \(k\) to the dimension.
They established global boundedness of solutions to system \eqref{1.2.1} for every \(N\ge3\)
and \(k\in(0,1)\) provided that $\mu$ exceeds an explicit threshold.
Le \cite{ref32} later extended the entire range \(k\in(0,1)\) to the fully parabolic system
in dimensions \(N\ge3\) and established global boundedness for sufficiently large \(\mu\).
Both studies directly controlled the singular terms without requiring a uniform-in-time positive lower bound for \(v\).

The local logistic term makes population growth depend only on local density,
whereas ecological dynamics may also be shaped by total resources, population size and long-range interactions.
Such global feedback can be represented at the mean-field level
by the nonlocal source \(u^\alpha(r-\mu\int_\Omega u^\beta)\).
When \(\beta>1\), the term \(\int_\Omega u^\beta\) in the nonlocal source can
control higher powers of the cell density and suppress excessive aggregation.
Bian et al. \cite{ref34} investigated the following parabolic-elliptic system
\begin{align} \label{1.2}\allowdisplaybreaks
 \left\{
    \begin{array}{llll}
        \displaystyle u_t=\Delta u-\nabla\!\cdot(u\nabla v)+u^\alpha\left(1-\int_\Omega u^\beta\right), &&x \in\Omega,
        \\
        \displaystyle 0=\Delta v-v+u, &&x \in \Omega,
    \end{array}
 \right.
\end{align}
where \(\alpha\ge1\).
For \(N\ge3\) and \(\beta>1\), they proved that the system \eqref{1.2} admits a unique globally bounded classical solution
if either \(2\le\alpha<1+\frac{2\beta}{N}\) or \(\alpha<2\) and \(\frac{N+2}{N}(2-\alpha)<1+\frac{2\beta}{N}-\alpha\).
The parabolic-elliptic results were then extended to the fully parabolic setting by Chiyo et al. \cite{ref36}.
They established global boundedness of classical solutions for every \(N\ge1\) if either \(1\le\alpha<2\)
and \(\beta>\frac{N+4}{2}-\alpha\) or \(\beta>\frac N2\) and \(2\le\alpha<1+\frac{2\beta}{N}\).
Further results on finite-time blow-up and multispecies nonlocal regulation
are presented in \cite{ref37,ref42,ref43}.
With the singular sensitivity \(\frac{1}{v}\), Du et al. \cite{ref39}
studied a fully parabolic chemotaxis system involving a nonlocal source.
For \(N\ge2\), they established the existence and uniform boundedness of
a unique global classical solution with suitable parameter conditions.

Biologically, singular chemotactic sensitivity and nonlocal population regulation
influence population dynamics through distinct mechanisms.
On the one hand, chemotaxis models with singular sensitivity can describe traveling bands of chemotactic bacteria,
front propagation and spatially heterogeneous structures \cite{ref20,ref18,ref11-1}.
In the presence of signal consumption or a local logistic source,
such models may also exhibit single or multiple spikes and may undergo chemotactic collapse under suitable conditions \cite{ref4,ref11-2}.
On the other hand, the nonlocal source regulates birth and death through shared-resource consumption and total population size \cite{ref36,ref11-3}.
Appropriate nonlocal damping can prevent cell aggregation and chemotactic collapse,
and may even drive the population toward a spatially homogeneous state \cite{ref11-3}.
If this damping is insufficient, finite-time blow-up may still occur \cite{ref10}.
Moreover, intraspecific competition for limited resources over a broad spatial range
may generate steady spatially periodic structures, periodic standing waves and periodic travelling waves \cite{ref11-4}.
During population invasion, a travelling wavefront may develop a local high-density hump and leave a nonuniform steady state in its wake \cite{ref11-5}.
These biological phenomena are closely related to the distinct mathematical roles of the two mechanisms.
Although the singularity of \(v^{-k}\) for \(0<k<1\) is milder near \(v=0\) than that of \(v^{-1}\),
the sensitivity remains unbounded as \(v\to0\) and may therefore significantly enhance chemotactic aggregation
in regions of low signal concentration.
For the nonlocal source \(u^\alpha\big(r-\mu\int_\Omega u^\beta\big)\) with \(\beta>1\),
the term \(\int_\Omega u^\beta\) provides higher-order nonlocal damping,
which can prevent blow-up under suitable conditions \cite{ref36,ref11-3}.
When \(\beta=1\), the feedback depends only on the total population,
and its role depends on the initial mass and the exponent \(\alpha\).
For \(\int_\Omega u_0\ge r/\mu\), the source remains nonpositive and acts as damping.
When \(\int_\Omega u_0<r/\mu\), it initially promotes growth and aggregation may intensify as \(\alpha\) increases.
The interplay between singular sensitivity and the nonlocal source can lead to
more intricate dynamical behaviors and increased complexity of the mathematical analysis.
To the best of our knowledge, relatively few results are available for chemotaxis systems incorporating both mechanisms,
and a recent result can be found in \cite{ref39}.
Accordingly, this paper aims to establish sufficient conditions for global existence and uniform boundedness of solutions
to the chemotaxis system involving both mechanisms.
These results will lay a foundation for further studies of pattern formation, front propagation and long-time behavior.

Before stating the theorem, we introduce the parameter condition \(\mathcal{H}\), which is satisfied if one of the following holds:
\begin{itemize}
\item[\(\mathrm{(H_1)}\)]
Suppose that \(\beta>1\) and \(\gamma=1\). If \(N=1\), assume that
\[
1\leq\alpha<2
\quad\text{or}\quad
2\leq\alpha<1+2\beta.
\]
If $N\geq2$, assume that
\[
1\leq\alpha<2,\quad\alpha+\beta>2+\frac N2,\quad\text{or}\quad
2\leq\alpha<1+\frac{2\beta}{N}.
\]

\item[\(\mathrm{(H_2)}\)]
Suppose that \(\beta=1\), with \(\gamma=1\) when \(N=1\) and \(0<\gamma<\frac{2}{N}\) when \(N\ge2\).

If $\int_\Omega u_0<\frac{r}{\mu}$, we further assume that $1\leq\alpha<1+\frac{2}{N}$.

If $\int_\Omega u_0\geq\frac{r}{\mu}$, we only require \(\alpha\geq1\).
\end{itemize}

Our first main result provides uniform-in-time $L^p(\Omega)$-bounds for $u$.
The proof distinguishes between the cases $\beta>1$ and $\beta=1$:
the higher-order terms are controlled by the nonlocal dissipation in the former case and by the evolution of the total mass in the latter.
Combining these controls with energy and interpolation estimates yields the following result.
\begin{theorem}\label{th3.2}
Let the initial datum \(u_0\) satisfy \eqref{1.4}. Suppose that
\(\chi>0\), \(k\in(0,1)\) and \(\mathcal{H}\) holds. Then for
every \(p\geq2\), there exists a constant \(C_p>0\) such that
\[
\sup_{0<t<T_{\max}}\int_\Omega u^p(x,t)\leq C_p,
\]
where \(C_p\) is independent of \(t\).
\end{theorem}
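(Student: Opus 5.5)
We test the first equation of \eqref{1.3} against $u^{p-1}$ and integrate by parts. This gives
\[
\frac1p\frac{d}{dt}\int_\Omega u^p+(p-1)\int_\Omega u^{p-2}|\nabla u|^2
=(p-1)\chi\int_\Omega \frac{u^{p-1}}{v^k}\nabla u\cdot\nabla v+\Big(r-\mu\int_\Omega u^\beta\Big)\int_\Omega u^{p+\alpha-1}.
\]
The singular cross term is handled without a lower bound for $v$. Write $\frac{u^{p-1}}{v^k}\nabla u\cdot\nabla v=\frac1p\nabla u^p\cdot\frac{\nabla v}{v^k}$ and note that $\frac{\nabla v}{v^k}=\frac1{1-k}\nabla v^{1-k}$. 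Integrating by parts and using the Neumann condition gives
\[
\frac{p-1}{p}\chi\int_\Omega \nabla u^p\cdot\frac{\nabla v}{v^k}=-\frac{(p-1)\chi}{p}\int_\Omega u^p\Big(\frac{\Delta v}{v^k}-k\frac{|\nabla v|^2}{v^{k+1}}\Big).
\]
The second equation gives $\Delta v=v-u^\gamma$, so $-\frac{\Delta v}{v^k}=\frac{u^\gamma}{v^k}-v^{1-k}\le \frac{u^\gamma}{v^k}$. The dangerous term $\int_\Omega u^{p+\gamma}v^{-k}$ therefore remains, and it carries a favorable sign partner $-k\int_\Omega u^p|\nabla v|^2v^{-k-1}$ only with the wrong sign (positive). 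I would therefore not integrate by parts fully. Instead I would use Young's inequality directly on the cross term:
\[
\chi(p-1)\int_\Omega\frac{u^{p-1}}{v^k}\nabla u\cdot\nabla v\le\frac{p-1}{2}\int_\Omega u^{p-2}|\nabla u|^2+\frac{(p-1)\chi^2}{2}\int_\Omega u^p\frac{|\nabla v|^2}{v^{2k}}.
\]
Then I would control $\int_\Omega u^p|\nabla v|^2v^{-2k}$ by an elliptic weighted estimate. Testing $0=\Delta v-v+u^\gamma$ against suitable functions $u^p v^{1-2k}$, or using the identity for $\nabla v^{1-k}$ above, bounds $\int_\Omega u^p|\nabla v|^2v^{-2k}$ by $C\int_\Omega u^{p}v^{-k}u^\gamma$. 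Since $1-2k$ may be negative, I would instead test with $u^pv^{-k}$-type weights, following Kurt and Le. The remaining term $\int_\Omega u^{p+\gamma}v^{-k}$ is split by $v^{-k}$. I would use the pointwise lower bound $v\ge c\int_\Omega u^\gamma$ coming from the Green function of $-\Delta+1$, valid in any dimension. When $\gamma=1$ this lower bound is uniform because mass is controlled (see below). Alternatively, Young's inequality gives $u^{p+\gamma}v^{-k}\le \varepsilon u^{p+\gamma+\delta}+C v^{-k(p+\gamma+\delta)/\delta}$, and the second integral is bounded through $\int_\Omega v^{-q}$ estimates obtained from $\int_\Omega v^{-1}$-type bounds. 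I expect this step to be the main obstacle: it must produce a pure-power remainder $\int_\Omega u^{p+\gamma+\delta}$ with $\delta$ arbitrarily small.

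Next comes the mass and damping step. For $\beta>1$, integrating the $u$-equation gives $\frac{d}{dt}\int_\Omega u\le r\int_\Omega u^\alpha-\mu\int_\Omega u^\alpha\int_\Omega u^\beta$. The quantity $\mu\int_\Omega u^\beta\int_\Omega u^{p+\alpha-1}$ is the damping. The reaction term is absorbed by it together with $r\int_\Omega u^{p+\alpha-1}$ and the cross remainder $\int_\Omega u^{p+1+\delta}$. To do this I would use the Gagliardo--Nirenberg inequality applied to $w=u^{p/2}$, namely $\|w\|_{L^{2q/p}}\le C\|\nabla w\|_2^{\theta}\|w\|_{L^{2\beta/p}}^{1-\theta}+C\|w\|_{L^{2\beta/p}}$, which interpolates between the dissipation $\int_\Omega|\nabla u^{p/2}|^2$ and $\int_\Omega u^\beta$. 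The stated conditions on $\alpha$ and $\beta$ are exactly those that make the resulting exponent bookkeeping admissible. When $\alpha\ge2$, the damping term dominates $\int_\Omega u^{p+1}$ via $\alpha<1+2\beta/N$. When $\alpha<2$, the diffusion must absorb $\int_\Omega u^{p+1}$ using $\alpha+\beta>2+N/2$. For $\beta=1$, $m(t)=\int_\Omega u$ satisfies $m'=(r-\mu m)\int_\Omega u^\alpha$. Hence $m(t)\le\max\{m_0,r/\mu\}$ and $m(t)\ge\min\{m_0,r/\mu\}>0$, so the source term is $\le r_+\int_\Omega u^{p+\alpha-1}$ with mass bounded. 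If $m_0\ge r/\mu$, the source term is nonpositive and the result for any $\alpha$ follows from the chemotactic estimate alone. In that case $\gamma<2/N$ lets Gagliardo--Nirenberg with the $L^1$ bound absorb $\int_\Omega u^{p+\gamma+\delta}$ into the dissipation. If $m_0<r/\mu$, the same Gagliardo--Nirenberg absorption works for $\int_\Omega u^{p+\alpha-1}$ precisely when $\alpha-1<2/N$.

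Finally, adding $\int_\Omega u^p$ to both sides yields $y'+y\le C$ for $y=\int_\Omega u^p$. An ODE comparison then gives $\sup_t\int_\Omega u^p\le \max\{\int_\Omega u_0^p,C\}$ for each $p\ge2$, which is the bound claimed in Theorem~\ref{th3.2}.
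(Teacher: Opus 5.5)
\textbf{There is a genuine gap.} It sits in the step you flag yourself as the main obstacle, and neither of your proposed fixes closes it.

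The Green-function bound $v\ge c\int_\Omega u^\gamma$ is not uniform in time where you need it.
\begin{itemize}
\item For $\beta>1$, the mass identity gives only an upper bound on $\int_\Omega u$. Your inequality $\frac{d}{dt}\int_\Omega u\le r\int_\Omega u^\alpha-\mu\int_\Omega u^\alpha\int_\Omega u^\beta$ says nothing from below, because $\int_\Omega u^\beta$ is not controlled by $\int_\Omega u$.
\item For $0<\gamma<1$, even the lower mass bound available when $\beta=1$ gives no lower bound on $\int_\Omega u^\gamma$, since concentration makes $\int_\Omega u^\gamma$ small at fixed mass.
\end{itemize}
This is exactly why the paper obtains $v\ge\delta_v$ only in Lemma \ref{lem:v-lower}, after the $L^\infty$ bound, using $u^\gamma\ge K^{\gamma-1}u$. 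Your lower-bound route is legitimate only in the sub-case $\beta=\gamma=1$, $N=1$. The alternative through uniform bounds on $\int_\Omega v^{-q}$ for large $q$ is unsupported for the same reason.

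Your test functions $u^pv^{1-2k}$ and $u^pv^{-k}$ also do not give the claimed bound. Testing with $u^pv^{-m}$ and applying Young to the resulting cross term leaves a weighted dissipation $\frac1m\int_\Omega v^{1-m}|\nabla u^{\frac p2}|^2$. For $m\neq1$ this can only be absorbed using a pointwise bound on $v$. Your choices have $m<1$, which requires an upper bound on $v$ that is not available a priori when $\gamma=1$, $N\ge2$.

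\textbf{The missing idea is to take $m=1$.} Testing $0=\Delta v-v+u^\gamma$ with $u^p/v$ gives Lemma \ref{le2.2}:
\[
\frac12\int_\Omega u^p\frac{|\nabla v|^2}{v^2}+\int_\Omega\frac{u^{p+\gamma}}{v}\le 2\int_\Omega|\nabla u^{\frac p2}|^2+\int_\Omega u^p,
\]
with no bound on $v$ in either direction. Young's inequality then interpolates $v^{-k}$ between $v^{-1}$ (with a small coefficient) and $v^0$. Thus $\chi(p-1)\int_\Omega u^{p+\gamma}v^{-k}$ becomes a fraction of the dissipation plus the pure power $c\int_\Omega u^{p+\gamma}$, which is your $\delta=0$. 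Likewise, $u^p|\nabla v|^2v^{-k-1}$ is split between $v^{-2}$ and $v^{-1}$.

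The leftover $\int_\Omega u^p|\nabla v|^2/v$ still needs its own estimate:
\begin{itemize}
\item for $\gamma=1$, the paper bounds it by $C\int_\Omega u^{p+1}$ using H\"older, Lemma \ref{le2.3} with $\rho=\kappa=p+1$, and Lemma \ref{le2.4};
\item for $\gamma<\frac2N$, it uses $\|v\|_{L^\infty}\le L_1$ and the test function $u^p\ln\frac{v}{L_1}$ (Lemma \ref{le2.11}).
\end{itemize}
Once you have Lemma \ref{le2.2}, your direct Young estimate on the cross term would also close. Use $v^{-2k}\le k\lambda v^{-2}+(1-k)\lambda^{-\frac{k}{1-k}}$ together with a bound on $\int_\Omega u^p|\nabla v|^2$.

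\textbf{Your $\beta>1$ bookkeeping is only asserted.} The mechanism is the intermediate exponent $\frac{p+\alpha-1+\beta}{2}$. With this choice, H\"older between $L^\beta$ and $L^{p+\alpha-1}$ turns the Gagliardo--Nirenberg remainder into a power of the product $\int_\Omega u^{p+\alpha-1}\int_\Omega u^\beta$. That power is less than one exactly under $\mathrm{(H_1)}$.
\begin{itemize}
\item For $\alpha<2$ and $N\ge2$, it is this nonlocal damping, not diffusion together with the mass bound, that absorbs $\int_\Omega u^{p+1}$.
\item For $\alpha\ge2$ and $N\ge3$, interpolating against $\|u\|_{L^\beta}$ presupposes the uniform $L^\beta$ bound \eqref{4.19.2}, which the paper first derives by a separate argument.
\end{itemize}
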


We next use the $L^p$-bounds from Theorem \ref{th3.2}, together with elliptic regularity and a Moser iteration,
to obtain a uniform $L^\infty$-bound for $u$ without first establishing a positive lower bound for $v$.
This bound then yields time-uniform positive lower bounds for $\int_\Omega u$ and $v$.
Consequently, the extensibility criterion gives the following global existence and uniform boundedness result.
\begin{theorem}\label{th3.4}
Under the assumptions of Theorem \ref{th3.2}, the solution of
\eqref{1.3} exists globally and remains uniformly bounded in the sense that
there exists \(C_\infty>0\) such that
\[
\|u(\cdot,t)\|_{L^\infty(\Omega)}\leq C_\infty,
\]
for all \(t>0\).
\end{theorem}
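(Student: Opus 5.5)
Starting from Theorem \ref{th3.2}, I would fix some $p_0>\max\{N,\,N\gamma\}$ large and use the uniform bound $\sup_{t<T_{\max}}\|u(\cdot,t)\|_{L^{p_0}}\le C$. This gives $u^\gamma$ bounded in $L^{p_0/\gamma}$ with $p_0/\gamma>N$. Elliptic $W^{2,q}$ regularity for $-\Delta v+v=u^\gamma$ under Neumann conditions, followed by Sobolev embedding, then yields a uniform bound $\|v(\cdot,t)\|_{W^{1,\infty}}\le C$. I also need control of the nonlocal factor. The total mass is bounded, since $\int_\Omega u\le |\Omega|^{1-1/p_0}\|u\|_{L^{p_0}}$. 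For $\beta\ge1$, Hölder gives $\int_\Omega u^\beta\ge |\Omega|^{1-\beta}(\int_\Omega u)^\beta\ge 0$, so the source satisfies $u^\alpha(r-\mu\int_\Omega u^\beta)\le r u^\alpha$, and $\int_\Omega u^\beta$ is itself bounded by Theorem \ref{th3.2}.

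The key difficulty is the factor $v^{-k}$ in the flux, because no positive lower bound for $v$ is available yet. I would write the taxis term as $\frac{u}{v^k}\nabla v=\frac{1}{1-k}u\nabla(v^{1-k})$. I would then handle it in the $L^p$-testing through the identity $\Delta v^{1-k}=(1-k)v^{-k}\Delta v-k(1-k)v^{-k-1}|\nabla v|^2\le (1-k)v^{-k}(v-u^\gamma)\le(1-k)v^{1-k}$. Testing the $u$-equation with $u^{p-1}$ gives
\begin{align*}
\frac1p\frac{d}{dt}\int_\Omega u^p+(p-1)\int_\Omega u^{p-2}|\nabla u|^2
=\frac{\chi(p-1)}{p(1-k)}\int_\Omega \nabla u^p\cdot\nabla v^{1-k}+\int_\Omega u^{p+\alpha-1}\Bigl(r-\mu\int_\Omega u^\beta\Bigr).
\end{align*}
Integrating the first term on the right by parts produces $-\frac{\chi(p-1)}{p(1-k)}\int_\Omega u^p\Delta v^{1-k}$. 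By the inequality above, this is at most $\frac{\chi(p-1)}{p}\int_\Omega u^p v^{1-k}$, and $\|v\|_{L^\infty}\le C$ bounds that by $C p\int_\Omega u^p$. The source contributes at most $r\int_\Omega u^{p+\alpha-1}$. Thus
\[
\frac{d}{dt}\int_\Omega u^p+\frac{4(p-1)}{p}\int_\Omega|\nabla u^{p/2}|^2\le Cp^2\int_\Omega u^p+rp\int_\Omega u^{p+\alpha-1}.
\]
No lower bound on $v$ enters.

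To run the Moser iteration I need to absorb the superlinear term $\int_\Omega u^{p+\alpha-1}$. I would write $u^{p+\alpha-1}=u^p\cdot u^{\alpha-1}$ and use Hölder with $\|u^{\alpha-1}\|_{L^{q}}$ bounded for some $q>N/2$; this holds by Theorem \ref{th3.2}, since all $L^p$ norms are uniformly bounded. This gives $\int_\Omega u^{p+\alpha-1}\le C\|u^{p/2}\|_{L^{2q'}}^2$ with $2q'<2N/(N-2)$. The Gagliardo--Nirenberg inequality then bounds $\|u^{p/2}\|_{L^{2q'}}^2\le \varepsilon\|\nabla u^{p/2}\|_{L^2}^2+C\varepsilon^{-a}\|u^{p/2}\|_{L^1}^2$, where the exponent $a$ is independent of $p$. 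Choosing $\varepsilon\sim p^{-1}$ and adding $\int_\Omega u^p$ to both sides (interpolated the same way) yields
\[
\frac{d}{dt}\int_\Omega u^p+\int_\Omega u^p\le C p^{b}\Bigl(\int_\Omega u^{p/2}\Bigr)^2
\]
for a fixed $b>0$. Setting $p_j=2^j p_0$ and $M_j=\sup_t\max\{\int_\Omega u_0^{p_j},\int_\Omega u^{p_j}\}$, I obtain $M_j\le C^{\,j}M_{j-1}^2$. The standard Alikakos--Moser argument then gives $\limsup_j M_j^{1/p_j}<\infty$, which is the $L^\infty$ bound on $[0,T_{\max})$.

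For global existence, I would use the extensibility criterion of the local theory: if $T_{\max}<\infty$ then $\|u\|_{L^\infty}\to\infty$, or $v$ degenerates. The mass equation gives $\frac{d}{dt}\int_\Omega u\ge-\mu\bigl(\int_\Omega u^\beta\bigr)\int_\Omega u^\alpha\ge -C\int_\Omega u$, using $u^\alpha\le C_\infty^{\alpha-1}u$. So $\int_\Omega u$ stays positive on bounded time intervals, in fact uniformly via the growth term. A lower bound on $\int_\Omega u^\gamma$ then follows from the $L^\infty$ bound. The Neumann heat kernel (Green function) lower bound for $(I-\Delta)^{-1}$ gives $v\ge c\int_\Omega u^\gamma>0$. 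Hence $T_{\max}=\infty$ and the $L^\infty$ bound holds for all $t>0$. I expect the main obstacle to be the integration by parts in the taxis term, where one must check that the sign structure of $\Delta v^{1-k}$ really removes $v^{-k}$ without any pointwise lower bound on $v$. Justifying this near points where $v$ is small may require working with $(v+\delta)^{1-k}$ and letting $\delta\to0$.
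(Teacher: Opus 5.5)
Your argument fails at the step you flagged yourself, and the cause is a sign error, not a regularity issue. The identity for $\Delta v^{1-k}$ and the bound $\Delta v^{1-k}\le(1-k)v^{1-k}$ are both correct. However, the taxis contribution is $-\frac{\chi(p-1)}{p(1-k)}\int_\Omega u^p\Delta v^{1-k}$, so an \emph{upper} bound on $\Delta v^{1-k}$ only yields a \emph{lower} bound on this term. Written out exactly,
\[
\frac{\chi(p-1)}{p(1-k)}\int_\Omega \nabla u^p\cdot\nabla v^{1-k}
=\frac{\chi(p-1)}{p}\int_\Omega\Bigl(\frac{u^{p+\gamma}}{v^k}+k\,u^p\frac{|\nabla v|^2}{v^{k+1}}-u^pv^{1-k}\Bigr).
\]
The two nonnegative terms here are exactly the singular aggregation terms in \eqref{4.1}, and they are unbounded where $v$ is small. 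As a check, for $k=0$ and $\gamma=1$ your inequality would read $\int_\Omega u^{p+1}\le 2\int_\Omega u^p v$, which fails for concentrated $u$. So the claim that no lower bound on $v$ enters is unproved. Regularizing with $(v+\delta)^{1-k}$ cannot repair a sign.

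The missing ingredient is a bound for $\int_\Omega u^{p+\gamma}v^{-k}$ and $\int_\Omega u^p|\nabla v|^2v^{-k-1}$ by the dissipation plus non-singular terms, with constants growing only polynomially in $p$. The paper obtains this from Lemma \ref{le2.2}: testing $0=\Delta v-v+u^\gamma$ with $u^p/v$ gives
\[
\tfrac12\int_\Omega u^p|\nabla v|^2v^{-2}+\int_\Omega u^{p+\gamma}v^{-1}\le 2\int_\Omega|\nabla u^{p/2}|^2+\int_\Omega u^p .
\]
Young's inequality with weight $\lambda\sim 1/p$ then interpolates $v^{-k}$ between $v^{-1}$ and $v^{0}$, and $v^{-(1+k)}$ between $v^{-2}$ and $v^{0}$. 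The leftover terms are of the form $p^{1/(1-k)}\int_\Omega u^{p+\gamma}$ and $p^{2/(1-k)}\|\nabla v\|_{L^\infty}^2\int_\Omega u^p$; see \eqref{4.2} and \eqref{5.2}.

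Once you insert this, the rest of your plan goes through. You absorb $\int_\Omega u^{p+\alpha-1}$ and $\int_\Omega u^{p+\gamma}$ by H\"older against a fixed $L^q$ norm with $q>N/2$, supplied by Theorem \ref{th3.2}. You then apply Gagliardo--Nirenberg with $\|u^{p/2}\|_{L^1}$. This is a legitimate and simpler alternative to the paper's route, which ties the exponents $p_j$ to the nonlocal damping $\mu p\int_\Omega u^{p+\alpha-1}\int_\Omega u^\beta$. Your extensibility argument essentially agrees with Lemma \ref{lem:v-lower}.
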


\begin{remark}
Since the sensitivity $v^{-k}$ becomes singular as $v\to0$, a uniform-in-time positive lower bound
for \(v\) is key to controlling the singular chemotactic cross-diffusion.
For parabolic-elliptic chemotaxis systems with a local logistic source, a typical approach starts by deriving an \(L^p\)-bound for \(u\).
One then tests the cell equation with \(u^{-1}\), thereby obtaining a positive lower bound for \(\int_\Omega u\) from a lower estimate for \(\frac{\mathrm d}{\mathrm dt}\int_\Omega\ln u\).
The elliptic equation subsequently provides a positive lower bound for \(v\), which can be used to estimate \(\|u\|_{L^\infty(\Omega)}\).
However, the presence of the nonlocal source makes the logarithmic test
produce a coupling term involving \(\int_\Omega u^\beta\) and \(\int_\Omega u^{\alpha-1}\), which cannot be controlled solely through the total mass.
Thus, the above argument does not directly provide the required
uniform-in-time lower bound and therefore fails to close the \(L^\infty\)-estimate.
We consequently seek a direct \(L^\infty\)-estimate through Moser iteration
without first using a uniform-in-time positive lower bound for \(v\).
Although Le and Kurt also employed Moser iteration \cite{ref31}, their method
relies on the power structure generated by the local quadratic logistic source.
However, the nonlocal source in system \eqref{1.3} produces the higher-order
growth term \(\int_\Omega u^{p+\alpha-1}\) and couples it with \(\int_\Omega u^\beta\) in the damping term.
When \(\alpha>2\), estimating the growth term directly imposes an additional
restriction on \(\alpha\), whereas discarding the coupled damping term loses the
dissipation needed to derive the desired bound.
To this end, we choose the exponent at each step according to the two powers
\(\beta\) and \(p+\alpha-1\) in the nonlocal dissipation term.
This allows us to complete the Moser iteration and obtain an \(L^\infty\)-bound for \(u\).
\end{remark}

\begin{remark}
For $\beta>1$ and $\gamma=1$, we establish the global boundedness
of the solution to system \eqref{1.3} which involves weakly singular sensitivity and a nonlocal source for \(N\geq1\).
Compared with the constant sensitivity in the nonlocal chemotaxis model studied by Bian et al. \cite{ref34},
the weakly singular sensitivity
\(v^{-k}\) considered here further enhances chemotactic aggregation at low signal
concentrations. Under this stronger aggregation mechanism, our result for
\(N\geq3\) still coincides with that of \cite{ref34}.
The approach adopted by Bian et al. relies on the finite Sobolev critical
exponent \(\frac{2N}{N-2}\), and hence their interpolation framework is
applicable only when \(N\geq 3\).
To address the lower-dimensional cases, we employ dimension-dependent Gagliardo-Nirenberg inequalities and choose intermediate and iteration exponents adapted to the structure of the nonlocal dissipation.
These choices enable us to complete the \(L^p\)-estimates and Moser iteration, thereby establishing global boundedness for \(N=1,2\).
\end{remark}

\begin{remark}
In the case \(\beta=1\), we prove global boundedness of solutions to a parabolic-elliptic chemotaxis system with weakly singular sensitivity and a nonlocal Fisher--KPP reaction term. For \(\gamma=1\), \(N=1\) and \(\int_\Omega u_0<\frac{r}{\mu}\), compared with the result obtained by Bian et al. \cite{ref46} for the nonlocal Fisher--KPP equation without chemotaxis, we still find a wider range of parameters for which global boundedness holds in the presence of stronger chemotactic aggregation.
The argument developed by Bian et al. controls the positive source term through interpolation with an intermediate norm, while the associated absorption condition requires \(1\leq\alpha<2\) in one dimension.
In the present paper, we employ weighted estimates derived from the elliptic signal equation together with the total mass estimate and the Gagliardo-Nirenberg inequality, which enables us to absorb the chemotactic and positive source terms into the gradient dissipation.
This refined estimate ultimately establishes global boundedness in one dimension throughout the range \(1\leq\alpha<3\).
\end{remark}

\begin{remark}
For \(\beta=1\), if \(\int_\Omega u_0\ge \frac{r}{\mu}\), the mass dynamics ensures \(r-\mu\int_\Omega u\le0\),
and the nonlocal source therefore acts purely as a damping mechanism.
If \(\int_\Omega u_0<\frac{r}{\mu}\), the source is initially proliferative,
and the condition \(\alpha<1+\frac{2}{N}\) prevents this growth from reinforcing excessive aggregation.
Moreover, higher-dimensional settings are more conducive to aggregation and even to blow-up.
To ensure global existence and boundedness of solutions,
it is necessary that the signal production be reduced from the linear form \(u\)
to the sublinear form \(u^\gamma\) with \(0<\gamma<\frac{2}{N}\),
thereby weakening chemotactic aggregation.
\end{remark}

The rest of this paper is organized as follows.
Section 2 presents the preliminary lemmas needed for the subsequent analysis.
In Section 3, we establish uniform-in-time \(L^p\) estimates for the solution.
Section 4 is devoted to the \(L^\infty\) bound,
which completes the proof of global existence and uniform boundedness.

\section{Preliminaries}
This section is devoted to establishing two basic estimates for \eqref{1.3} and collecting the analytical tools to be used later.
For \(N\geq3\), set
\begin{equation}\label{2.0}
q:=\frac{2N}{N-2}.
\end{equation}
Without loss of generality, we suppose $|\Omega|=1$.
We first establish the local existence and uniqueness of classical solutions to problem \eqref{1.3}.

\begin{proposition} \label{pro4.1}
(Local existence)
Assume that \(\alpha,\beta\ge1\), \(0<\gamma\le1\) and that the initial datum \(u_0\) satisfies \eqref{1.4}.
Then there exist a maximal existence time $T_{\max}\in(0,\infty]$ and
a nonnegative pair $(u(x,t),v(x,t))$ of functions
\[
u\in C\big(\bar\Omega\times(0,T_{\max})\big)
\cap C^{2,1}\big(\bar\Omega\times(0,T_{\max})\big),
\quad
v\in C^{2,0}\big(\bar\Omega\times(0,T_{\max})\big),
\]
satisfying \eqref{1.3} in the classical sense, which is unique unless
$0<\gamma<1$.
Moreover, if $T_{\max}<\infty$, then
\[
\text{either}\quad
\limsup_{t\nearrow T_{\max}}
\|u(\cdot, t)\|_{C^0(\bar\Omega)}
=\infty,
\quad\text{or}\quad
\liminf_{t\nearrow T_{\max}}
\inf_{x\in\Omega}v(x,t)=0.
\]
\end{proposition}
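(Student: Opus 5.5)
The statement is Proposition \ref{pro4.1} (local existence). I would prove it by a Banach fixed point argument in a closed subset of $C^0(\bar\Omega\times[0,T])$, following the standard scheme for singular-sensitivity systems of Fujie–Winkler–Yokota \cite{ref49} and Kurt \cite{ref29}.

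\textbf{Setting up the fixed point.} Fix $R:=\|u_0\|_{L^\infty}+1$. The first preparatory step is a positive lower bound for $v$ that is uniform on the ball. For $\tilde u\ge0$ with $\|\tilde u\|_{L^\infty}\le R$, let $v$ solve $-\Delta v+v=\tilde u^\gamma$ with Neumann data. The Neumann heat kernel representation $v=\int_0^\infty e^{-s}e^{s\Delta}\tilde u^\gamma\,ds$ gives $v\ge\eta\int_\Omega\tilde u^\gamma$ for some $\eta=\eta(\Omega)>0$. So I restrict to the set
\[
X_T:=\Big\{\tilde u\in C^0(\bar\Omega\times[0,T]):\ 0\le\tilde u\le R,\ \int_\Omega\tilde u^\gamma(\cdot,t)\ge\delta\Big\},
\qquad \delta:=\tfrac12\int_\Omega u_0^\gamma>0 .
\]
On $X_T$ we then have $v\ge\eta\delta$, so $v^{-k}\le(\eta\delta)^{-k}$ and the sensitivity is no longer singular. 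Elliptic $W^{2,p}$ theory gives $\|\nabla v\|_{L^\infty}\le C(R)$.

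\textbf{The map $\Phi$.} Define $\Phi\tilde u:=u$ as the mild solution of
\[
u_t=\Delta u-\chi\nabla\cdot(\tilde u v^{-k}\nabla v)+\tilde u^\alpha\Big(r-\mu\int_\Omega\tilde u^\beta\Big),
\]
via the variation-of-constants formula. The standard estimate $\|e^{t\Delta}\nabla\cdot w\|_{L^\infty}\le C(1+t^{-1/2-N/(2p)})\|w\|_{L^p}$ with $p>N$ shows that, for small $T$:
\begin{itemize}
\item $\|u-e^{t\Delta}u_0\|_{L^\infty}\le C(R)\,T^{\theta}$ for some $\theta>0$;
\item $\int_\Omega u^\gamma$ stays above $\delta$ by continuity, since $e^{t\Delta}u_0\to u_0$ uniformly.
\end{itemize}

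\textbf{Main obstacle: the contraction step.} Here one needs Lipschitz dependence of $\tilde u\mapsto v^{-k}\nabla v$ and of $\tilde u\mapsto\tilde u^\alpha\int_\Omega\tilde u^\beta$.
\begin{itemize}
\item Since $\alpha,\beta\ge1$, the maps $s\mapsto s^\alpha,s^\beta$ are Lipschitz on $[0,R]$, so the reaction term is fine.
\item For $\gamma=1$, the map $\tilde u\mapsto v$ is linear and bounded from $L^\infty$ into $W^{1,\infty}$. Together with $v\ge\eta\delta$, this gives the Lipschitz bound, so $\Phi$ is a contraction for small $T$; a unique fixed point follows, and uniqueness of classical solutions follows by the same Gronwall-type estimate.
\item For $0<\gamma<1$, the map $s\mapsto s^\gamma$ is only Hölder continuous near $0$, which is exactly why uniqueness is not claimed. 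Here I would replace Banach by Schauder's theorem. Parabolic Hölder estimates make $\Phi(X_T)$ relatively compact in $C^0$, and continuity of $\Phi$ holds because $\tilde u\mapsto\tilde u^\gamma$ is continuous in $L^p$.
\end{itemize}

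\textbf{Regularity, positivity and the extension criterion.} Bootstrapping with parabolic Schauder theory yields $u\in C^{2,1}$, and elliptic theory gives $v\in C^{2,0}$. Nonnegativity follows from the comparison principle, since the reaction term is of the form $u\cdot(\text{bounded})$ when $\alpha\ge1$. Finally, the existence time $T$ depends only on an upper bound for $\|u(\cdot,t_0)\|_{L^\infty}$ and a lower bound for $\inf v(\cdot,t_0)$ (equivalently for $\int_\Omega u^\gamma$) at the restart time $t_0$. A standard continuation argument then gives the dichotomy in the statement.
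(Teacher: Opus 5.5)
Your overall route is the one the paper indicates. The paper only sketches the argument: a Banach fixed point for $\gamma=1$, a Schauder fixed point for $0<\gamma<1$ because $s\mapsto s^\gamma$ is not Lipschitz at $0$, positivity of $v$ from the Neumann lower estimate $v\ge\eta\int_\Omega\tilde u^\gamma$, then regularity and continuation. However, the invariance $\Phi(X_T)\subset X_T$ fails as you have set it up. Your $X_T$ imposes $\tilde u\ge0$, which you need both to give meaning to $\tilde u^\gamma$ and to have $v\ge\eta\int_\Omega\tilde u^\gamma$. But $\Phi\tilde u$ is the Duhamel solution of a problem in which $\tilde u$, not $u$, appears in $-\chi\nabla\cdot(\tilde u v^{-k}\nabla v)$ and in the reaction term. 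These are prescribed sources of either sign, so nothing forces $\Phi\tilde u\ge0$. For instance, if $u_0\equiv0$ on an open set $U$, then $e^{t\Delta}u_0$ is exponentially small at each point of $U$ as $t\to0$. Meanwhile, for $\tilde u\in X_T$ with a bump inside $U$, the divergence term has zero mean and is negative somewhere, and for suitable such $\tilde u$ it makes $\Phi\tilde u$ negative there. The comparison principle you invoke applies only to the fixed point, where $u$ itself multiplies the drift. It cannot be used to check that $\Phi$ maps $X_T$ into itself, and without $\Phi\tilde u\ge0$ the condition $\int_\Omega(\Phi\tilde u)^\gamma\ge\delta$ is not even meaningful.

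The gap is repairable, for example as follows. Drop the sign condition, let $v$ solve $-\Delta v+v=(\tilde u_+)^\gamma$, use $\tilde u_+$ in the reaction term, and replace your constraint by $\int_\Omega\tilde u(\cdot,t)\ge\frac12\int_\Omega u_0$ on $[0,T]$. This set is closed and convex, which Schauder needs; a lower bound on $\int_\Omega(\tilde u_+)^\gamma$ would not give a convex set. Since $0\le\tilde u_+\le R$, you still have $\int_\Omega(\tilde u_+)^\gamma\ge R^{\gamma-1}\int_\Omega\tilde u\ge\frac12R^{\gamma-1}\int_\Omega u_0$, hence a uniform positive lower bound for $v$. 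The mass constraint is preserved by the exact identity $\int_\Omega\Phi\tilde u(\cdot,t)=\int_\Omega u_0+\int_0^t\int_\Omega\tilde u_+^\alpha\bigl(r-\mu\int_\Omega\tilde u_+^\beta\bigr)\ge\int_\Omega u_0-\mu R^{\alpha+\beta}T$. The fixed point solves a problem with bounded coefficients in which $u$ multiplies the drift and the reaction vanishes where $u<0$. Testing with $u_-$ shows it is nonnegative, so the positive parts drop out. Alternatively, define $\Phi\tilde u$ through the linear problem in $u$, so the maximum principle gives $\Phi\tilde u\ge0$ directly.

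The mass constraint also fixes a second inaccuracy. Deriving $\int_\Omega u^\gamma\ge\delta$ from $e^{t\Delta}u_0\to u_0$ uniformly makes $T$ depend on the modulus of continuity of $u_0$, not only on $\|u_0\|_{L^\infty}$ and a lower bound for $\int_\Omega u_0^\gamma$ as your continuation step asserts. With the mass constraint, $T$ depends only on $R$ and $\int_\Omega u_0$. The latter is controlled at restart times via $\int_\Omega u\ge(\int_\Omega u^\gamma)^{1/\gamma}=(\int_\Omega v)^{1/\gamma}\ge(\inf_\Omega v)^{1/\gamma}$.
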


For $\gamma=1$, the proof is based on the Banach fixed-point argument and parabolic regularity,
as detailed in \cite[Proposition 3.1]{ref49} and \cite[Proposition 4]{ref34}.
For \(0<\gamma<1\), the mapping \(s\mapsto s^\gamma\) fails to be locally Lipschitz continuous at \(s=0\),
so the Banach fixed-point argument is not applicable and uniqueness is not guaranteed.
We instead employ a Schauder fixed-point theorem,
following the constructions in \cite[Lemma 3.1]{ref50}, \cite[Lemma 2.1]{ref52} and \cite[Lemma 5.1]{ref51}.
The lower estimate for the Neumann elliptic problem ensures that $v$ remains positive
on each local existence interval, so that the singular part $v^{-k}$ is well defined.
It follows from elliptic and parabolic regularity that the local existence and the extensibility alternative hold.
The details are omitted for brevity.

Next, we derive a uniform estimate for the total mass.
\begin{lemma}\label{le2.1}
Let $r,\mu>0$ and $\alpha\geq1$, and let the initial datum $u_{0}$ satisfy \eqref{1.4}.
If $\beta>1$, then
\[
\int_ {\Omega} u \leq \max\left\{\int_{\Omega}u_0, \left(\frac{r}{\mu}\right)^{\frac{1}{\beta}}\right\};
\]
if $\beta=1$, then
\[
\min\left\{\frac{r}{\mu}, \int_{\Omega} u_0\right\}
\le \int_{\Omega} u
\le \max\left\{\frac{r}{\mu}, \int_{\Omega} u_0\right\}
\]
for all $t \in (0, T _ {\max })$.
\end{lemma}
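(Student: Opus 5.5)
Integrating the first equation of \eqref{1.3} over $\Omega$ gives the mass identity. The diffusion and cross-diffusion terms vanish by the homogeneous Neumann conditions, since $\frac{\partial u}{\partial\nu}=\frac{\partial v}{\partial\nu}=0$ on $\partial\Omega$. Writing $m(t):=\int_\Omega u(\cdot,t)$, we therefore obtain
\[
m'(t)=\Bigl(\int_\Omega u^\alpha\Bigr)\Bigl(r-\mu\int_\Omega u^\beta\Bigr),\qquad t\in(0,T_{\max}).
\]
The remaining work is to compare the nonlocal factor with $m$ using Jensen's (or Hölder's) inequality, recalling $|\Omega|=1$, and then to run an ODE comparison argument.

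\emph{Case $\beta>1$.} Jensen gives $\int_\Omega u^\beta\ge m^\beta$. Hence, whenever $m(t)>(r/\mu)^{1/\beta}$, we have $r-\mu\int_\Omega u^\beta<0$, and since $\int_\Omega u^\alpha\ge0$, this yields $m'(t)\le0$. Set $M:=\max\{m(0),(r/\mu)^{1/\beta}\}$ and suppose $m(t_1)>M$ for some $t_1$. Let $t_0:=\sup\{t<t_1: m(t)\le M\}$, which exists by continuity of $m$ on $[0,T_{\max})$ (continuity at $t=0$ follows from $u\in C(\bar\Omega\times[0,T_{\max}))$). Then $m>M$ on $(t_0,t_1]$ and $m(t_0)=M$, so $m$ is nonincreasing there, contradicting $m(t_1)>M$.

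\emph{Case $\beta=1$.} Now $\int_\Omega u^\beta=m$ exactly, so
\[
m'=\Bigl(\int_\Omega u^\alpha\Bigr)(r-\mu m),
\]
and the sign of $m'$ is determined by $r-\mu m$. The same barrier argument shows $m$ cannot exceed $\max\{r/\mu,m(0)\}$: above $r/\mu$ it is nonincreasing. Symmetrically, $m$ cannot drop below $\min\{r/\mu,m(0)\}$: below $r/\mu$ it is nondecreasing.

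The only delicate point is the rigor of the sign/barrier argument, in particular handling $t=0$ and strictness, since $\int_\Omega u^\alpha$ could vanish. The "$\sup$ of last time below the barrier" argument sidesteps this: it only needs $m'\le0$ (resp. $\ge0$) on the interval, not strict monotonicity. An alternative for $\beta=1$ is to write $w:=m-r/\mu$, which satisfies $w'=-\mu\bigl(\int_\Omega u^\alpha\bigr)w$. Then
\[
w(t)=w(0)\exp\Bigl(-\mu\int_0^t\int_\Omega u^\alpha\Bigr),
\]
so $w$ keeps its sign and $|w|$ is nonincreasing. This gives both bounds at once.
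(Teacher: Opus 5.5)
Your proposal is correct and follows the paper's route for $\beta>1$. Both integrate the equation, use H\"older/Jensen with $|\Omega|=1$ to get $m'\le\bigl(\int_\Omega u^\alpha\bigr)\bigl(r-\mu m^\beta\bigr)$, and close with an ODE comparison, which your ``last time below the barrier'' argument makes rigorous. For $\beta=1$ the paper only cites an external lemma, whereas your formula $w(t)=w(0)\exp\bigl(-\mu\int_0^t\int_\Omega u^\alpha\bigr)$ for $w=m-r/\mu$ gives a self-contained proof of both bounds.
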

{\bf Proof.} We first consider the case \(\beta>1\). Integrating the first equation in \eqref{1.3} over \(\Omega\)
and using H\"older's inequality, we obtain
\[
\frac {d}{d t} \int_ {\Omega} u = \int_ {\Omega}u^\alpha\left(r-\mu\int_{\Omega}u^\beta\right)
\leq \int_ {\Omega} u ^ {\alpha} \left[ r -  \mu \left(\int_ {\Omega} u\right) ^ {\beta} \right]
\]
for all $t \in (0, T _ {\max })$.
A comparison argument for scalar ODEs yields the upper bound.
For $\beta=1$, the conclusion follows directly from \cite[Lemma 6]{ref46}.
$\hfill\Box$

\begin{lemma}\label{le2.2}
Let $\gamma>0$. For every $p\geq2$, it holds that
\[
\frac{1}{2}\int_\Omega
u^p\frac{|\nabla v|^2}{v^2}
+\int_\Omega\frac{u^{p+\gamma}}{v}
\leq
2\int_\Omega|\nabla u^{\frac p2}|^2
+\int_\Omega u^p
\]
for all $t \in (0, T _ {\max })$.
\end{lemma}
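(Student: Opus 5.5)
Test the elliptic equation $0=\Delta v-v+u^\gamma$ with $u^p/v$, which is admissible since $v>0$ on $\bar\Omega\times(0,T_{\max})$ by Proposition \ref{pro4.1}. Integrating by parts with the Neumann condition, $\int_\Omega \Delta v\cdot\frac{u^p}{v}=-\int_\Omega\nabla v\cdot\nabla\big(\frac{u^p}{v}\big)$. Expanding
\[
\nabla\Big(\frac{u^p}{v}\Big)=\frac{\nabla u^p}{v}-\frac{u^p\nabla v}{v^2},
\]
the tested equation becomes
\[
0=-\int_\Omega \frac{\nabla u^p\cdot\nabla v}{v}+\int_\Omega u^p\frac{|\nabla v|^2}{v^2}-\int_\Omega u^p+\int_\Omega\frac{u^{p+\gamma}}{v}.
\]
Hence
\[
\int_\Omega u^p\frac{|\nabla v|^2}{v^2}+\int_\Omega\frac{u^{p+\gamma}}{v}=\int_\Omega u^p+\int_\Omega\frac{\nabla u^p\cdot\nabla v}{v}.
\]

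It remains to control the mixed term. Write $\nabla u^p=2u^{p/2}\nabla u^{p/2}$, so that
\[
\int_\Omega\frac{\nabla u^p\cdot\nabla v}{v}=2\int_\Omega \nabla u^{\frac p2}\cdot u^{\frac p2}\frac{\nabla v}{v}.
\]
By Young's inequality this is at most
\[
2\int_\Omega|\nabla u^{\frac p2}|^2+\frac12\int_\Omega u^p\frac{|\nabla v|^2}{v^2}.
\]
Absorbing the last term into the left-hand side gives exactly the claimed inequality, with the constants $\frac12$ and $2$. The argument uses only the elliptic equation, so it holds pointwise in $t$ for all $t\in(0,T_{\max})$. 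The restriction $p\ge2$ is not essential here; it merely matches later use.

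The only delicate point is justifying the integrations by parts. One needs $u^p/v\in C^1(\bar\Omega)$ for each fixed $t$, which follows from $v\ge\min v(\cdot,t)>0$ and the regularity in Proposition \ref{pro4.1}. One also needs $u^{p/2}$ to be differentiable where $u$ may vanish. Since $p\ge2$, the function $u^{p/2}$ is $C^1$ for $u\in C^1$, so no regularization is needed. If desired, one can instead work with $(u+\varepsilon)^p$ and let $\varepsilon\to0$ by monotone/dominated convergence.
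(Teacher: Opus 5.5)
Your proof is correct and follows the paper's argument exactly: test $0=\Delta v-v+u^\gamma$ with $u^p/v$, integrate by parts, and bound the mixed term $\int_\Omega \frac{\nabla u^p\cdot\nabla v}{v}$ by Young's inequality as $2\int_\Omega|\nabla u^{\frac p2}|^2+\frac12\int_\Omega u^p\frac{|\nabla v|^2}{v^2}$ before absorbing. Your remarks on the admissibility of the test function ($v>0$ and $u^{p/2}\in C^1$ for $p\ge2$) justify a step the paper leaves implicit.
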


{\bf Proof.}
Testing the second equation in \eqref{1.3} with
$\frac{u^p}{v}$ and integrating by parts over $\Omega$, we obtain
\begin{align*}
0
&=
\int_\Omega\frac{u^p}{v}
\left(\Delta v-v+u^\gamma\right)\\
&=
-p\int_\Omega
\frac{u^{p-1}}{v}\nabla u\cdot\nabla v
+\int_\Omega
u^p\frac{|\nabla v|^2}{v^2}
-\int_\Omega u^p
+\int_\Omega\frac{u^{p+\gamma}}{v}.
\end{align*}
Hence, by Young's inequality, we have
\begin{align*}
\int_\Omega u^p\frac{|\nabla v|^2}{v^2} +\int_\Omega\frac{u^{p+\gamma}}{v}
&=p\int_\Omega\frac{u^{p-1}}{v}\nabla u\cdot\nabla v+\int_\Omega u^p\\
&\leq 2\int_\Omega|\nabla u^{\frac p2}|^2+\frac12\int_\Omega u^p\frac{|\nabla v|^2}{v^2}
+\int_\Omega u^p.
\end{align*}
The conclusion is therefore obtained.
\(\hfill\Box\)

\begin{lemma}\label{le2.5}
For \(N\geq3\), let \(q\) be defined by \eqref{2.0} and choose
\(1\leq l<s<q\) such that $\frac{s}{l}<\frac{2}{l}+1-\frac{2}{q}$.
For any \(C_{N_1},C_{N_2}>0\) and
\(\omega\in H^1(\Omega)\cap L^l(\Omega)\), one has
$$
\|\omega\|_{L^s(\Omega)}^s\leq C(N)\left( C_{N_1}^{-\frac{\eta s}{2-\eta s}} + C_{N_2}^{-\frac{\eta s}{2-\eta s}}\right)
\|\omega\|_{L^l(\Omega)}^\theta + C_{N_1} \|\nabla \omega\|_{L^2(\Omega)}^2 + C_{N_2} \|\omega\|_{L^2(\Omega)}^2,
$$
where \(C(N)>0\) depends only on \(N\), and
\[
\eta=
\frac{\frac1l-\frac1s}{\frac1l+\frac1N-\frac12}\in(0,1),
\qquad\theta=\frac{2(1-\eta)s}{2-\eta s}.
\]
For \(N=1\) and \(N=2\), the conclusion holds if \(1\leq l<s\) and \(\frac{s}{l}<\frac{2}{l}+2\),
and if \(1\leq l<s\) and \(\frac{s}{l}<\frac{2}{l}+1\), respectively.
\end{lemma}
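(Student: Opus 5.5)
The statement is Lemma~\ref{le2.5}, an interpolation inequality of Gagliardo--Nirenberg type with an explicit splitting constant. The plan is to combine a Gagliardo--Nirenberg bound with two applications of Young's inequality.

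\textbf{Step 1 (the $H^1$ interpolation, $N\ge3$).} Since $l<s<q$, H\"older's inequality with the interpolation exponent $\eta$ defined by
\[
\frac1s=\frac{\eta}{q}+\frac{1-\eta}{l}
\]
gives
\[
\|\omega\|_{L^s}\le \|\omega\|_{L^q}^{\eta}\|\omega\|_{L^l}^{1-\eta}.
\]
Solving, $\eta=\frac{1/l-1/s}{1/l-1/q}$, and $1/q=\frac12-\frac1N$, which is exactly the stated formula. Clearly $\eta\in(0,1)$ because $l<s<q$. By the Sobolev embedding $H^1(\Omega)\hookrightarrow L^q(\Omega)$ on the smooth bounded domain,
\[
\|\omega\|_{L^q}\le C(N)\bigl(\|\nabla\omega\|_{L^2}+\|\omega\|_{L^2}\bigr).
\]
Hence
\[
\|\omega\|_{L^s}^s\le C(N)\bigl(\|\nabla\omega\|_{L^2}+\|\omega\|_{L^2}\bigr)^{\eta s}\|\omega\|_{L^l}^{(1-\eta)s}
\le C(N)\bigl(\|\nabla\omega\|_{L^2}^{\eta s}+\|\omega\|_{L^2}^{\eta s}\bigr)\|\omega\|_{L^l}^{(1-\eta)s}.
\]

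\textbf{Step 2 (checking $\eta s<2$).} This is the main point. We have $\eta s<2$ if and only if
\[
s\Bigl(\frac1l-\frac1s\Bigr)<2\Bigl(\frac1l-\frac1q\Bigr),
\quad\text{i.e.}\quad
\frac sl-1<\frac2l-\frac2q,
\quad\text{i.e.}\quad
\frac sl<\frac2l+1-\frac2q,
\]
which is precisely the hypothesis.

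\textbf{Step 3 (Young's inequality).} Apply Young's inequality to each product with exponents $\frac{2}{\eta s}$ and $\frac{2}{2-\eta s}$, weighted so that the gradient term carries the coefficient $C_{N_1}$:
\[
C(N)\|\nabla\omega\|_{L^2}^{\eta s}\|\omega\|_{L^l}^{(1-\eta)s}\le C_{N_1}\|\nabla\omega\|_{L^2}^2+C(N)\,C_{N_1}^{-\frac{\eta s}{2-\eta s}}\|\omega\|_{L^l}^{\frac{2(1-\eta)s}{2-\eta s}}.
\]
Here $C(N)$ is enlarged as needed. The exponent $\frac{2(1-\eta)s}{2-\eta s}$ is exactly $\theta$. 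Treating the $\|\omega\|_{L^2}$ term in the same way with $C_{N_2}$ yields the claimed inequality.

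\textbf{Step 4 (the cases $N=1,2$).} For $N=1,2$ there is no finite critical exponent $q$, so we replace the Sobolev step by the Gagliardo--Nirenberg inequality
\[
\|\omega\|_{L^s}\le C\|\omega\|_{W^{1,2}}^{\eta}\|\omega\|_{L^l}^{1-\eta},
\]
with $\eta=\frac{1/l-1/s}{1/l+1/N-1/2}$, the same formula. This holds for all $s>l$ when $N=2$, and for $s\le\infty$ when $N=1$.

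Steps 2 and 3 then go through unchanged. The condition $\eta s<2$ becomes
\[
s\Bigl(\frac1l-\frac1s\Bigr)<2\Bigl(\frac1l+\frac1N-\frac12\Bigr),
\quad\text{i.e.}\quad
\frac sl<\frac2l+\frac2N.
\]
This gives $\frac sl<\frac2l+2$ for $N=1$ and $\frac sl<\frac2l+1$ for $N=2$, matching the stated conditions. We must also confirm $\eta<1$, which reduces to $-\frac1s<\frac1N-\frac12$; this is automatic for $N=1,2$.

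No step is a genuine obstacle beyond verifying the exponent condition in Step 2 and its analogue in Step 4.
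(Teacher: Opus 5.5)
Your argument is correct and follows essentially the paper's route: for $N\ge3$ the paper just cites Lemma~3.1 of Chiyo et al., which is the H\"older--Sobolev--Young argument you reconstruct, and for $N=1,2$ it only records $\eta=\frac{1/l-1/s}{1/l+1/2}$, resp.\ $\eta=1-\frac{l}{s}$, and notes that $\eta s<2$ is equivalent to the stated condition, exactly as in your Step~4. The one point to state more carefully is the constant: Young's inequality produces a factor of the form $C_1^{2/(2-\eta s)}$, with $C_1$ the domain-dependent constant from your Step~1, so ``enlarging $C(N)$'' gives a constant depending on $\Omega$ and on how close $\eta s$ is to $2$, rather than on $N$ alone. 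This imprecision is already present in the statement, and it is harmless in the paper's Moser iteration, where $\eta s=2/R_j$ stays bounded away from $2$.
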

{\bf Proof.}
The estimate follows from \cite[Lemma 3.1]{ref36}. For \(N=1\),
the above formula gives $\eta=\frac{\frac1l-\frac1s}{\frac1l+\frac12}$,
whereas for \(N=2\), it gives $\eta=1-\frac{l}{s}$.
In both cases, \(\eta s<2\) follows from the stated condition.
\(\hfill\Box\)

\begin{lemma}(\cite{ref47})\label{le2.6}
Let $1 \le r_1 \le r_0 \le r_2 \le \infty$ satisfy
$\frac{1}{r_0}=\frac{\sigma}{r_1}+\frac{1-\sigma}{r_2}$.
If \(u\in L^{r_1}(\Omega)\cap L^{r_2}(\Omega)\), then
\(u\in L^{r_0}(\Omega)\) and
$$
\|u\|_{L^{r_0}(\Omega)} \le \|u\|_{L^{r_1}(\Omega)}^{\sigma} \|u\|_{L^{r_2}(\Omega)}^{1-\sigma}.
$$
\end{lemma}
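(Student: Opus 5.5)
The last statement is Lemma \ref{le2.6}, the classical interpolation (Hölder-type) inequality between Lebesgue norms. I would prove it directly from Hölder's inequality, splitting the integrand $|u|^{r_0}$ into two factors adapted to the exponents $r_1$ and $r_2$.

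\textbf{Degenerate cases.} If $r_1=r_0$, take $\sigma=1$; if $r_0=r_2$, take $\sigma=0$. In both cases the estimate is trivial. The case $r_1=r_2$ is covered by these as well.

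\textbf{Main case $r_1<r_0<r_2$ with $r_2<\infty$.} Here $\sigma\in(0,1)$ is forced by the relation
$\frac{1}{r_0}=\frac{\sigma}{r_1}+\frac{1-\sigma}{r_2}$. I would write
\[
|u|^{r_0}=|u|^{\sigma r_0}\,|u|^{(1-\sigma)r_0}
\]
and apply Hölder's inequality with the conjugate exponents
\[
P=\frac{r_1}{\sigma r_0},\qquad Q=\frac{r_2}{(1-\sigma)r_0}.
\]
These satisfy $\frac1P+\frac1Q=r_0\left(\frac{\sigma}{r_1}+\frac{1-\sigma}{r_2}\right)=1$, which is exactly the defining relation for $\sigma$. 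This gives
\[
\int_\Omega |u|^{r_0}\le\Big(\int_\Omega|u|^{r_1}\Big)^{\frac{\sigma r_0}{r_1}}\Big(\int_\Omega|u|^{r_2}\Big)^{\frac{(1-\sigma) r_0}{r_2}}.
\]
Taking the $r_0$-th root yields $\|u\|_{L^{r_0}(\Omega)}\le\|u\|_{L^{r_1}(\Omega)}^{\sigma}\|u\|_{L^{r_2}(\Omega)}^{1-\sigma}$. In particular the left-hand side is finite, so $u\in L^{r_0}(\Omega)$.

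\textbf{Case $r_2=\infty$.} Now $\sigma=r_1/r_0$. I would bound pointwise
\[
|u|^{r_0}\le\|u\|_{L^\infty(\Omega)}^{r_0-r_1}|u|^{r_1},
\]
integrate over $\Omega$, and take the $r_0$-th root. This gives the claim with exponent $1-\sigma=1-r_1/r_0$ on the $L^\infty$ factor.

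There is no real obstacle. The only point needing care is checking that $P,Q\ge1$ are genuinely conjugate. This follows from $\sigma\in[0,1]$ together with the defining relation, and the boundary values $\sigma\in\{0,1\}$ are exactly the degenerate cases already handled.
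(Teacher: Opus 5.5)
Your proof is correct. Splitting $|u|^{r_0}=|u|^{\sigma r_0}|u|^{(1-\sigma)r_0}$ and applying H\"older's inequality with exponents $r_1/(\sigma r_0)$ and $r_2/((1-\sigma)r_0)$, together with the pointwise bound when $r_2=\infty$, is the standard proof of this interpolation inequality. The paper does not reprove it and simply quotes it from \cite{ref47}. One cosmetic point: in your degenerate cases $\sigma$ is not chosen but forced by the defining relation, and when $r_1=r_2$ the relation leaves $\sigma$ undetermined while the inequality holds with equality.
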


\begin{lemma}(Lemma 2.3 in \cite{ref48})\label{le2.7}
Let $r\geq1$, $0<s_0\leq m_0\leq\infty$ and $z>0$ satisfy $\frac{1}{r} \le \frac{1}{N} + \frac{1}{m_0}$.
There exists $C_{GN}>0$ such that every $u\in W^{1,r}(\Omega)\cap L^{s_0}(\Omega)$ satisfies
$$
\|u\|_{L^{m_0}(\Omega)} \le C_{GN}\left( \|\nabla u\|_{L^r(\Omega)}^a \|u\|_{L^{s_0}(\Omega)}^{1-a} + \|u\|_{L^z(\Omega)} \right),
$$
where $a = \frac{\frac{1}{s_0} - \frac{1}{m_0}}{\frac{1}{s_0} + \frac{1}{N} - \frac{1}{r}}$.
\end{lemma}

\begin{lemma}(Lemma 7 in \cite{ref46})\label{le2.8}
Let $a_0>1$, $A,B>0$, and let $y\in C^1((0,\infty))$ be nonnegative and satisfy
$$
y'(t) \le A - B y(t)^{a_0}.
$$
For every $t>0$, one has
$$
y(t) \le \left( \frac{A}{B} \right)^{1/{a_0}} + \left[ \frac{1}{B({a_0}-1)t} \right]^{\frac{1}{{a_0}-1}}.
$$
Furthermore, if $ y(0) $ is bounded, then
$$
y(t) \le \max\left( y(0), \left( \frac{A}{B} \right)^{1/{a_0}} \right)
$$
holds for all $t>0$.
\end{lemma}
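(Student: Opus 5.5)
The plan is a comparison argument against an explicit supersolution. The lemma is purely an ODE statement and uses nothing else from the paper. Set
\[
K:=\left(\frac{A}{B}\right)^{1/a_0},\qquad \varphi(t):=\left[\frac{1}{B(a_0-1)t}\right]^{\frac{1}{a_0-1}},\quad t>0 .
\]
A direct computation shows $\varphi'=-B\varphi^{a_0}$ on $(0,\infty)$ and $\varphi(t)\to\infty$ as $t\searrow0$. Because $y$ is only assumed to be $C^1$ on the open interval $(0,\infty)$, I will compare on shifted intervals. For $\varepsilon>0$ let $w_\varepsilon(t):=K+\varphi(t-\varepsilon)$ for $t>\varepsilon$. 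Two things need checking: that $w_\varepsilon$ is a supersolution of the inequality satisfied by $y$, and that $y$ lies below $w_\varepsilon$ near $t=\varepsilon$.

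\emph{Supersolution property.} We need $w_\varepsilon'\ge A-Bw_\varepsilon^{a_0}$. Since $w_\varepsilon'=-B\varphi^{a_0}$ and $A=BK^{a_0}$, this is equivalent to
\[
(K+\varphi)^{a_0}\ge K^{a_0}+\varphi^{a_0}.
\]
This is superadditivity of $s\mapsto s^{a_0}$ on $[0,\infty)$ for $a_0>1$. One way to see it: write $s^{a_0}=s\cdot s^{a_0-1}$ and use monotonicity of $s^{a_0-1}$, or use convexity together with the value $0$ at $0$.

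\emph{Comparison.} Put $d:=y-w_\varepsilon$ on $(\varepsilon,\infty)$. Since $y$ is continuous on $[\varepsilon,\varepsilon+1]$ and $w_\varepsilon\to\infty$ as $t\searrow\varepsilon$, we have $d<0$ near $\varepsilon$. Suppose $d(t_2)>0$ for some $t_2$, and let $t_1<t_2$ be the last zero of $d$ before $t_2$. On $(t_1,t_2)$ we have $y>w_\varepsilon\ge0$, so
\[
d'\le A-By^{a_0}-\bigl(A-Bw_\varepsilon^{a_0}\bigr)=-B\bigl(y^{a_0}-w_\varepsilon^{a_0}\bigr)<0 .
\]
Hence $d$ is decreasing on $(t_1,t_2)$, which gives $d(t_2)<d(t_1)=0$, a contradiction. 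Therefore $y\le K+\varphi(t-\varepsilon)$ for all $t>\varepsilon$. Letting $\varepsilon\searrow0$ and using continuity of $\varphi$ yields the first estimate.

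\emph{Second estimate.} Assume $y$ extends continuously to $t=0$ with finite value $y(0)$, and set $M:=\max\{y(0),K\}$. The constant $M$ is a supersolution, since $A-BM^{a_0}\le0$. I run the same last-crossing argument with $d:=y-M$. Wherever $y>M\ge K$ we have $y'\le A-By^{a_0}<0$. So if $y(t_2)>M$, we take the last time $t_1\in[0,t_2)$ with $y(t_1)=M$; such a time exists by continuity at $0$, because $y(0)\le M$. On $(t_1,t_2)$ the function $y$ is strictly decreasing, which contradicts $y(t_2)>y(t_1)$.

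The only point needing care is the behaviour near $t=0$ in the first estimate, where $y$ may be unbounded. The $\varepsilon$-shift handles this, because the supersolution $w_\varepsilon$ blows up at $t=\varepsilon$ while $y$ is finite there. The rest is the elementary superadditivity inequality.
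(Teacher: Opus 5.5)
Your proof is correct. The paper gives no argument of its own for this lemma; it imports it verbatim as Lemma 7 of \cite{ref46}. Your comparison proof is therefore a self-contained replacement for a citation, not an alternative to an in-paper proof.

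All the steps hold up:
\begin{itemize}
\item the identity $\varphi'=-B\varphi^{a_0}$;
\item the reduction of the supersolution property of $K+\varphi(t-\varepsilon)$ to $(K+\varphi)^{a_0}\ge K^{a_0}+\varphi^{a_0}$;
\item the $\varepsilon$-shift, which correctly handles $y$ being only $C^1$ on the open interval and possibly unbounded as $t\searrow0$;
\item both last-crossing arguments, which close via the mean value theorem on $[t_1,t_2]$, using continuity at $0$ when $t_1=0$ in the second part.
\end{itemize}

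The same idea can be phrased as integrating a Bernoulli inequality. Superadditivity gives $z'\le -Bz^{a_0}$ for $z:=y-K$ wherever $z>0$, hence $(z^{1-a_0})'\ge B(a_0-1)$ there, and integrating from near $0$ gives $z(t)\le\varphi(t)$. Your supersolution is exactly the extremal solution of that inequality. Your formulation has the advantage of making the behaviour at $t=0$ transparent.

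Two points are worth stating explicitly, because of how the paper uses the lemma in Section 3 with $y(t)=\int_\Omega u^\beta$. First, your argument never uses that the time interval is unbounded. It therefore applies verbatim on $(0,T_{\max})$ before $T_{\max}=\infty$ is known, whereas the lemma as stated assumes $y\in C^1((0,\infty))$. Second, your reading of ``$y(0)$ is bounded'' as ``$y$ extends continuously to $t=0$ with a finite value'' is the one needed there, with $y(0)=\int_\Omega u_0^\beta$.
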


\begin{lemma}(Lemma 4.1 in \cite{ref45})\label{le2.9}
Let $y_j\in C^1((0,\infty))$, $j=0,1,2,\dots$, be nonnegative functions satisfying
$$
y_j'(t) \le -y_j + m_j \left( y_{j-1}^{\vartheta_1}(t) + y_{j-1}^{\vartheta_2}(t) \right),
$$
where $m_j=\tilde{m}h^{bj}>1$, $\tilde{m},b,h$ are positive and bounded constants, and $0\leq\vartheta_2<\vartheta_1\leq h$.
Suppose further that there exists a bounded constant $\tilde{M}\geq1$ such that $y_j(0)\leq\tilde{M}^{h^j}$ for all $j\geq0$. Then
$$
y_j(t) \le (2\tilde{m})^{\frac{h^j - 1}{h - 1}} h^{b \left( \frac{h(h^j - 1)}{(h - 1)^2} - \frac{j}{h - 1} \right)}
\max\left\{ \sup_{t \ge 0} y_0^{h^j}(t), \tilde{M}^{h^j} \right\}.
$$
\end{lemma}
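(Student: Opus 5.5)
We prove the estimate by induction on $j$, using an ODE comparison at each step and a running bound that already incorporates the initial data. Throughout we assume $h>1$, as implicitly required by the factor $\frac{1}{h-1}$ in the conclusion. Set
\[
K_j:=\max\Big\{\sup_{t\ge0}y_j(t),\ \tilde M^{h^j}\Big\},\qquad j\ge0 .
\]
If $\sup_{t\ge0}y_0=\infty$, the right-hand side of the claimed bound is infinite and there is nothing to prove. So we assume $K_0<\infty$. We then show inductively that $K_j<\infty$ and
\begin{equation*}
K_j\le 2m_j K_{j-1}^{h}\qquad (j\ge1).
\end{equation*}

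\emph{Step 1: one comparison step.} Since $\tilde M\ge1$, every $K_{j-1}\ge1$. Because $0\le\vartheta_2<\vartheta_1\le h$, this gives $y_{j-1}^{\vartheta_i}(t)\le K_{j-1}^{\vartheta_i}\le K_{j-1}^{h}$ for $i=1,2$. The differential inequality therefore becomes
\[
y_j'\le -y_j+L_j,\qquad L_j:=2m_jK_{j-1}^h .
\]
By the standard comparison argument, $y_j(t)\le \max\{y_j(0),L_j\}$ for all $t\ge0$. Indeed, $e^{t}(y_j-L_j)$ is nonincreasing; equivalently, $y_j'<0$ wherever $y_j>L_j$. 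Now $y_j(0)\le \tilde M^{h^j}=(\tilde M^{h^{j-1}})^h\le K_{j-1}^h\le L_j$, using $2m_j>1$. Hence $\sup_t y_j\le L_j$. Since also $\tilde M^{h^j}\le L_j$, we conclude $K_j\le L_j<\infty$.

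\emph{Step 2: unrolling the recursion.} Iterating $K_j\le 2\tilde m h^{bj}K_{j-1}^h$ gives
\[
K_j\le \prod_{i=1}^{j}\big(2\tilde m\,h^{bi}\big)^{h^{j-i}}\,K_0^{h^j}
=(2\tilde m)^{\sum_{i=1}^j h^{j-i}}\,h^{\,b\sum_{i=1}^j i\,h^{j-i}}\,K_0^{h^j}.
\]
The first geometric sum is $\sum_{i=1}^j h^{j-i}=\frac{h^j-1}{h-1}$. The second is obtained by differentiating the geometric series:
\[
\sum_{i=1}^j i\,h^{j-i}=\frac{h(h^j-1)}{(h-1)^2}-\frac{j}{h-1}.
\]
As a check, for $j=1$ the right-hand side equals $\frac{h(h-1)}{(h-1)^2}-\frac{1}{h-1}=\frac{h-1}{h-1}=1$, as it should. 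The general case follows by induction using $S_{j+1}=hS_j+(j+1)$, where $S_j$ denotes the sum. Finally, because $s\mapsto s^{h^j}$ is increasing on $[0,\infty)$,
\[
K_0^{h^j}=\max\Big\{\sup_{t\ge0}y_0^{h^j}(t),\ \tilde M^{h^j}\Big\}.
\]
Since $y_j(t)\le K_j$, this yields exactly the claimed inequality.

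\emph{Main subtlety.} The delicate point is Step 1. There we need $K_{j-1}\ge1$ in order to replace both powers $\vartheta_1,\vartheta_2$ by the single power $h$. This is precisely where the hypotheses $\tilde M\ge1$ and $\vartheta_i\le h$ enter. We also need the comparison to absorb the initial value into $L_j$, which uses $y_j(0)\le\tilde M^{h^j}$ together with $2m_j>1$. Everything else is bookkeeping of the exponents.
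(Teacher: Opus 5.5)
Your proof is correct and is essentially the standard argument behind the cited Lemma 4.1; the paper gives no proof of its own, only the citation and the remark that it remains valid for $\vartheta_2=0$. The argument is an ODE comparison giving $\sup_t y_j\le 2m_j\max\{1,\sup_t y_{j-1}\}^h$ once $y_j(0)\le\tilde M^{h^j}$ is absorbed, followed by unrolling the recursion with the two sums $\sum h^{j-i}$ and $\sum i\,h^{j-i}$. Your Step 1 covers the case $\vartheta_2=0$ explicitly via $y_{j-1}^{0}=1\le K_{j-1}^h$. The only tacit assumption, which the statement also makes, is that $y_j$ is continuous at $t=0$ so the comparison can start there.
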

{\bf Proof.} The proof of \cite[Lemma 4.1]{ref45} remains valid for \(\vartheta_2=0\).

\section{$L^p$-Boundedness}\label{S3}
In this section, we prove the uniform $L^p(\Omega)$-bounds stated in Theorem \ref{th3.2}.
We first derive several basic estimates for \(v\).

\begin{lemma}\label{le2.3}
Let $\gamma=1$, $\rho\geq 3$ and \(2< \kappa < 2 \rho-2\). Then we have
\[
\int_ {\Omega} \frac {| \nabla v | ^ {2 \rho}}{v ^ {\kappa}} \leq \tilde{C} \int_ {\Omega}
\frac {u ^ {\rho}}{v ^ {\kappa - \rho}} + C \int_ {\Omega} v ^ {2 \rho - \kappa}
\]
for all \(t\in(0,T_{\max})\), where $\tilde{C}$ is a positive constant and
\[
\tilde{C} := \left(\frac {4  (\rho - 1) ^ {2}}{2 \rho - \kappa - 2}\right) ^ {\rho}
\cdot \left(\frac {2 (\kappa - 1)}{\kappa - 2}\right) ^ {\frac {\rho}{2}}.
\]
\end{lemma}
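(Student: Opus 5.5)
We will derive the estimate by testing the elliptic equation $0=\Delta v-v+u$ (here $\gamma=1$) against $v^{-\kappa}|\nabla v|^{2\rho-2}$ and integrating by parts. The starting point is the pointwise identity $\nabla v\cdot\nabla\Delta v=\frac12\Delta|\nabla v|^2-|D^2v|^2$, combined with $|\Delta v|^2\le N|D^2v|^2$ where needed. Since the constant $\tilde C$ is independent of $N$, we will instead work only with the weighted identity for $\int v^{-\kappa}|\nabla v|^{2\rho-2}\Delta v$, which avoids second derivatives in a dimension-dependent way.

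\textbf{Step 1: the identity.} We integrate by parts,
\[
\int_\Omega \frac{|\nabla v|^{2\rho-2}}{v^{\kappa}}\Delta v
=\kappa\int_\Omega\frac{|\nabla v|^{2\rho}}{v^{\kappa+1}}-(\rho-1)\int_\Omega\frac{|\nabla v|^{2\rho-4}}{v^{\kappa}}\nabla|\nabla v|^2\cdot\nabla v
+\int_{\partial\Omega}\cdots,
\]
where the boundary term vanishes because $\partial_\nu v=0$. We then substitute $\Delta v=v-u$ on the left. This yields
\[
\kappa\int_\Omega\frac{|\nabla v|^{2\rho}}{v^{\kappa+1}}+\int_\Omega\frac{u|\nabla v|^{2\rho-2}}{v^\kappa}
=\int_\Omega\frac{|\nabla v|^{2\rho-2}}{v^{\kappa-1}}+(\rho-1)\int_\Omega\frac{|\nabla v|^{2\rho-4}}{v^{\kappa}}\nabla|\nabla v|^2\cdot\nabla v .
\]
The exponent bookkeeping is off by one here: to land on $\int|\nabla v|^{2\rho}v^{-\kappa}$ we should test with $v^{1-\kappa}|\nabla v|^{2\rho-2}$, and we will adjust accordingly.

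\textbf{Step 1$'$: the cleaner route.} A simpler alternative avoids the Hessian entirely. Write
\[
\frac{|\nabla v|^{2\rho}}{v^\kappa}=\nabla v\cdot\Bigl(\frac{|\nabla v|^{2\rho-2}\nabla v}{v^{\kappa}}\Bigr),
\]
and note that $\nabla v\, v^{-\kappa}=\frac{1}{1-\kappa}\nabla v^{1-\kappa}$ requires $\kappa\ne1$. Since $\kappa>2$, we instead use $\nabla(v^{1-\kappa})=(1-\kappa)v^{-\kappa}\nabla v$ and integrate by parts against $|\nabla v|^{2\rho-2}\nabla v$. This produces $\int v^{1-\kappa}\nabla\cdot(|\nabla v|^{2\rho-2}\nabla v)$, which splits into a $\Delta v$ term and a term $(2\rho-2)|\nabla v|^{2\rho-4}D^2v(\nabla v,\nabla v)$. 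The Hessian term is the obstruction, and it is exactly what produces the two factors in $\tilde C$. We handle it as follows.

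(a) Integrate by parts once more, or equivalently test with $|\nabla v|^{2\rho-2}v^{1-\kappa}$ using $\nabla v\cdot\nabla(|\nabla v|^{2\rho-2})$, and combine it with $\Delta v=v-u$.

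(b) Apply Young's inequality. The factor $\frac{2(\kappa-1)}{\kappa-2}$ arises when absorbing a cross term $\int v^{-\kappa+1}|\nabla v|^{2\rho-2}|D^2v|$ against $\int v^{-\kappa}|\nabla v|^{2\rho}$, via a preliminary estimate
\[
\int_\Omega\frac{|\nabla v|^{2\rho-2}|D^2 v|^2}{v^{\kappa-2}}\ \lesssim\ \frac{2(\kappa-1)}{\kappa-2}\Bigl(\int u^2(\cdots)+\int v^2(\cdots)\Bigr),
\]
obtained from the Bochner identity.

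(c) Use Hölder with exponents $\rho$ and $\rho/(\rho-1)$ on $\int u|\nabla v|^{2\rho-2}v^{1-\kappa}$. This gives
\[
\int_\Omega u|\nabla v|^{2\rho-2}v^{1-\kappa}\le\Bigl(\int_\Omega u^\rho v^{\rho-\kappa}\Bigr)^{1/\rho}\Bigl(\int_\Omega|\nabla v|^{2\rho}v^{-\kappa}\Bigr)^{(\rho-1)/\rho},
\]
since $(1-\kappa)\rho=(\rho-\kappa)+(\rho-1)(-\kappa)\cdot\frac{\rho}{\rho}$ after matching exponents. Raising to the power $\rho$ then gives the factor $\bigl(\frac{4(\rho-1)^2}{2\rho-\kappa-2}\bigr)^\rho$, with $2\rho-\kappa-2>0$ from $\kappa<2\rho-2$ supplying the positive coercive coefficient.

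(d) Handle the term $\int v\cdot|\nabla v|^{2\rho-2}v^{1-\kappa}$ the same way with $v^{2\rho-\kappa}$, which gives the $C\int v^{2\rho-\kappa}$ term.

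\textbf{Main obstacle.} The hardest part is controlling the Hessian term $D^2v(\nabla v,\nabla v)$ without a dimension-dependent constant. We would first try the pointwise bound $|\nabla|\nabla v|^2|\le2|\nabla v||D^2v|$ together with a weighted Bochner estimate, where the boundary term $\int_{\partial\Omega}|\nabla v|^{2\rho-2}v^{-\kappa}\partial_\nu|\nabla v|^2$ is nonpositive on convex domains and in general is controlled by the trace-type bound $\partial_\nu|\nabla v|^2\le C|\nabla v|^2$. The restriction $\rho\ge3$ is presumably needed so that $|\nabla v|^{2\rho-4}$ is regular enough for these integrations by parts.
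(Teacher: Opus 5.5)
The paper gives no argument of its own for this lemma; it imports the estimate from Kurt's Proposition 3.1. Your sketch therefore has to stand on its own, and it does not. The step that carries the content is asserted rather than proved, and the origin you give for the two factors of $\tilde C$ is read off from the statement. The only concrete computation that is correct is the first integration by parts in Step 1$'$. With $I:=\int_\Omega v^{-\kappa}|\nabla v|^{2\rho}$ it gives
\[
(\kappa-1)I+\int_\Omega\frac{u|\nabla v|^{2\rho-2}}{v^{\kappa-1}}
=\int_\Omega\frac{|\nabla v|^{2\rho-2}}{v^{\kappa-2}}+(\rho-1)L,
\qquad
L:=\int_\Omega\frac{|\nabla v|^{2\rho-4}}{v^{\kappa-1}}\,\nabla v\cdot\nabla|\nabla v|^2 .
\]
Three problems follow from this identity.
\begin{itemize}
\item The $u$-term sits on the left with a favorable sign and is simply dropped. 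Your H\"older step (c), applied at this point, therefore produces nothing, and in particular not $\bigl(\frac{4(\rho-1)^2}{2\rho-\kappa-2}\bigr)^\rho$.
\item Integrating $L$ by parts once more, as in (a), just returns the same identity, so first-order manipulations cannot finish the argument.
\item The ``preliminary estimate'' in (b) is neither derived nor correctly weighted. The weight $v^{2-\kappa}|\nabla v|^{2\rho-2}|D^2v|^2$ does not scale like the integrand of $I$; the natural weight is $v^{2-\kappa}|\nabla v|^{2\rho-4}$.
\end{itemize}

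What is missing is a genuine second-order estimate, and the route you indicate for it fails. Test $\frac12\Delta|\nabla v|^2=|D^2v|^2+\nabla v\cdot\nabla\Delta v$ against $\psi:=v^{2-\kappa}|\nabla v|^{2\rho-4}$ and use $\Delta v=v-u$. With $J:=\int_\Omega\psi|D^2v|^2$ and $K:=\int_\Omega v^{2-\kappa}|\nabla v|^{2\rho-6}|\nabla|\nabla v|^2|^2$ this gives
\[
\begin{aligned}
J+\frac{\rho-2}{2}K+(\kappa-2)\int_\Omega\frac{|\nabla v|^{2\rho-2}}{v^{\kappa-2}}
={}&\frac12\int_{\partial\Omega}\psi\,\partial_\nu|\nabla v|^2+\frac{\kappa-2}{2}L+\int_\Omega\psi(v-u)^2\\
&+(\rho-2)\int_\Omega\frac{|\nabla v|^{2\rho-6}}{v^{\kappa-2}}(\nabla v\cdot\nabla|\nabla v|^2)(v-u)
+(\kappa-2)\int_\Omega\frac{u|\nabla v|^{2\rho-2}}{v^{\kappa-1}} .
\end{aligned}
\]
This is where $u$ actually enters. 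It appears through $\int_\Omega\psi(v-u)^2$, handled by H\"older with exponents $\frac\rho2,\frac{\rho}{\rho-2}$, and through the last term, which is where your H\"older step belongs. The identity also feeds $L$ back in, so the two identities form a coupled system.

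Your plan to bound $L$ by the full Hessian, $|L|\le 2I^{1/2}J^{1/2}$, is too lossy to close this system. Suppose $\int_\Omega u^\rho v^{\rho-\kappa}$ and $\int_\Omega v^{2\rho-\kappa}$ are negligible compared with $I$ and $\Omega$ is convex. Then the identities only yield $\frac{(\kappa-1)^2}{4(\rho-1)^2}I\le J\le\frac{(\kappa-1)(\kappa-2)}{2(\rho-1)}I$ up to $o(I)$. These bounds are compatible, for example at $\rho=\kappa=3$, which is the case the paper needs with $p=2$. So no bound on $I$ follows.

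One has to measure $L$ against $K$ instead, using $|L|\le I^{1/2}K^{1/2}$, and use $K\le 4J$ only on the left. This gives $\frac{2\rho-3}{4}K\le\frac{(\kappa-1)(\kappa-2)}{2(\rho-1)}I$ against $K\ge\frac{(\kappa-1)^2}{(\rho-1)^2}I$, up to $o(I)$. These are incompatible precisely when $\kappa<2\rho-1$, which is where an upper restriction such as the hypothesis $\kappa<2\rho-2$ enters.

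Finally, for non-convex $\Omega$ the bound $\partial_\nu|\nabla v|^2\le C|\nabla v|^2$ only turns the boundary term into $C\int_{\partial\Omega}v^{2-\kappa}|\nabla v|^{2\rho-2}$. That term must still be moved into the interior by a trace inequality and absorbed into $I$, $K$ and $\int_\Omega v^{2\rho-\kappa}$ without spoiling the coefficient of $\int_\Omega u^\rho v^{\rho-\kappa}$. None of these steps is in your proposal.
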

{\bf Proof.} The result is proved in \cite[Proposition 3.1]{ref29}.
\(\hfill\Box\)

\begin{lemma}\label{le2.4}
When \(\gamma=1\), the estimate
\[
\int_\Omega v^{p+1}\leq\int_\Omega u^{p+1}
\]
holds for all \(p\geq1\) and \(t\in(0,T_{\max})\).
\end{lemma}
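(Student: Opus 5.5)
We test the second equation of \eqref{1.3} (with $\gamma=1$) against $v^{p}$ and integrate by parts over $\Omega$. The Neumann boundary condition kills the boundary term, so
\[
0=\int_\Omega v^p\Delta v-\int_\Omega v^{p+1}+\int_\Omega uv^p
=-p\int_\Omega v^{p-1}|\nabla v|^2-\int_\Omega v^{p+1}+\int_\Omega uv^p .
\]
The gradient term is nonpositive because $v>0$ on $(0,T_{\max})$. Hence
\[
\int_\Omega v^{p+1}\le\int_\Omega uv^p .
\]

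We next estimate the right-hand side by Hölder's inequality with exponents $p+1$ and $\frac{p+1}{p}$:
\[
\int_\Omega uv^p\le\Big(\int_\Omega u^{p+1}\Big)^{\frac1{p+1}}\Big(\int_\Omega v^{p+1}\Big)^{\frac p{p+1}} .
\]
Setting $Y:=\int_\Omega v^{p+1}$, we get $Y\le \big(\int_\Omega u^{p+1}\big)^{\frac1{p+1}}Y^{\frac p{p+1}}$.

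If $Y=0$ there is nothing to prove. Otherwise $Y$ is finite and positive, since $v$ is continuous on $\bar\Omega$ for each fixed $t$, so we may divide by $Y^{\frac p{p+1}}$. This gives $Y^{\frac1{p+1}}\le\big(\int_\Omega u^{p+1}\big)^{\frac1{p+1}}$, and raising both sides to the power $p+1$ yields the claim for every $p\ge1$ and $t\in(0,T_{\max})$.

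An alternative route is Young's inequality, $uv^p\le\frac1{p+1}u^{p+1}+\frac p{p+1}v^{p+1}$, followed by absorbing the $v^{p+1}$ term into the left-hand side; it gives the same bound. The only point requiring care is the justification of the integration by parts. This is legitimate because $v\in C^{2,0}$ and $v>0$, so $v^p$ is $C^1$ for $p\ge1$; the argument has no real obstacle.
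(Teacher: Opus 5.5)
Your proof is correct and follows the paper's argument: test the elliptic equation with $v^p$, integrate by parts, drop the nonnegative term $p\int_\Omega v^{p-1}|\nabla v|^2$, and apply H\"older's inequality to $\int_\Omega uv^p$. Your explicit treatment of the case $\int_\Omega v^{p+1}=0$ before dividing is a detail the paper leaves implicit.
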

{\bf Proof.} Multiplying the second equation in \eqref{1.3} by $v^{p}$, integrating by parts over $\Omega$,
and applying H\"older's inequality, we obtain
\begin{align*}
    p\int_\Omega v^{p-1} |\nabla v|^2 + \int_\Omega v^{p+1} = \int_\Omega u v^p
    \leq \left(\int_\Omega u^{p+1}\right)^{\frac{1}{p+1}} \left(\int_\Omega v^{p+1}\right)^{\frac{p}{p+1}}.
\end{align*}
The conclusion follows from the nonnegativity of $\int_\Omega v^{p-1} |\nabla v|^2$.
$\hfill\Box$

\begin{lemma}\label{le2.10}
Let \(N\geq2\) and $\gamma \in (0, \frac{2}{N})$. There exists $L_1 > 0$ such that
$$
\|v(\cdot,t)\|_{L^\infty(\Omega)} \le L_1
$$
for all $t \in (0, T_{\max})$.
\end{lemma}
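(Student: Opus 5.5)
We bound $v$ through the Neumann heat semigroup representation of the resolvent. Since $0=\Delta v-v+u^\gamma$, we have
\[
v(\cdot,t)=(I-\Delta)^{-1}u^\gamma(\cdot,t)=\int_0^\infty e^{-s}e^{s\Delta}u^\gamma(\cdot,t)\,ds .
\]
The only a priori information on $u$ we use is the mass bound of Lemma \ref{le2.1}. In the relevant case $\beta=1$ (which is where $\gamma<\frac2N$ with $N\ge2$ occurs in $\mathcal H$), this gives $\int_\Omega u\le M:=\max\{r/\mu,\int_\Omega u_0\}$. For $\beta>1$, Lemma \ref{le2.1} gives the same type of upper bound, so the argument is uniform in $\beta$. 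With $|\Omega|=1$ and $\gamma<1$, Hölder's inequality yields
\[
\|u^\gamma\|_{L^{1/\gamma}(\Omega)}=\Big(\int_\Omega u\Big)^{\gamma}\le M^\gamma
\]
for all $t\in(0,T_{\max})$.

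We then apply the standard $L^p$--$L^\infty$ smoothing estimate for the Neumann heat semigroup,
\[
\|e^{s\Delta}w\|_{L^\infty}\le C\big(1+s^{-\frac N2\cdot\frac1p}\big)\|w\|_{L^p},
\]
with $p=1/\gamma$, so that the exponent becomes $s^{-N\gamma/2}$. Inserting this into the integral gives
\[
\|v\|_{L^\infty}\le C M^\gamma\int_0^\infty e^{-s}\big(1+s^{-\frac{N\gamma}{2}}\big)\,ds .
\]
This integral is finite exactly when $\frac{N\gamma}{2}<1$, i.e.\ $\gamma<\frac2N$, which is precisely the hypothesis. We therefore obtain $L_1=CM^\gamma(1+\Gamma(1-\frac{N\gamma}2))$, independent of $t$. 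The constant $C$ in the smoothing estimate depends only on $\Omega$ and $N$ (see e.g.\ Winkler's semigroup lemma), and $v\ge0$, so this bounds $\|v\|_{L^\infty}$.

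The main point to check is the integrability at $s=0$, which is exactly where $\gamma<2/N$ enters; the rest is routine. An alternative route uses elliptic $W^{2,1/\gamma}$ regularity and Sobolev embedding, which requires $2\gamma^{-1}>N$, the same condition. That route needs care when $1/\gamma$ is large, but still works via $W^{2,p}\hookrightarrow L^\infty$ for $p>N/2$. The semigroup formulation avoids the $p=1$ endpoint issue, so we use it.
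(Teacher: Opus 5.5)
Your proof is correct but takes a different route from the paper. The paper picks an intermediate exponent $s_1\in(\frac N2,\frac1\gamma)$ and bounds $\int_\Omega u^{\gamma s_1}\le(\int_\Omega u)^{\gamma s_1}$ by H\"older, using $\gamma s_1<1$, $|\Omega|=1$ and Lemma \ref{le2.1}. It then concludes from elliptic $W^{2,s_1}$-regularity for $-\Delta v+v=u^\gamma$ and the embedding $W^{2,s_1}(\Omega)\hookrightarrow L^\infty(\Omega)$, which holds because $s_1>\frac N2$.

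You instead work at the exponent $\frac1\gamma$ and write $v=(I-\Delta)^{-1}u^\gamma$ as a Laplace transform of the Neumann heat semigroup. The hypothesis $\gamma<\frac2N$ then appears as integrability of $s^{-N\gamma/2}$ at $s=0$. Both routes use the same input (the mass bound and $\gamma<1$) and reach the same threshold. Yours gives $L_1=CM^\gamma\bigl(1+\Gamma(1-\frac{N\gamma}{2})\bigr)$, which shows how $L_1$ depends on $\gamma$ and why $\frac2N$ is the critical value. The paper's argument is shorter given standard elliptic $L^p$-theory.

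Your closing remarks on the elliptic route are inaccurate. There is no endpoint issue there: $N\ge2$ gives $\frac N2\ge1$, so every admissible $s_1$ is automatically $>1$ and lies in the range of $L^p$ elliptic regularity. A large $\frac1\gamma$ also causes no difficulty, since one never needs to take the exponent equal to $\frac1\gamma$.

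In your own route, moving the $L^\infty$-norm inside the $s$-integral is best justified pointwise via the nonnegative Neumann heat kernel, because $e^{s\Delta}$ is not strongly continuous on $L^\infty$. This step is routine.
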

{\bf Proof.}
Choose $s_1$ such that $\frac{N}{2}<s_1<\frac{1}{\gamma}$. Since $\gamma s_1<1$,
applying H\"{o}lder's inequality and Lemma \ref{le2.1}, we have
$$
\|u^\gamma\|_{L^{s_1}(\Omega)}^{s_1}= \int_{\Omega} u^{\gamma s_1}
\le |\Omega|^{1-\gamma s_1} \left( \int_{\Omega} u \right)^{\gamma s_1}\le C,
$$
where \(C>0\) is independent of \(t\).
By elliptic regularity, we further deduce
$$
\|v\|_{W^{2,s_1}(\Omega)}\le C_1 \|u^\gamma\|_{L^{s_1}(\Omega)}\le C_2
$$
with $C_1,C_2>0$ both independent of $t$.
The condition $s_1>\frac{N}{2}$ ensures the continuous embedding $W^{2,s_1}(\Omega) \hookrightarrow L^\infty(\Omega)$.
It follows that
$$
\|v\|_{L^\infty(\Omega)} \le L_1.
$$
$\hfill\Box$

\begin{lemma}\label{le2.11}
Let $N\geq2$ and $0<\gamma<\frac{2}{N}$.
For any $p\geq2$ and $\tilde{\varepsilon}>0$, there exists a constant $C(\tilde{\varepsilon},p)>0$ such that
\[
\int_{\Omega}u^p\frac{|\nabla v|^2}{v}
\le \tilde{\varepsilon}\int_\Omega |\nabla u^{\frac{p}{2}}|^2
+ \frac{2L_1}{e}\int_{\Omega}u^p
+ C(\tilde{\varepsilon},p).
\]
\end{lemma}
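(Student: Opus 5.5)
The natural starting point is to split the weight as \(\frac{|\nabla v|^2}{v}=v\cdot\frac{|\nabla v|^2}{v^2}\). The factor \(\frac{|\nabla v|^2}{v^2}\) is controlled by Lemma \ref{le2.2}, and the factor \(v\) is bounded by Lemma \ref{le2.10}. This gives
\[
\int_\Omega u^p\frac{|\nabla v|^2}{v}\le L_1\int_\Omega u^p\frac{|\nabla v|^2}{v^2}\le 4L_1\int_\Omega|\nabla u^{\frac p2}|^2+2L_1\int_\Omega u^p .
\]
This crude bound has the wrong gradient coefficient. To make the coefficient small (\(\tilde\varepsilon\)) and the \(\int_\Omega u^p\) coefficient equal to \(\frac{2L_1}{e}\), I will not use the \(L^\infty\) bound on the whole domain. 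Instead I split \(\Omega\) according to the size of \(v\), with the threshold chosen so that the constant \(e\) appears.

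\textbf{Step 1 (weighted identity).} Test the second equation of \eqref{1.3} with \(u^p\varphi(v)\) for a suitable weight \(\varphi\), instead of \(u^p/v\). I will take \(\varphi(v)=\ln\frac{M}{v}\), or a variant of it, with \(M\) comparable to \(L_1\). Integrating by parts gives
\[
\int_\Omega u^p\,(-\varphi'(v))|\nabla v|^2=p\int_\Omega u^{p-1}\varphi(v)\nabla u\cdot\nabla v+\int_\Omega u^p v\varphi(v)-\int_\Omega u^{p+\gamma}\varphi(v).
\]
With \(\varphi=\ln(M/v)\) we have \(-\varphi'(v)=1/v\), so the left-hand side is exactly \(\int_\Omega u^p|\nabla v|^2/v\). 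We also have \(\varphi\ge0\) on \(\{v\le M\}\), which by Lemma \ref{le2.10} is all of \(\Omega\) once \(M\ge L_1\). Hence the last term is nonpositive and can be dropped. The middle term is bounded using \(\max_{0<v\le M} v\ln\frac Mv=\frac Me\). With \(M=L_1\) this gives the contribution \(\frac{L_1}{e}\int_\Omega u^p\). If the Young step below costs a factor of \(2\), this matches the stated \(\frac{2L_1}{e}\).

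\textbf{Step 2 (cross term).} Since \(p\,u^{p-1}\nabla u=2u^{p/2}\nabla u^{p/2}\), Young's inequality gives
\[
p\,u^{p-1}\varphi\,\nabla u\cdot\nabla v\le\frac12 u^p\frac{|\nabla v|^2}{v}+2|\nabla u^{p/2}|^2\,v\varphi(v)^2 .
\]
After absorbing the first piece into the left-hand side, multiplying by \(2\) produces exactly \(\frac{2L_1}{e}\int_\Omega u^p\) from Step 1, plus the term
\[
4\int_\Omega|\nabla u^{p/2}|^2\, v\ln^2\frac{M}{v}.
\]
The weight \(v\ln^2(M/v)\) is bounded, but it is not small. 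The main obstacle is therefore to make the gradient coefficient at most \(\tilde\varepsilon\).

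\textbf{Step 3 (making the gradient coefficient small).} I will split \(\Omega\) into \(\{v\le\delta\}\) and \(\{v>\delta\}\). On \(\{v\le\delta\}\) we have \(v\ln^2(M/v)\le\delta\ln^2(M/\delta)\to0\) as \(\delta\to0\), so this part of the gradient coefficient is as small as desired. On \(\{v>\delta\}\) I will not use the weighted identity. There I bound directly
\[
\int_{\{v>\delta\}}u^p\frac{|\nabla v|^2}{v}\le\delta^{-1}\int_\Omega u^p|\nabla v|^2 .
\]
Since \(\gamma<\frac2N\), the proof of Lemma \ref{le2.10} gives a time-uniform bound for \(v\) in \(W^{2,s_1}\) with \(s_1>\frac N2\), hence a bound for \(\nabla v\) in \(L^{s}\) for some \(s>N\). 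Then Hölder's inequality, the Gagliardo--Nirenberg inequality (Lemma \ref{le2.7}) applied to \(u^{p/2}\), and the mass bound of Lemma \ref{le2.1} give
\[
\int_\Omega u^p|\nabla v|^2\le\|\nabla v\|_{L^s}^2\,\|u^{p/2}\|_{L^{2s/(s-2)}}^2\le\tilde\varepsilon'\int_\Omega|\nabla u^{\frac p2}|^2+C .
\]
This works because \(2s/(s-2)<2N/(N-2)\) and \(u^{p/2}\) is controlled in \(L^{2/p}\) by the mass. The splitting should be implemented with a smooth cutoff in \(\varphi\), for example \(\varphi=\ln(M/\max\{v,\delta\})\), so that the integration by parts remains valid. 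The two contributions then combine to give the claimed estimate with constant \(C(\tilde\varepsilon,p)\).
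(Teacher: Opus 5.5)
Your Steps 1 and 2 are correct. So is the Gagliardo--Nirenberg bound for $\int_\Omega u^p|\nabla v|^2$ in Step 3; using $\nabla v\in L^s$ with $s>N$, which follows from the $W^{2,s_1}$-bound in the proof of Lemma \ref{le2.10}, is even a bit cleaner than the paper's $L^{N+2}$ route with its case distinction on $\gamma q_0$.

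The gap is in the localization that is supposed to make the gradient coefficient small. The identity of Step 1 with $\varphi=\ln(M/v)$ is global. Your Young step therefore produces $4\int_\Omega|\nabla u^{p/2}|^2\,v\ln^2(M/v)$ over all of $\Omega$, including $\{v>\delta\}$, where the weight is of order $M$; you cannot simply decline to use the identity there. The cutoff you propose, $\varphi=\ln(M/\max\{v,\delta\})$, localizes in the wrong direction. It is constant on $\{v\le\delta\}$, so $-\varphi'(v)=0$ there, and the left-hand side of the identity becomes $\int_{\{v>\delta\}}u^p|\nabla v|^2/v$. That only reproduces a bound you already have from $\delta^{-1}\int_\Omega u^p|\nabla v|^2$. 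It says nothing about $\int_{\{v\le\delta\}}u^p|\nabla v|^2/v$, which is the only part where the singular weight matters. As written, the main obstacle you identified is not overcome.

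The repair is short, and there are two ways to do it.

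\emph{First option.} Take $\varphi(v)=\bigl(\ln\frac{\delta}{v}\bigr)_+$. It vanishes on $\{v\ge\delta\}$, has $-\varphi'=1/v$ on $\{v<\delta\}$, and satisfies $\varphi\ge0$, $v\varphi\le\frac{\delta}{e}$ and $v\varphi^2\le\frac{4\delta}{e^2}$. Your Steps 1--2 then give
\[
\int_{\{v<\delta\}}u^p\frac{|\nabla v|^2}{v}\le\frac{16\delta}{e^2}\int_\Omega|\nabla u^{\frac p2}|^2+\frac{2\delta}{e}\int_\Omega u^p .
\]
Add the $\{v\ge\delta\}$ part from Step 3, fixing first $\delta\le L_1$ with $16\delta/e^2\le\tilde\varepsilon/2$ and then $\tilde\varepsilon'$; this proves the lemma.

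\emph{Second option.} Keep the global identity with $\varphi=\ln(L_1/v)$ and split only the Young step for the cross term. On $\{v\le\delta\}$, proceed as in your Step 2; there $v\ln^2(L_1/v)\le\delta\ln^2(L_1/\delta)$ is small once $\delta\le L_1/e^2$ is small. On $\{v>\delta\}$, use $\varphi\le\ln(L_1/\delta)$ and put the small parameter on $|\nabla u^{p/2}|^2$. This leaves $C_\delta\int_\Omega u^p|\nabla v|^2$, which Step 3 handles.

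The second option is a hard-threshold version of the paper's proof. The paper tests with the same $u^p\ln\frac{v}{L_1}$, but instead of splitting it uses $\ln\frac{L_1}{v}\le\frac{L_1^\zeta}{e\zeta}v^{-\zeta}$ with $\zeta<\frac12$ and puts $\varepsilon_1$ on the gradient in Young's inequality. It then interpolates the weight $v^{-2\zeta}$ between $v^{-1}$ and $v^{0}$ by H\"older, absorbs half of $\int_\Omega u^p|\nabla v|^2/v$ on the left, and sends the remainder to the estimate for $\int_\Omega u^p|\nabla v|^2$.
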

{\bf Proof.}
An application of H\"{o}lder's inequality yields
\begin{align}\label{0.1}
    \int_{\Omega}u^p|\nabla v|^2
    \le \left(\int_{\Omega}u^{\frac{p(N+2)}{N}}\right)^{\frac{N}{N+2}}
    \left(\int_{\Omega}|\nabla v|^{N+2}\right)^{\frac{2}{N+2}}.
\end{align}
By Lemma \ref{le2.7} and Lemma \ref{le2.1}, we obtain
\begin{align}\label{0.2}
    \left(\int_{\Omega}u^{\frac{p(N+2)}{N}}\right)^{\frac{N}{N+2}}
    &= \|u^{\frac{p}{2}}\|_{L^{\frac{2(N+2)}{N}}(\Omega)}^2 \nonumber\\
    &\le \left[ C_{GN}\left(\|\nabla u^{\frac{p}{2}}\|_{L^2(\Omega)}^{a}
    \|u^{\frac{p}{2}}\|_{L^{\frac{2}{p}}(\Omega)}^{1-a}
    + \|u^{\frac{p}{2}}\|_{L^{\frac{2}{p}}(\Omega)}\right) \right]^2 \nonumber\\
    &\le C_3\left(\int_{\Omega}|\nabla u^{\frac{p}{2}}|^2\right)^{a} + C_4,
\end{align}
where $a= \frac{\frac{p}{2} - \frac{N}{2(N+2)}}{\frac{p}{2} + \frac{1}{N} - \frac{1}{2}} \in (0,1)$
and \(C_3,C_4>0\) depend on \(p\) and \(N\).
The Sobolev embedding theorem together with the regularity theory for elliptic equations leads to
\begin{align}\label{1}
    \left(\int_{\Omega} |\nabla v|^{N+2}\right)^{\frac{2}{N+2}}= \|\nabla v\|_{L^{N+2}(\Omega)}^2
    \le C_5 \|v\|_{W^{2,q_0}(\Omega)}^2\le C_6 \|u^\gamma\|_{L^{q_0}(\Omega)}^2
    = C_6\left(\int_{\Omega} u^{\gamma q_0}\right)^{\frac{2}{q_0}},
\end{align}
where $q_0 = \frac{N(N+2)}{2N+2}$ and the constants $C_5,C_6>0$ depend only on $N,\Omega$ and are independent of $p$.
If $\gamma q_0 \le 1$, Lemma \ref{le2.1} implies that
$$
\left(\int_{\Omega} |\nabla v|^{N+2}\right)^{\frac{2}{N+2}} \le C
$$
for some $C>0$ independent of $p$ and $t$.
Combining \eqref{0.1} with \eqref{0.2}, we infer that when $\gamma q_0 \le 1$,
there exists $\tilde{a} \in (0,1)$ such that
\begin{align}\label{1.1.0}
    \int_{\Omega} u^p |\nabla v|^2
    \le C_7\left(\int_{\Omega} |\nabla u^{\frac{p}{2}}|^2\right)^{\tilde{a}} + C_8,
\end{align}
where \(C_7,C_8>0\) depend on \(N,p\) and are independent of \(t\).
If $\gamma q_0 > 1$, in view of Lemma \ref{le2.6} and $\gamma < \frac{2}{N}$, we get
$$
\|u\|_{L^{\gamma q_0}(\Omega)}
\le \|u\|_{L^{\frac{p(N+2)}{N}}(\Omega)}^{\sigma} \|u\|_{L^1(\Omega)}^{1-\sigma},
$$
where $\sigma = \frac{1-\frac{1}{\gamma q_0}}{1-\frac{N}{p(N+2)}} \in (0,1)$.
Lemma \ref{le2.1} further implies that there is \(C_9=C_9(N,p)>0\) such that
\begin{align}\label{2}
    \left(\int_{\Omega} u^{\gamma q_0}\right)^{\frac{2}{q_0}}
    = \|u\|_{L^{\gamma q_0}(\Omega)}^{2\gamma}
    \le C_9 \|u\|_{L^{\frac{p(N+2)}{N}}(\Omega)}^{2\gamma\sigma}
    = C_9\|u^{\frac{p}{2}}\|_{L^{\frac{2(N+2)}{N}}(\Omega)}^{\frac{4\gamma\sigma}{p}}.
\end{align}
Combining \eqref{1} and \eqref{2}, we have
\begin{align}\label{3}
    \left(\int_{\Omega} u^{\frac{p(N+2)}{N}}\right)^{\frac{N}{N+2}}
    \left(\int_{\Omega} |\nabla v|^{N+2}\right)^{\frac{2}{N+2}}
    &\le C_{10} \|u^{\frac{p}{2}}\|_{L^{\frac{2(N+2)}{N}}(\Omega)}^2
    \|u^{\frac{p}{2}}\|_{L^{\frac{2(N+2)}{N}}(\Omega)}^{\frac{4\gamma\sigma}{p}} \nonumber\\
    &= C_{10} \|u^{\frac{p}{2}}\|_{L^{\frac{2(N+2)}{N}}(\Omega)}^{2+\frac{4\gamma\sigma}{p}}
\end{align}
with \(C_{10}=C_{10}(N,p)>0\).
Applying Lemma \ref{le2.7} and Lemma \ref{le2.1}, we obtain
\begin{align}\label{4}
   \|u^{\frac{p}{2}}\|_{L^{\frac{2(N+2)}{N}}(\Omega)}^{2+\frac{4\gamma\sigma}{p}}
    &\le C_{GN} \left(\|\nabla u^{\frac{p}{2}}\|_{L^2(\Omega)}^{a}
    \|u^{\frac{p}{2}}\|_{L^{\frac{2}{p}}(\Omega)}^{1-a}
    + \|u^{\frac{p}{2}}\|_{L^{\frac{2}{p}}(\Omega)} \right)^{2+\frac{4\gamma\sigma}{p}} \nonumber\\
    &\le C_{11} \left( \int_{\Omega} |\nabla u^{\frac{p}{2}}|^2 \right)^{\left(1+\frac{2\gamma\sigma}{p}\right)a} + C_{12},
\end{align}
where $a = \frac{\frac{p}{2} - \frac{N}{2(N+2)}}{\frac{p}{2} + \frac{1}{N} - \frac{1}{2}}$
and \(C_{11},C_{12}>0\) depend on \(p\) and \(N\).
The condition $\gamma < \frac{2}{N}$ ensures that $\left(1+\frac{2\gamma\sigma}{p}\right)a < 1$.
For $\gamma q_0 > 1$, combining \eqref{3}, \eqref{4} and \eqref{0.1},
we find that there exists $\hat{a} \in (0,1)$ such that
\begin{align}\label{5}
    \int_{\Omega} u^p |\nabla v|^2
    \le C_{13} \left( \int_{\Omega} |\nabla u^{\frac{p}{2}}|^2 \right)^{\hat{a}} + C_{14},
\end{align}
where \(C_{13},C_{14}>0\) depend on \(N\) and \(p\).
Combining \eqref{1.1.0} and \eqref{5} with Young's inequality, we deduce that for each $\gamma \in (0,\frac{2}{N})$
and $\varepsilon_0>0$, there exists $C(\varepsilon_0, p) > 0$ such that
\begin{align}\label{5.0}
   \int_{\Omega} u^p |\nabla v|^2
   \le \varepsilon_0 \int_{\Omega} |\nabla u^{\frac{p}{2}}|^2 + C(\varepsilon_0, p).
\end{align}
Testing the equation $\Delta v - v + u^{\gamma} = 0$ by $u^p \ln \frac{v}{L_1}$ and integrating over $\Omega$, we obtain
\begin{align*}
   \int_{\Omega} u^p \frac{|\nabla v|^2}{v}
   = -p \int_{\Omega} u^{p-1} \nabla u \cdot \nabla v \ln \frac{v}{L_1}
   - \int_{\Omega} u^p v \ln \frac{v}{L_1}
   + \int_{\Omega} u^{p+\gamma} \ln \frac{v}{L_1}.
\end{align*}
By using Lemma \ref{le2.10}, we have $\ln \frac{v}{L_1} \le 0$.
Moreover, $-x\ln x \le \frac{1}{e}$ for $x>0$ implies that $-v\ln \frac{v}{L_1} \le \frac{L_1}{e}$.
It follows that
\begin{align*}
   \int_{\Omega} u^p \frac{|\nabla v|^2}{v}
   \le p \int_{\Omega} u^{p-1} |\nabla u| |\nabla v| \ln \frac{L_1}{v}
   + \frac{L_1}{e} \int_{\Omega} u^p.
\end{align*}
For $x, \zeta > 0$, the estimate $\frac{\ln x}{x^\zeta} \le \frac{1}{e\zeta}$ holds.
Taking a fixed $\zeta \in (0, \frac{1}{2})$ and setting $x = \frac{L_1}{v}$,
we have $\ln \frac{L_1}{v} \le \frac{L_1^\zeta}{e\zeta v^\zeta}$.
Therefore, we deduce that
\begin{align}\label{le2.11-aux}
   \int_{\Omega} u^p \frac{|\nabla v|^2}{v}
   \le \frac{p L_1^\zeta}{e\zeta} \int_{\Omega} u^{p-1} |\nabla u| |\nabla v| v^{-\zeta}
   + \frac{L_1}{e} \int_{\Omega} u^p.
\end{align}
For every $\varepsilon_1 > 0$, Young's inequality implies that
\begin{align}\label{6}
    \frac{p L_1^\zeta}{e\zeta} \int_{\Omega} u^{p-1} |\nabla u| |\nabla v| v^{-\zeta}
    \le \varepsilon_1 \int_{\Omega} |\nabla u^{\frac{p}{2}}|^2
    + \frac{p^2 L_1^{2\zeta}}{4\varepsilon_1 e^2 \zeta^2} \int_{\Omega} u^p \frac{|\nabla v|^2}{v^{2\zeta}}.
\end{align}
For every $\varepsilon_2 > 0$, H\"{o}lder's inequality and Young's inequality lead to
\begin{align}\label{7}
    \int_{\Omega} u^p \frac{|\nabla v|^2}{v^{2\zeta}}
    &= \int_{\Omega} \left(u^p \frac{|\nabla v|^2}{v}\right)^{2\zeta}
    \left(u^p |\nabla v|^2\right)^{1-2\zeta} \nonumber\\
    &\le \left(\int_{\Omega} u^p \frac{|\nabla v|^2}{v}\right)^{2\zeta}
    \left(\int_{\Omega} u^p |\nabla v|^2\right)^{1-2\zeta} \nonumber\\
    &\le 2\zeta \varepsilon_2 \int_{\Omega} u^p \frac{|\nabla v|^2}{v}
    + (1-2\zeta) \varepsilon_2^{-\frac{2\zeta}{1-2\zeta}} \int_{\Omega} u^p |\nabla v|^2.
\end{align}
Setting $\varepsilon_2 = \frac{\varepsilon_1 e^2 \zeta}{p^2 L_1^{2\zeta}}$
and substituting \eqref{7} into \eqref{6}, we find that
\begin{align}\label{8}
    \frac{p L_1^\zeta}{e\zeta} \int_{\Omega} u^{p-1} |\nabla u| |\nabla v| v^{-\zeta}
    &\le \varepsilon_1 \int_{\Omega} |\nabla u^{\frac{p}{2}}|^2
    + \frac{1}{2} \int_{\Omega} u^p \frac{|\nabla v|^2}{v} \nonumber\\
    &\quad + \frac{p^2 L_1^{2\zeta}}{4\varepsilon_1 e^2 \zeta^2}
    (1-2\zeta) \left( \frac{\varepsilon_1 e^2 \zeta}{p^2 L_1^{2\zeta}} \right)^{-\frac{2\zeta}{1-2\zeta}}
    \int_{\Omega} u^p |\nabla v|^2.
\end{align}
Substituting \eqref{8} into \eqref{le2.11-aux}, we obtain
\begin{align*}
   \int_{\Omega} u^p \frac{|\nabla v|^2}{v}
   \le 2\varepsilon_1 \int_{\Omega} |\nabla u^{\frac{p}{2}}|^2
   + \frac{p^2 L_1^{2\zeta}}{2\varepsilon_1 e^2 \zeta^2} (1-2\zeta)
   \left( \frac{\varepsilon_1 e^2 \zeta}{p^2 L_1^{2\zeta}} \right)^{-\frac{2\zeta}{1-2\zeta}}
   \int_{\Omega} u^p |\nabla v|^2
   + \frac{2L_1}{e} \int_{\Omega} u^p.
\end{align*}
Setting $\varepsilon_1 = \frac{\tilde{\varepsilon}}{4}$, we infer that
\begin{align}\label{9}
   \int_{\Omega} u^p \frac{|\nabla v|^2}{v}
   \le \frac{\tilde{\varepsilon}}{2} \int_{\Omega} |\nabla u^{\frac{p}{2}}|^2
   + M_\zeta \int_{\Omega} u^p |\nabla v|^2
   + \frac{2L_1}{e} \int_{\Omega} u^p,
\end{align}
where $M_\zeta$ is defined by
\begin{align*}
   M_\zeta = \frac{2p^2 L_1^{2\zeta}}{\tilde{\varepsilon} e^2 \zeta^2} (1-2\zeta)
   \left( \frac{\tilde{\varepsilon} e^2 \zeta}{4p^2 L_1^{2\zeta}} \right)^{-\frac{2\zeta}{1-2\zeta}}.
\end{align*}
Taking $\varepsilon_0 = \frac{\tilde{\varepsilon}}{2M_\zeta}$ in \eqref{5.0} and substituting \eqref{5.0} into \eqref{9},
we conclude that
\begin{align*}
   \int_{\Omega} u^p \frac{|\nabla v|^2}{v}
   \le \tilde{\varepsilon} \int_{\Omega} |\nabla u^{\frac{p}{2}}|^2
   + \frac{2L_1}{e} \int_{\Omega} u^p + C(\tilde{\varepsilon},p).
\end{align*}
$\hfill\Box$

We now combine these estimates to prove Theorem \ref{th3.2}.\\
{\bf Proof of Theorem \ref{th3.2}.}
We first consider the case \(\gamma=1\), and treat the case \(0<\gamma<\frac{2}{N}\) separately in Part II.2 below.
Testing the first equation in \eqref{1.3} by $pu^{p-1}$ for $p\geq2$ and integrating over $\Omega$ gives
\begin{align*}
    \frac{d}{dt}\int_\Omega u^p&= -\frac{4(p-1)}{p}\int_\Omega |\nabla u^{\frac{p}{2}}|^2
    + \chi(p-1)\int_\Omega \nabla u^p\cdot \frac{\nabla v}{v^k}
    + rp\int_\Omega u^{p+\alpha-1}
    - \mu p\int_\Omega u^{p+\alpha-1}\int_\Omega u^\beta \\
    &= -\frac{4(p-1)}{p}\int_\Omega |\nabla u^{\frac{p}{2}}|^2
    - \chi(p-1)\int_\Omega u^p \frac{\Delta v}{v^k}
    + k\chi(p-1)\int_\Omega u^p \frac{|\nabla v|^2}{v^{k+1}} \\
    &\quad + rp\int_\Omega u^{p+\alpha-1} - \mu p\int_\Omega u^{p+\alpha-1}\int_\Omega u^\beta.
\end{align*}
By using the second equation of \eqref{1.3}, we get
\begin{align}\label{4.1}
    \frac{d}{dt}\int_\Omega u^p
    &\leq -\frac{4(p-1)}{p}\int_\Omega |\nabla u^{\frac{p}{2}}|^2
    + \chi(p-1)\int_\Omega \frac{u^{p+1}}{v^k}
    + \chi k(p-1)\int_\Omega u^p \frac{|\nabla v|^2}{v^{k+1}} \nonumber\\
    &\quad + rp\int_\Omega u^{p+\alpha-1} - \mu p\int_\Omega u^{p+\alpha-1}\int_\Omega u^\beta.
\end{align}
By Young's inequality and Lemma \ref{le2.2} with $\lambda_1 = \frac{1}{2p\chi}$, we deduce that
\begin{align}\label{4.2}
    \chi(p-1)\int_\Omega \frac{u^{p+1}}{v^k}
    &= \chi(p-1)\int_\Omega \left( \frac{u^{(p+1)k}}{v^k} \lambda_1^k \right)
    \cdot \left( \lambda_1^{-k} u^{(p+1)(1-k)} \right) \nonumber\\
    &\leq \chi(p-1)\lambda_1 \int_\Omega \frac{u^{p+1}}{v}
    + \chi(p-1)\lambda_1^{-\frac{k}{1-k}} \int_\Omega u^{p+1}\nonumber\\
    &\leq \frac{p-1}{p}\int_\Omega |\nabla u^{\frac{p}{2}}|^2
    + \frac{p-1}{2p}\int_\Omega u^p
    + c_1 \int_\Omega u^{p+1},
\end{align}
where $c_1=(p-1)(2p)^{\frac{k}{1-k}} \chi^{\frac{1}{1-k}}$.
Similarly, set $\lambda_2 = \frac{1}{4p\chi k}$. A further application of Young's inequality and Lemma \ref{le2.2} yields
\begin{align}\label{4.2.1}
    \chi k(p-1)\int_\Omega u^p \frac{|\nabla v|^2}{v^{k+1}}
    &= \chi k(p-1)\int_\Omega \left( u^{pk} \frac{|\nabla v|^{2k}}{v^{2k}} \lambda_2^k \right)
    \cdot \left( \lambda_2^{-k} u^{p(1-k)} \frac{|\nabla v|^{2(1-k)}}{v^{1-k}} \right) \nonumber\\
    &\leq \chi k(p-1)\lambda_2 \int_\Omega u^p \frac{|\nabla v|^2}{v^2}
    + \chi k(p-1)\lambda_2^{-\frac{k}{1-k}} \int_\Omega u^p \frac{|\nabla v|^2}{v}\nonumber\\
    &\leq \frac{p-1}{p}\int_\Omega |\nabla u^{\frac{p}{2}}|^2
    + \frac{p-1}{2p}\int_\Omega u^p
    + c_2 \int_\Omega u^p \frac{|\nabla v|^2}{v},
\end{align}
where $c_2=4^{\frac{k}{1-k}}(\chi k)^{\frac{1}{1-k}}(p-1)p^{\frac{k}{1-k}}$.
Applying H\"{o}lder's inequality, Lemma \ref{le2.3} with $\rho = \kappa = p+1$, and Lemma \ref{le2.4}, we derive
\begin{align*}
    \int_\Omega u^p \frac{|\nabla v|^2}{v}
    &\leq \left( \int_\Omega u^{p+1} \right)^{\frac{p}{p+1}}
    \left( \int_\Omega \frac{|\nabla v|^{2(p+1)}}{v^{p+1}} \right)^{\frac{1}{p+1}} \\
    &\leq \left( \int_\Omega u^{p+1} \right)^{\frac{p}{p+1}}
    \left( c_3 \int_\Omega u^{p+1} + C\int_\Omega v^{p+1} \right)^{\frac{1}{p+1}} \\
    &\leq \left( \int_\Omega u^{p+1} \right)^{\frac{p}{p+1}}
    \big((c_3+C)\int_\Omega u^{p+1}\big)^{\frac{1}{p+1}}\\
    &\leq (c_3+C)^{\frac{1}{p+1}} \int_\Omega u^{p+1},
\end{align*}
where $c_3 = \left( \frac{4p^2}{p-1} \right)^{p+1} \left( \frac{2p}{p-1} \right)^{\frac{p+1}{2}}$.
We then arrive at
\begin{align}\label{4.3}
    \chi k(p-1)\int_{\Omega} u^p \frac{|\nabla v|^2}{v^{k+1}}
    \leq \frac{p-1}{p} \int_{\Omega} |\nabla u^{\frac{p}{2}}|^2
    + \frac{p-1}{2p} \int_{\Omega} u^p + c_2 (c_3 + C)^{\frac{1}{p+1}} \int_{\Omega} u^{p+1}.
\end{align}
Substituting \eqref{4.2} and \eqref{4.3} into \eqref{4.1} yields
\begin{align}\label{4.4}
    \frac{d}{dt} \int_{\Omega} u^p \leq &-\frac{2(p-1)}{p} \int_{\Omega} |\nabla u^{\frac{p}{2}}|^2 + \frac{p-1}{p}
    \int_{\Omega} u^p + M \int_{\Omega} u^{p+1}
    + p \int_{\Omega} u^{p+\alpha-1}(r-\mu \int_{\Omega} u^\beta),
\end{align}
where $M = c_1 + c_2 (c_3 + C)^{\frac{1}{p+1}}$.
\par\medskip
\noindent{\bf I. The superlinear nonlocal damping $\beta>1$.}

\smallskip
\noindent{\bf I.1. The range $1\leq\alpha<2$.}

\smallskip
\noindent{\it The case $N\geq3$.}
Let $p\geq2$ be sufficiently large. For some \(c(\alpha,p,r)>0\), Young's inequality gives
\[
rp\int_\Omega u^{p+\alpha-1}
\leq \int_\Omega u^{p+1}+c(\alpha,p,r),
\quad
\frac{p-1}{p}\int_\Omega u^p
\leq \int_\Omega u^{p+1}+c(p).
\]
Accordingly, \eqref{4.4} takes the form
\begin{align}\label{4.5}
    \frac{d}{dt} \int_{\Omega} u^{p} \leq -\frac{2(p-1)}{p} \int_{\Omega} |\nabla u^{\frac{p}{2}}|^2 + (M+2)
    \int_{\Omega} u^{p+1} - \mu p \int_{\Omega} u^{p+\alpha-1} \int_{\Omega} u^{\beta} + c(\alpha,p).
\end{align}
Set $\omega = u^{\frac{p}{2}}$. In Lemma \ref{le2.5}, take $l = \frac{2p_0}{p}$ and $s = \frac{2(p+1)}{p}$.
Here $p$ and $p_0$ satisfy
\begin{align}\label{4.5.1}
    p > \max\left\{ \frac{2}{q-2}, 1 \right\}, \quad \frac{p}{2} < p_0 < p+1 < \frac{pq}{2}
    \quad \text{and} \quad p_0 > \frac{q}{q-2},
\end{align}
which respectively ensure that $1 \le l < s < q$ and \(\frac{s}{l}<\frac{2}{l}+\frac{2}{N}\).
Let $C_{N_1} = \frac{p-1}{p(M+2)}$ and $C_{N_2} = \frac{1}{M+2}$. Then Lemma \ref{le2.5} entails
\begin{align}\label{4.6}
    \int_{\Omega} u^{p+1} = \| u^{\frac{p}{2}} \|_{L^{\frac{2(p+1)}{p}}(\Omega)}^{\frac{2(p+1)}{p}}
    \leq \frac{p-1}{p(M+2)} \int_{\Omega} |\nabla u^{\frac{p}{2}}|^2 + \frac{1}{M+2} \int_{\Omega} u^{p}
    + c_4 \|u\|_{L^{p_0}(\Omega)}^{\psi}
\end{align}
with $\eta = \frac{\frac{p}{p_0} - \frac{p}{p+1}}{\frac{p}{p_0} - \frac{2}{q}}$, $\theta = \frac{2(1-\eta)s}{2 - \eta s}$,
$\psi = \frac{p\theta}{2} = \frac{p_0(qp - 2p - 2)}{p_0(q-2) - q}$.
Inserting \eqref{4.6} into \eqref{4.5} leads to
\begin{align}\label{4.7}
    \frac{d}{dt} \int_{\Omega} u^{p} \leq -\frac{p-1}{p} \int_{\Omega} |\nabla u^{\frac{p}{2}}|^2
    + \int_{\Omega} u^{p}+c_4 \|u\|_{L^{p_0}(\Omega)}^{\psi}-\mu p\int_{\Omega}u^{p+\alpha-1} \int_{\Omega} u^{\beta}+ c_5.
\end{align}
For $\beta < p_0 < p+\alpha-1$, Lemma \ref{le2.6} implies
\begin{align*}
    \|u\|_{L^{p_0}(\Omega)}^{\psi}
    & \leq \|u\|_{L^{p+\alpha-1}(\Omega)}^{\psi\sigma} \|u\|_{L^{\beta}(\Omega)}^{\psi(1-\sigma)} \\
    & = \left( \|u\|_{L^{p+\alpha-1}(\Omega)}^{p+\alpha-1} \|u\|_{L^{\beta}(\Omega)}^{\beta} \right)^{\frac{\psi \sigma}{p+\alpha-1}}
    \|u\|_{L^{\beta}(\Omega)}^{\psi\left(1-\sigma - \frac{\sigma \beta}{p+\alpha-1}\right)},
\end{align*}
where $\sigma = \frac{\frac{1}{\beta} - \frac{1}{p_0}}{\frac{1}{\beta} - \frac{1}{p+\alpha-1}} \in (0,1)$.
By the arbitrariness of $p_0$, we take $p_0 = \frac{p+\alpha-1+\beta}{2}$ such that
$1-\sigma - \frac{\sigma \beta}{p+\alpha-1} = 0$.
Moreover, since $p_0=\frac{p+\alpha-1+\beta}{2}$,
we obtain
\[
\frac{\psi\sigma}{p+\alpha-1}
=
\frac{\psi}{2p_0}
=
\frac{qp-2p-2}
     {2\left[p_0(q-2)-q\right]}.
\]
Due to the condition \(p_0>\frac{q}{q-2}\), a direct calculation shows that $\frac{\psi}{2p_0}<1$
whenever $\alpha+\beta>3+\frac{2}{q-2}=2+\frac{N}{2}$, which is precisely the corresponding assumption in \(\mathrm{(H_1)}\).
From Young's inequality, we infer that
$$
c_4\|u\|_{L^{p_0}(\Omega)}^\psi \le \frac{\mu p}{3} \int_\Omega u^{p+\alpha-1} \int_\Omega u^\beta + c_6,
$$
where $c_4,c_5,c_6>0$ depend on $p$ and $N$.
Substituting this into \eqref{4.7}, we obtain
\begin{align}\label{4.11.1}
    \frac{d}{dt} \int_\Omega u^p \le -\frac{p-1}{p}\int_\Omega |\nabla u^{\frac{p}{2}}|^2
    + \int_\Omega u^p - \frac{2\mu p}{3} \int_\Omega u^{p+\alpha-1} \int_\Omega u^\beta + c(p,N)
\end{align}
with $c(p,N)>0$.
Lemma \ref{le2.7} and Lemma \ref{le2.1} combined with Young's inequality further show that
\begin{align}\label{4.12}
    \int_{\Omega} u^p
    &= \| u^{\frac{p}{2}}\|_{L^2(\Omega)}^2 \nonumber\\
    &\le C_{GN}\left( \| \nabla u^{\frac{p}{2}} \|_{L^2(\Omega)}^a \| u^{\frac{p}{2}} \|_{L^{\frac{2}{p}}(\Omega)}^{1-a}
    + \| u^{\frac{p}{2}} \|_{L^{\frac{2}{p}}(\Omega)} \right)^2 \nonumber\\
    &\le 2C_{GN}\left( \int_{\Omega} |\nabla u^{\frac{p}{2}}|^2 dx \right)^a \left( \int_{\Omega} u \right)^{(1-a)p}
    + 2C_{GN}\left( \int_{\Omega} u \right)^p \nonumber\\
    &\le \frac{p-1}{4p} \int_{\Omega} |\nabla u^{\frac{p}{2}}|^2  + c(p,N).
\end{align}
Applying \eqref{4.12} to \eqref{4.11.1} gives
\begin{align}\label{4.13}
    \frac{d}{dt} \int_{\Omega} u^p + \int_{\Omega} u^p \le c(p,N),
\end{align}
where the constant $c(p,N)>0$ is independent of $t$,
and hence Gronwall's inequality gives the uniform bound for $\int_\Omega u^p$.

\smallskip
\noindent{\it The case $N=1$.}
For $p\ge 2$, one obtains from Lemma \ref{le2.7}
\begin{align*}
    (M+2)\int_{\Omega} u^{p+1}
    &=(M+2)\|u^{\frac{p}{2}}\|_{L^{\frac{2(p+1)}{p}}(\Omega)}^{\frac{2(p+1)}{p}}\\
    &\le C_{GN}\left(\|\nabla u^{\frac{p}{2}}\|_{L^2(\Omega)}^{a}\|u^{\frac{p}{2}}\|_{L^{\frac{2}{p}}(\Omega)}^{1-a}
    +\|u^{\frac{p}{2}}\|_{L^{\frac{2}{p}}(\Omega)}\right)^{\frac{2(p+1)}{p}}\\
    &\le c(p)\left(\int_{\Omega}|\nabla u^{\frac{p}{2}}|^2\right)^{\frac{a(p+1)}{p}}
    \left(\int_{\Omega}u\right)^{(p+1)(1-a)}
    +c(p)\left(\int_{\Omega}u\right)^{p+1}.
\end{align*}
Since $N=1$, we have $\frac{a(p+1)}{p}< 1$.
Combining Lemma \ref{le2.1} and Young's inequality, we obtain
$$
(M+2)\int_{\Omega}u^{p+1}
\le \frac{p-1}{p}\int_{\Omega}|\nabla u^{\frac{p}{2}}|^2
+c(p).
$$
Using this estimate together with \eqref{4.12} in \eqref{4.5} yields
$$
\frac{d}{dt}\int_{\Omega}u^p+\int_{\Omega}u^p\le c(p),
$$
where $c(p)>0$ is independent of $t$, so the uniform $L^p$-bound follows from Gronwall's inequality.

\smallskip
\noindent{\it The case $N=2$.}
Let \(p\geq2\) be sufficiently large. Set $p_0:=\frac{p+\alpha-1+\beta}{2}$, $l:=\frac{2p_0}{p}$ and $s:=\frac{2(p+1)}{p}$.
This choice ensures that $\beta<p_0<p+\alpha-1$, $1\leq l<s$ and $\frac{s}{l}<\frac{2}{l}+1$.
An application of Lemma \ref{le2.5} to \(\omega=u^{\frac p2}\) yields
\begin{align}\label{4.6-N2}
(M+2)\int_\Omega u^{p+1}
&\leq
\frac{p-1}{p}
\int_\Omega|\nabla u^{\frac p2}|^2
+\int_\Omega u^p
+c_7\|u\|_{L^{p_0}(\Omega)}^\psi,
\end{align}
where $\psi=\frac{p\theta}{2}=\frac{pp_0}{p_0-1}$.
For $p_0=\frac{p+\alpha-1+\beta}{2}$,
Lemma \ref{le2.6} gives
\begin{align*}
\|u\|_{L^{p_0}(\Omega)}^\psi
&\leq
\|u\|_{L^{p+\alpha-1}(\Omega)}^{
\frac{\psi(p+\alpha-1)}{2p_0}}
\|u\|_{L^\beta(\Omega)}^{
\frac{\psi\beta}{2p_0}}\\
&=
\left(
\int_\Omega u^{p+\alpha-1}
\int_\Omega u^\beta
\right)^{\frac{\psi}{2p_0}}.
\end{align*}
A direct calculation confirms $\frac{\psi}{2p_0}<1$ under $\alpha+\beta>3$.
We thus obtain from Young's inequality
\begin{align}\label{4.7-N2}
c_7\|u\|_{L^{p_0}(\Omega)}^\psi
\leq
\frac{\mu p}{2}
\int_\Omega u^{p+\alpha-1}
\int_\Omega u^\beta
+c_8,
\end{align}
where $c_7,c_8>0$ depend on $p$.
From \eqref{4.5}, \eqref{4.6-N2} and \eqref{4.7-N2}, we derive for some $c(p) > 0$ that
\[
\frac{d}{dt}\int_\Omega u^p
\leq
-\frac{p-1}{p}
\int_\Omega|\nabla u^{\frac p2}|^2
+\int_\Omega u^p
-\frac{\mu p}{2}
\int_\Omega u^{p+\alpha-1}
\int_\Omega u^\beta
+c(p).
\]
Combining \eqref{4.12} with Young's inequality, we arrive at
\[
\frac{d}{dt}\int_\Omega u^p
+\int_\Omega u^p
\leq c(p),
\]
where $c(p)>0$ is independent of $t$.
An application of Gronwall's inequality completes the $L^p$-estimate for every sufficiently large $p$,
and H\"older's inequality covers the remaining exponents $p\ge 2$.

\smallskip
\noindent{\bf I.2. The range $\alpha\geq2$.}

For every $N\geq1$, Young's inequality yields constants $K_p>0$ and $c(p)>0$ independent of $t$ such that \eqref{4.4} becomes
\begin{align}\label{4.14}
\frac{d}{dt}\int_\Omega u^p
&\leq-\frac{2(p-1)}{p}\int_\Omega|\nabla u^{\frac p2}|^2
+K_p\int_\Omega u^{p+\alpha-1} -\mu p\int_\Omega u^{p+\alpha-1}
\int_\Omega u^\beta+c(p).
\end{align}
\smallskip
\noindent{\it The case $N\geq3$.}
Let \(p\geq2\) satisfy $p>\frac{(N-2)(\alpha-1)}{2}$ and choose $1 < p_1 < p+\alpha-1$.
Applying H\"{o}lder's inequality and the Sobolev embedding theorem, we prove that
\begin{align*}
    \int_{\Omega} u^{p+\alpha-1}
    &= \int_{\Omega} u^{\lambda_3 \frac{p}{2} \frac{2(p+\alpha-1)}{p}}
    u^{(1-\lambda_3) \frac{p}{2} \frac{2(p+\alpha-1)}{p}} \\
    &\le \| u^{\frac{p}{2}} \|_{L^q(\Omega)}^{\frac{2\lambda_3(p+\alpha-1)}{p}}
    \| u^{\frac{p}{2}} \|_{L^{\frac{2p_1}{p}}(\Omega)}^{\frac{2(1-\lambda_3)(p+\alpha-1)}{p}} \\
    &\le c_{9} \left(\| \nabla u^{\frac{p}{2}} \|_{L^2(\Omega)}^{\lambda_3}
    \| u^{\frac{p}{2}}\|_{L^{\frac{2p_1}{p}}(\Omega)}^{1-\lambda_3}
    + \| u^{\frac{p}{2}} \|_{L^2(\Omega)}^{\lambda_3}
    \| u^{\frac{p}{2}} \|_{L^{\frac{2p_1}{p}}(\Omega)}^{1-\lambda_3}\right)^{\frac{2(p+\alpha-1)}{p}} \\
    &\le c_{10} \Bigg[\left( \int_{\Omega} |\nabla u^{\frac{p}{2}}|^2 \right)^{\frac{\lambda_3(p+\alpha-1)}{p}}
    \left( \int_{\Omega} u^{p_1} \right)^{\frac{(1-\lambda_3)(p+\alpha-1)}{p_1}}
    +\left( \int_{\Omega} u^p \right)^{\frac{\lambda_3(p+\alpha-1)}{p}}
    \left( \int_{\Omega} u^{p_1} \right)^{\frac{(1-\lambda_3)(p+\alpha-1)}{p_1}}\Bigg],
\end{align*}
where the constants $c_9,c_{10}>0$ depend on $p$ and $N$.
It follows from H\"{o}lder's inequality that
$$
\lambda_3 = \frac{\frac{Np}{2p_1} - \frac{Np}{2(p+\alpha-1)}}{\frac{Np}{2p_1} + 1 - \frac{N}{2}} \in (0,1).
$$
In addition, choose $ p_1 > \frac{N(\alpha-1)}{2} $ such that $ \frac{\lambda_3(p+\alpha-1)}{p} < 1 $.
Young's inequality then leads to
\begin{align}\label{4.15}
    K_p\int_{\Omega}u^{p+\alpha-1}
    \le \frac{p-1}{p}\int_{\Omega}|\nabla u^{\frac{p}{2}}|^2 + c_{11}\left(\int_{\Omega}u^{p_1}\right)^{\psi}
     + c_{10}\left(\int_{\Omega}u^{p}\right)^{\frac{\lambda_3(p+\alpha-1)}{p}}
    \left(\int_{\Omega}u^{p_1}\right)^{\frac{(1-\lambda_3)(p+\alpha-1)}{p_1}},
\end{align}
where $\psi = \frac{(1-\lambda_3)(p+\alpha-1)}{p_1} \cdot \frac{p}{p-\lambda_3(p+\alpha-1)}$.
Take $\beta < p_1 < p+\alpha-1$. In view of Lemma \ref{le2.6}, we infer that
\begin{align}\label{4.15.1}
    \|u\|_{L^{p_1}(\Omega)} \le \|u\|_{L^{p+\alpha-1}(\Omega)}^{\sigma} \|u\|_{L^{\beta}(\Omega)}^{1-\sigma},
\end{align}
where $\sigma = \frac{\frac{1}{\beta} - \frac{1}{p_1}}{\frac{1}{\beta} - \frac{1}{p+\alpha-1}} \in (0,1)$.
Thus, we derive that
\begin{align*}
    \left( \int_{\Omega} u^{p_1} \right)^{\psi}
    &= \|u\|_{L^{p_1}(\Omega)}^{\psi p_1}
    \le \|u\|_{L^{p+\alpha-1}(\Omega)}^{\psi p_1 \sigma} \|u\|_{L^{\beta}(\Omega)}^{\psi p_1(1-\sigma)} \\
    &= \left( \|u\|_{L^{p+\alpha-1}(\Omega)}^{p+\alpha-1} \|u\|_{L^{\beta}(\Omega)}^{\beta} \right)^{\frac{\psi p_1 \sigma}{p+\alpha-1}}
    \|u\|_{L^{\beta}(\Omega)}^{\psi p_1(1-\sigma) - \frac{\psi p_1 \sigma \beta}{p+\alpha-1}}.
\end{align*}
Fix $p_1=\frac{p+\alpha-1+\beta}{2}$,
which ensures $p_1(1-\sigma)-\frac{p_1\sigma\beta}{p+\alpha-1}=0$,
equivalently $1-\sigma-\frac{\sigma\beta}{p+\alpha-1}=0$.
Moreover, the assumption $ 2 \le \alpha < 1 + \frac{2\beta}{N} $ guarantees that $\frac{\psi p_1 \sigma}{p+\alpha-1} < 1$.
Young's inequality therefore entails
\begin{align}\label{4.16}
    c_{11} \left( \int_{\Omega} u^{p_1} \right)^{\psi}
    \le \frac{\mu p}{4} \int_{\Omega} u^{p+\alpha-1} \int_{\Omega} u^{\beta}  + c_{12}.
\end{align}
Substituting \eqref{4.16} into \eqref{4.15} and combining with \eqref{4.14} yields
\begin{align}\label{4.17}
    \frac{d}{dt} \int_{\Omega} u^p
    \le &-\frac{p-1}{p} \int_{\Omega} |\nabla u^{\frac{p}{2}}|^2
    - \frac{3\mu p}{4} \int_{\Omega} u^{p+\alpha-1} \int_{\Omega} u^{\beta}\nonumber\\
    &+ c_{10} \left( \int_{\Omega} u^p \right)^{\frac{\lambda_3(p+\alpha-1)}{p}}
    \left( \int_{\Omega} u^{p_1} \right)^{\frac{(1-\lambda_3)(p+\alpha-1)}{p_1}}+ c_{13}.
\end{align}
In addition, H\"{o}lder's inequality implies
$$
\|u^{\beta}\|_{L^1(\Omega)}
\le \|u^{\beta}\|_{L^{\frac{p+\alpha-1}{\beta}}(\Omega)}
\|1\|_{L^{\frac{p+\alpha-1}{p+\alpha-1-\beta}}(\Omega)}.
$$
It follows that
\begin{align}\label{4.18}
    \frac{\mu p}{4} \left( \int_{\Omega} u^{\beta}  \right)^{\frac{p+\alpha-1+\beta}{\beta}}
    \le \frac{\mu p}{4} \int_{\Omega} u^{p+\alpha-1}  \int_{\Omega} u^{\beta}.
\end{align}
Adding \eqref{4.18} to \eqref{4.17} and using \eqref{4.15.1}, we deduce that
\begin{align}\label{4.19}
\quad\frac{d}{dt}\int_{\Omega}u^p+\frac{\mu p}{4}
    \left(\int_{\Omega}u^{\beta}\right)^{\frac{p+\alpha-1+\beta}{\beta}}
    \le &-\frac{p-1}{p}\int_{\Omega}|\nabla u^{\frac{p}{2}}|^2
    -\frac{\mu p}{2}\int_{\Omega}u^{p+\alpha-1}\int_{\Omega}u^{\beta}\nonumber\\
     &+ c_{10}\left(\int_{\Omega}u^p\right)^{\frac{\lambda_3(p+\alpha-1)}{p}}
    \left(\|u\|_{L^{p+\alpha-1}(\Omega)}^{\sigma}\|u\|_{L^{\beta}(\Omega)}^{1-\sigma}\right)^{(1-\lambda_3)(p+\alpha-1)}+c_{13}.
\end{align}
Assume first that $\beta\geq2$ and take $p=\beta$. Via $1-\sigma-\frac{\sigma\beta}{p+\alpha-1}=0$
and Young's inequality with indices
$\left(\frac{1}{(1-\lambda_3)\sigma},\ \frac{1}{1-\sigma(1-\lambda_3)}\right)$,
we infer that
\begin{align*}
    &\quad c_{10}\left(\int_{\Omega}u^{\beta}\right)^{\frac{\lambda_3(\beta+\alpha-1)}{\beta}}
    \left(\|u\|_{L^{\beta+\alpha-1}(\Omega)}^{\sigma}\|u\|_{L^{\beta}(\Omega)}^{1-\sigma}\right)^{(1-\lambda_3)(\beta+\alpha-1)}\\
    &= c_{10}\left(\int_{\Omega}u^{\beta}\right)^{\frac{\lambda_3(\beta+\alpha-1)}{\beta}}
    \left(\|u\|_{L^{\beta+\alpha-1}(\Omega)}^{\beta+\alpha-1}\|u\|_{L^{\beta}(\Omega)}^{\beta}\right)^{\sigma(1-\lambda_3)} \\
    &\le \frac{\mu\beta}{4}\int_{\Omega}u^{\beta+\alpha-1}\int_{\Omega}u^{\beta}
    + c_{14}\left(\int_{\Omega}u^{\beta}\right)^{\frac{\lambda_3(\beta+\alpha-1)}{\beta}\cdot\frac{1}{1-\sigma(1-\lambda_3)}}.
\end{align*}
A direct computation verifies that $\frac{\lambda_3(\beta+\alpha-1)}{1-\sigma(1-\lambda_3)} < 2\beta+\alpha-1$.
From Young's inequality, one infers
\begin{align}\label{4.19.1}
    &\quad c_{10}\left(\int_{\Omega}u^{\beta}\right)^{\frac{\lambda_3(\beta+\alpha-1)}{\beta}}
    \left(\|u\|_{L^{\beta+\alpha-1}(\Omega)}^{\sigma}\|u\|_{L^{\beta}(\Omega)}^{1-\sigma}\right)^{(1-\lambda_3)(\beta+\alpha-1)}\nonumber\\
    &\le \frac{\mu\beta}{4}\int_{\Omega}u^{\beta+\alpha-1}\int_{\Omega}u^{\beta}
    + \frac{\mu\beta}{8}\left(\int_{\Omega}u^{\beta}\right)^{\frac{2\beta+\alpha-1}{\beta}} + c_{15}.
\end{align}
Here $c_{11},\ldots,c_{15}>0$ depend on $p$ and $N$.
For $p=\beta$, inserting \eqref{4.19.1} into \eqref{4.19}, we find that
\begin{align*}
    \frac{d}{dt}\int_{\Omega}u^{\beta}
    + \frac{\mu\beta}{8}\left(\int_{\Omega}u^{\beta}dx\right)^{\frac{2\beta+\alpha-1}{\beta}}
    \le -\frac{\beta-1}{\beta}\int_{\Omega}|\nabla u^{\frac{\beta}{2}}|^2
    - \frac{\mu\beta}{4}\int_{\Omega}u^{\beta+\alpha-1}\int_{\Omega}u^{\beta} + c(\beta,N).
\end{align*}
It follows from Lemma \ref{le2.8} that
$$
\int_{\Omega}u^{\beta}dx \le \max\left\{ \|u_0\|_{L^{\beta}(\Omega)}^{\beta},\ c(\beta,N) \right\},
$$
where $c(\beta,N)>0$.
Let \(1<\beta<2\). The assumptions \(N\geq3\) and
\(2\leq\alpha<1+\frac{2\beta}{N}\) imply \(N<2\beta<4\) and hence \(N=3\).
Taking \(p=2\) and \(p_1=\frac{\alpha+1+\beta}{2}\) in \eqref{4.19},
set $\sigma_0:=\frac{\frac1\beta-\frac12}{\frac1\beta-\frac1{\alpha+1}}$
and $\sigma_1:=\frac{\alpha+1}{\alpha+1+\beta}$.
Since \(2<p_1\) and \(0<\lambda_3<1\), it follows that
\[
0<\sigma_0\lambda_3+\sigma_1(1-\lambda_3)<\sigma_1
=\frac{\alpha+1}{\alpha+1+\beta}.
\]
For some $c>0$, applying Lemma \ref{le2.6} and Young's inequality, we derive
\begin{align}\label{4.19.1.0}
\left(\int_\Omega u^2\right)^{\frac{\lambda_3(\alpha+1)}{2}}
\left(\|u\|_{L^{\alpha+1}(\Omega)}^{\sigma_1}\|u\|_{L^\beta(\Omega)}^{1-\sigma_1}
\right)^{(1-\lambda_3)(\alpha+1)}
&\leq\|u\|_{L^{\alpha+1}(\Omega)}^{
[\sigma_0\lambda_3+\sigma_1(1-\lambda_3)](\alpha+1)}
\|u\|_{L^\beta(\Omega)}^{[(1-\sigma_0)\lambda_3
 +(1-\sigma_1)(1-\lambda_3)](\alpha+1)}\nonumber\\
&\leq\frac{\mu}{2}\int_\Omega u^{\alpha+1}
\int_\Omega u^\beta+\frac{\mu}{8}
\left(\int_\Omega u^\beta\right)^{
\frac{\alpha+1+\beta}{\beta}}+c.
\end{align}
Inserting \eqref{4.19.1.0} into \eqref{4.19} with \(p=2\), we find \(c_{16},c_{17},c_{18}>0\) such that
\[
\frac{d}{dt}\int_\Omega u^2+c_{16}\int_\Omega|\nabla u|^2+c_{17}
\left(\int_\Omega u^\beta\right)^{\frac{\alpha+1+\beta}{\beta}}\leq c_{18}.
\]
Applying Lemma \ref{le2.7} with \(r=m_0=2\) and \(s_0=z=1\), we use Lemma \ref{le2.1} and Young's inequality to derive
\[
\int_\Omega u^2\leq\frac{c_{16}}{2}\int_\Omega|\nabla u|^2+c_{19}.
\]
We thus obtain
\[
\frac{d}{dt}\int_\Omega u^2+\int_\Omega u^2\leq c,
\]
where $c>0$ is independent of $t$.
Gronwall's inequality combined with H\"older's inequality yields
\[
\sup_{0<t<T_{\max}}\int_\Omega u^\beta\leq c(\beta,N).
\]
Combining the cases \(1<\beta<2\) and \(\beta\geq2\), we conclude that
\begin{align}\label{4.19.2}
\sup_{0<t<T_{\max}}\int_\Omega u^\beta\leq c(\beta,N).
\end{align}

We next establish the \(L^p\)-estimate for sufficiently large \(p>\beta\).
By Lemma \ref{le2.7} and \eqref{4.19.2}, we obtain
\begin{align*}
\int_\Omega u^{p+\alpha-1}&=\|u^{\frac p2}\|_{L^{\frac{2(p+\alpha-1)}{p}}(\Omega)
}^{\frac{2(p+\alpha-1)}{p}}
\leq c_{20}\left(\int_\Omega|\nabla u^{\frac p2}|^2\right)^a+c_{21},
\end{align*}
where $a=\frac{p+\alpha-1}{p}\frac{\frac{p}{2\beta}-\frac{p}{2(p+\alpha-1)}
}{\frac{p}{2\beta}+\frac1N-\frac12}$, and $c_{20},c_{21}>0$ depend on $p$ and $N$.
A direct calculation shows $a<1$ provided $\alpha<1+\frac{2\beta}{N}$.
We then apply Young's inequality to obtain
\[
K_p\int_\Omega u^{p+\alpha-1}\leq
\frac{p-1}{p}\int_\Omega|\nabla u^{\frac p2}|^2+c(p,N).
\]
Substituting this estimate into \eqref{4.14} yields
\begin{align}\label{4.20}
\frac{d}{dt}\int_\Omega u^p+\frac{p-1}{p}\int_\Omega
|\nabla u^{\frac p2}|^2\leq c(p,N).
\end{align}
By Lemma \ref{le2.7}, \eqref{4.19.2} and Young's inequality, we have
\[
\int_\Omega u^p\leq\frac{p-1}{2p}
\int_\Omega|\nabla u^{\frac p2}|^2+c(p,N).
\]
We combine this estimate with \eqref{4.20} and apply Gronwall's inequality to obtain
\[
\sup_{0<t<T_{\max}}\int_\Omega u^p\leq c(p,N),
\]
where $c(p,N)>0$ is independent of $t$.
H\"older's inequality covers the remaining exponents.

For $N\in\{1,2\}$, choose sufficiently large $p\geq2$ and set
$p_2=\frac{p+\alpha-1+\beta}{2}$.
We thus have $\beta<p_2<p+\alpha-1$ and ensure that
$p_2>\max\left\{\frac p2,\frac{\alpha-1}{2}\right\}$ if $N=1$,
and $p_2>\max\left\{\frac p2,\alpha-1\right\}$ if $N=2$.
Set $\omega=u^{\frac p2}$, $l=\frac{2p_2}{p}$ and $s=\frac{2(p+\alpha-1)}{p}$.
Taking $C_{N_1}=\frac{p-1}{pK_p}$ and $C_{N_2}=\frac1{K_p}$ in Lemma \ref{le2.5}, we arrive at
\begin{align}\label{4.21}
K_p\int_\Omega u^{p+\alpha-1}
&\leq\frac{p-1}{p}\int_\Omega|\nabla u^{\frac p2}|^2
+\int_\Omega u^p+c_{22}\left(\int_\Omega u^{p_2}\right)^{\frac{p\theta}{2p_2}},
\end{align}
where $\eta=\frac{\frac{p}{p_2}-\frac{p}{p+\alpha-1}}{\frac{p}{p_2}+\frac{2}{N}-1}$ and $\theta=\frac{2(1-\eta)s}{2-\eta s}$.
By Lemma \ref{le2.6}, we get
\begin{align*}
\|u\|_{L^{p_2}(\Omega)}
\leq
\|u\|_{L^\beta(\Omega)}^\sigma
\|u\|_{L^{p+\alpha-1}(\Omega)}^{1-\sigma},
\end{align*}
where $\sigma=\frac{\frac1{p_2}-\frac1{p+\alpha-1}}
{\frac1\beta-\frac1{p+\alpha-1}}=\frac{\beta}{p+\alpha-1+\beta}$
and $1-\sigma-\frac{p+\alpha-1}{\beta}\sigma=0$.
Thus, we have
\begin{align*}
\left(\int_\Omega u^{p_2}\right)^{\frac{p\theta}{2p_2}}
&=\|u\|_{L^{p_2}(\Omega)}^{\frac{p\theta}{2}}
\leq
\left(\int_\Omega u^{p+\alpha-1}\int_\Omega u^\beta\right)^{
\frac{p\theta\sigma}{2\beta}}.
\end{align*}
A direct calculation shows that \(\frac{p\theta\sigma}{2\beta}=\frac{2p+\alpha-1}{2(p+\beta)}\) for \(N=1\),
and \(\frac{p\theta\sigma}{2\beta}=\frac{p}{p+\beta-\alpha+1}\) for \(N=2\).
Since \(\alpha<1+\frac{2\beta}{N}\),
Young's inequality yields
\begin{align}\label{4.22}
c_{22}
\left(\int_\Omega u^{p_2}\right)^{
\frac{p\theta}{2p_2}}
&\leq\frac{\mu p}{2}
\int_\Omega u^{p+\alpha-1}
\int_\Omega u^\beta+c_{23},
\end{align}
where $c_{22},c_{23}>0$ depend on $p$ and $N$.
Combining \eqref{4.14}, \eqref{4.21} and \eqref{4.22}, we derive
\begin{align*}
\frac{d}{dt}\int_\Omega u^p
&\leq-\frac{p-1}{p}\int_\Omega|\nabla u^{\frac p2}|^2
+\int_\Omega u^p-\frac{\mu p}{2}\int_\Omega u^{p+\alpha-1}
\int_\Omega u^\beta+c(p,N).
\end{align*}
We use \eqref{4.12} and Gronwall's inequality to conclude that
\begin{align*}
\sup_{0<t<T_{\max}}\int_\Omega u^p\leq c(p,N),
\end{align*}
where $c(p,N)>0$ is independent of $t$.
H\"older's inequality covers all remaining $p\geq2$.

\par\medskip
\noindent{\bf II. The linear nonlocal damping $\beta=1$.}

\smallskip
\noindent{\bf II.1. The one-dimensional setting $N=1$.}\par
Let $m(t)=\int_{\Omega}u(t)dx$ and $m_0=\int_{\Omega}u_0(x)dx$.
We rewrite \eqref{4.4} as
\begin{align}\label{4.26}
    \frac{d}{dt}\int_{\Omega}u^p + \int_{\Omega}u^p+ \frac{2(p-1)}{p}\int_{\Omega}|\nabla u^{\frac{p}{2}}|^2
    \le \frac{2p-1}{p}\int_{\Omega}u^p+ M\int_{\Omega}u^{p+1}+ (r-\mu m(t))p\int_{\Omega}u^{p+\alpha-1}.
\end{align}
\smallskip
\noindent{\it (a) The subcritical initial mass $m_0 < \frac{r}{\mu}$.}

By Lemma \ref{le2.1}, we have $0 < r-\mu m(t) \le r-\mu m_0$.
For $1 \le \alpha < 2$, Young's inequality provides $c_{24},c_{25}>0$ depending on $p$ such that
$$
\int_{\Omega}u^p\le \int_{\Omega}u^{p+1} + c_{24}, \quad
\int_{\Omega}u^{p+\alpha-1}\le \int_{\Omega}u^{p+1} + c_{25}.
$$
It then follows that
\begin{align}\label{4.27}
    \frac{d}{dt}\int_{\Omega}u^p+ \int_{\Omega}u^p+ \frac{2(p-1)}{p}\int_{\Omega}|\nabla u^{\frac{p}{2}}|^2
    \le (M+2+rp-\mu m_0p)\int_{\Omega}u^{p+1} + c(p).
\end{align}
For $p > 1$, we apply Lemma \ref{le2.7} and derive
\begin{align*}
    \int_{\Omega} u^{p+1}=\|u^{\frac{p}{2}}\|_{L^{\frac{2(p+1)}{p}}(\Omega)}^{\frac{2(p+1)}{p}}
&\le C_{GN}\left( \|\nabla u^{\frac{p}{2}}\|_{L^2(\Omega)}^{a} \|u^{\frac{p}{2}}\|_{L^{\frac{2}{p}}(\Omega)}^{1-a}
+\|u^{\frac{p}{2}}\|_{L^{\frac{2}{p}}(\Omega)} \right)^{\frac{2(p+1)}{p}} \\
&\le c(p)\left(\int_{\Omega} |\nabla u^{\frac{p}{2}}|^2\right)^{\frac{a(p+1)}{p}}
\left(\int_{\Omega} u\right)^{(p+1)(1-a)}
+ c(p)\left(\int_{\Omega} u\right)^{p+1},
\end{align*}
where $a = \frac{\frac{p}{2} - \frac{p}{2(p+1)}}{\frac{p}{2} + \frac{1}{N} - \frac{1}{2}}$.
Since $N =1$, we have $\frac{a(p+1)}{p} < 1$. Combining Young's inequality with Lemma \ref{le2.1}, we obtain
\begin{align}\label{4.28}
    (M+2+rp-\mu m_0p)\int_{\Omega} u^{p+1}\le \frac{p-1}{p}\int_{\Omega} |\nabla u^{\frac{p}{2}}|^2 + c(p).
\end{align}
Substituting \eqref{4.28} into \eqref{4.27}, we get
$$
\frac{d}{dt}\int_{\Omega} u^p + \int_{\Omega} u^p \le c(p),
$$
where $c(p)>0$ is independent of $t$.

For $\alpha \ge 2$, by Young's inequality, we have
$$
\int_{\Omega} u^p \le \int_{\Omega} u^{p+\alpha-1} + c(\alpha,p), \quad
\int_{\Omega} u^{p+1} \le \int_{\Omega} u^{p+\alpha-1} + c(\alpha,p).
$$
Then \eqref{4.26} can be rewritten as
\begin{align}\label{4.29}
\frac{d}{dt}\int_{\Omega} u^p
+ \int_{\Omega} u^p
+ \frac{2(p-1)}{p} \int_{\Omega} |\nabla u^{\frac{p}{2}}|^2
\le (M+2+r p-\mu m_0p) \int_{\Omega} u^{p+\alpha-1} + c(p).
\end{align}
Similarly, it follows from Lemma \ref{le2.7} that
\begin{align*}
    \int_{\Omega} u^{p+\alpha-1}
    &= \|u^{\frac{p}{2}}\|_{L^{\frac{2(p+\alpha-1)}{p}}(\Omega)}^{\frac{2(p+\alpha-1)}{p}} \\
    &\le C_{GN} \left( \|\nabla u^{\frac{p}{2}}\|_{L^2(\Omega)}^{a} \|u^{\frac{p}{2}}\|_{L^{\frac{2}{p}}(\Omega)}^{1-a}
    + \|u^{\frac{p}{2}}\|_{L^{\frac{2}{p}}(\Omega)} \right)^{\frac{2(p+\alpha-1)}{p}} \\
    &\le c(p) \left( \int_{\Omega} |\nabla u^{\frac{p}{2}}|^2 \right)^{\frac{a(p+\alpha-1)}{p}}
    \left( \int_{\Omega} u \right)^{(p+\alpha-1)(1-a)}
    + c(p)\left( \int_{\Omega} u \right)^{p+\alpha-1},
\end{align*}
where $a = \frac{\frac{p}{2} - \frac{p}{2(p+\alpha-1)}}{\frac{p}{2} + \frac{1}{N} - \frac{1}{2}}$.
Since $\alpha < 1 + \frac{2}{N}$, we have $\frac{a(p+\alpha-1)}{p} < 1$.
Then by Young's inequality and Lemma \ref{le2.1}, we deduce
\begin{align}\label{4.30}
    (M+2+r p-\mu m_0p)\int_{\Omega}u^{p+\alpha-1}\le \frac{p-1}{p}\int_{\Omega}|\nabla u^{\frac{p}{2}}|^2 + c(p).
\end{align}
For some $c(p)>0$ independent of $t$, substituting \eqref{4.30} into \eqref{4.29} yields
$$
\frac{d}{dt}\int_{\Omega}u^p + \int_{\Omega}u^p \le c(p).
$$

\smallskip
\noindent{\it (b) The critical or supercritical initial mass $m_0 \geq \frac{r}{\mu}$.}

By Lemma \ref{le2.1}, we know that $\frac{r}{\mu} \le m(t) \le m_0$.
Then we rewrite \eqref{4.26} as
\begin{align}\label{4.31}
    \frac{d}{dt}\int_{\Omega}u^p+ \int_{\Omega}u^p+ \frac{2(p-1)}{p}\int_{\Omega}|\nabla u^{\frac{p}{2}}|^2
    \le \frac{2p-1}{p}\int_{\Omega}u^p+ M\int_{\Omega}u^{p+1}.
\end{align}
We proceed as in \eqref{4.28} to obtain
\[
\frac{d}{dt}\int_{\Omega}u^p + \int_{\Omega}u^p \le c(p),
\]
where $c(p)>0$ is independent of $t$.
This completes the one-dimensional case.

\smallskip
\noindent{\bf II.2. The higher-dimensional setting $N\geq2$.}\par
Here \(0<\gamma<\frac{2}{N}\).
Repeating the derivation of
\eqref{4.1}-\eqref{4.2} with $u^{p+\gamma}$ in place of $u^{p+1}$, we conclude that
\begin{align}\label{4.32}
    \frac{d}{dt} \int_{\Omega} u^p
    &\le -\frac{3(p-1)}{p} \int_{\Omega} |\nabla u^{\frac{p}{2}}|^2
    + \chi k(p-1) \int_{\Omega} u^p \frac{|\nabla v|^2}{v^{1+k}} \nonumber\\
    &\quad + \frac{p-1}{2p} \int_{\Omega} u^p
    + c_1 \int_{\Omega} u^{p+\gamma}
    + (r-\mu m(t)) p \int_{\Omega} u^{p+\alpha-1},
\end{align}
where $c_1 = (p-1)(2p)^{\frac{k}{1-k}} \chi^{\frac{1}{1-k}}$.
Combining \eqref{4.32} with \eqref{4.2.1} and Lemma \ref{le2.11}, we get
\begin{align}\label{4.33}
    \chi k(p-1) \int_{\Omega} u^p \frac{|\nabla v|^2}{v^{1+k}}
    \le \frac{3(p-1)}{2p} \int_{\Omega} |\nabla u^{\frac{p}{2}}|^2
    + \left( \frac{p-1}{2p} + \frac{2L_1}{e} c_2 \right) \int_{\Omega} u^p + c_{28}.
\end{align}
Substituting \eqref{4.33} into \eqref{4.32} leads to
\begin{align}\label{4.34}
    \frac{d}{dt} \int_{\Omega} u^p
    &\le -\frac{3(p-1)}{2p} \int_{\Omega} |\nabla u^{\frac{p}{2}}|^2
    + \left( \frac{p-1}{p} + \frac{2L_1}{e} c_2 \right) \int_{\Omega} u^p \nonumber\\
    &\quad + c_1 \int_{\Omega} u^{p+\gamma}
    + (r-\mu m(t)) p \int_{\Omega} u^{p+\alpha-1} + c_{28},
\end{align}
where $c_{28}=c_{28}(p,N)>0$ is independent of $t$.

\smallskip
\noindent{\it (a) The subcritical initial mass $m_0 < \frac{r}{\mu}$.}

By Lemma \ref{le2.1}, we have $0<r-\mu m(t)\leq r-\mu m_0$.
Set $\phi:=\max\{\gamma,\alpha-1\}$.
Moreover, Young's inequality gives
$$
\int_\Omega u^p
\leq
\int_\Omega u^{p+\phi}+c(p),
\quad
\int_\Omega u^{p+\gamma}
\leq
\int_\Omega u^{p+\phi}+c(p)
\quad \text{and}\quad
\int_\Omega u^{p+\alpha-1}
\leq
\int_\Omega u^{p+\phi}+c(p).
$$
Thus, it follows from \eqref{4.34} that
\begin{align}\label{4.35}
    \frac{d}{dt} \int_{\Omega} u^p
    \le -\frac{3(p-1)}{2p} \int_{\Omega} |\nabla u^{\frac{p}{2}}|^2
    + c_{29} \int_{\Omega} u^{p+\phi} + c_{30},
\end{align}
where $c_{29}:=\frac{p-1}{p}+\frac{2L_1}{e}c_2+c_1+rp-\mu m_0p>0$ and $c_{30}=c_{30}(p,N)>0$.
Proceeding as in \eqref{4} and using Lemmas \ref{le2.7} and \ref{le2.1}, we obtain
\begin{align}\label{4.35.1}
    \int_{\Omega} u^{p+\phi}
    = \| u^{\frac{p}{2}} \|_{L^{\frac{2(p+\phi)}{p}}(\Omega)}^{\frac{2(p+\phi)}{p}}
    \le c_{31} \left( \int_{\Omega} |\nabla u^{\frac{p}{2}}|^2 \right)^{\frac{p+\phi-1}{p-1+\frac{2}{N}}} + c_{32}.
\end{align}
Since $\phi < \frac{2}{N}$, Young's inequality provides
\begin{align}\label{4.36}
    \int_{\Omega} u^{p+\phi}
    \le \frac{p-1}{2c_{29} p} \int_{\Omega} |\nabla u^{\frac{p}{2}}|^2 + c_{33},
\end{align}
where $c_{31},c_{32},c_{33}>0$ depend on $p$ and $N$.
Substituting \eqref{4.36} into \eqref{4.35}, we infer that
\begin{align}\label{4.37}
    \frac{d}{dt} \int_{\Omega} u^p
    \le -\frac{p-1}{p} \int_{\Omega} |\nabla u^{\frac{p}{2}}|^2 + c(p,N).
\end{align}
Combining \eqref{4.12} and \eqref{4.37} and applying Gronwall's inequality yields
\begin{align*}
\int_{\Omega}u^p\leq c(p,N).
\end{align*}

\smallskip
\noindent{\it (b) The critical or supercritical initial mass $m_0 \ge \frac{r}{\mu}$.}

By Lemma \ref{le2.1}, \eqref{4.34} becomes
\begin{align*}
    \frac{d}{dt} \int_{\Omega} u^p
    \le -\frac{3(p-1)}{2p} \int_{\Omega} |\nabla u^{\frac{p}{2}}|^2
    + \left( \frac{p-1}{p} + \frac{2L_1}{e} c_2 \right) \int_{\Omega} u^p
    + c_1 \int_{\Omega} u^{p+\gamma} + c_{28}.
\end{align*}
Young's inequality provides
\begin{align*}
    \int_{\Omega} u^p \le \int_{\Omega} u^{p+\gamma} + c(p).
\end{align*}
It follows that
\begin{align*}
    \frac{d}{dt} \int_{\Omega} u^p
    \le -\frac{3(p-1)}{2p} \int_{\Omega} |\nabla u^{\frac{p}{2}}|^2
    + c_{34} \int_{\Omega} u^{p+\gamma} + c(p,N),
\end{align*}
where $c_{34}:=\frac{p-1}{p}+\frac{2L_1}{e}c_2+c_1>0$.
Take $\phi = \gamma < \frac{2}{N}$ in \eqref{4.35.1} and combine with Young's inequality to get
\begin{align*}
    \int_{\Omega} u^{p+\gamma}
    \le \frac{p-1}{2c_{34} p} \int_{\Omega} |\nabla u^{\frac{p}{2}}|^2 + c(p,N),
\end{align*}
which implies
\begin{align*}
    \frac{d}{dt} \int_{\Omega} u^p
    \le -\frac{p-1}{p} \int_{\Omega} |\nabla u^{\frac{p}{2}}|^2 + c(p,N).
\end{align*}
Applying \eqref{4.12} together with Gronwall's inequality gives
\[
\sup_{0<t<T_{\max}}\int_\Omega u^p(x,t)\,dx
\leq c(p,N).
\]
$\hfill\Box$

\section{Global boundedness}
In this section, we prove Theorem \ref{th3.4}.
We first derive a positive lower bound for \(v\) under a uniform bound for \(u\)
and then obtain the \(L^\infty(\Omega)\)-estimate by the Moser iteration method.
Throughout this section, we denote by $c_i$ positive constants, which are renumbered anew starting from $c_1$.

\begin{lemma}\label{lem:v-lower}
Let \(\beta\geq1\) and \(0<\gamma\leq1\). Suppose that
\[
K:=
\max\left\{
1,\|u_0\|_{L^\infty(\Omega)},
\sup_{0<t<T_{\max}}
\|u(\cdot,t)\|_{L^\infty(\Omega)}
\right\}<\infty.
\]
Then there exists \(\delta_v>0\) such that
\[
v(x,t)\geq \delta_v
\]
for all \(x\in\Omega\) and \(t\in(0,T_{\max})\).
\end{lemma}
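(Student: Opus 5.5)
The plan is to represent $v$ through the Neumann heat semigroup and derive a pointwise lower bound from a uniform-in-time positive lower bound on $\int_\Omega u^\gamma$. First, since $v$ solves $-\Delta v+v=u^\gamma$ with homogeneous Neumann data, we have
\[
v(\cdot,t)=\int_0^\infty e^{-s}e^{s\Delta}u^\gamma(\cdot,t)\,ds .
\]
The Neumann heat kernel on a smooth bounded domain satisfies a Gaussian-type lower bound $G(x,y,s)\ge c_0>0$ for $s\in[1,2]$, uniformly in $x,y\in\bar\Omega$. Restricting the time integral to $[1,2]$ then gives
\[
v(x,t)\geq c_0 e^{-2}\int_\Omega u^\gamma(\cdot,t)
\]
for all $x\in\Omega$ and $t\in(0,T_{\max})$. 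So it suffices to bound $\int_\Omega u^\gamma$ from below uniformly in $t$.

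Next we use the hypothesis $u\le K$. Because $0<\gamma\le1$ and $u\le K$ with $K\ge1$, we have $u=u^\gamma u^{1-\gamma}\le K^{1-\gamma}u^\gamma$. Hence $\int_\Omega u^\gamma\ge K^{\gamma-1}\int_\Omega u$, and we are reduced to a lower bound for the mass $m(t)$.

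For $\beta=1$ the mass bound is immediate from Lemma \ref{le2.1}, which gives $m(t)\ge\min\{r/\mu,m_0\}>0$. For $\beta>1$ we use the ODE for the mass together with $u\le K$. We have
\[
\frac{d}{dt}\int_\Omega u=\int_\Omega u^\alpha\Bigl(r-\mu\int_\Omega u^\beta\Bigr)\ge-\mu\int_\Omega u^\alpha\int_\Omega u^\beta .
\]
Since $\alpha,\beta\ge1$ and $u\le K$, we have $u^\alpha\le K^{\alpha-1}u$ and $\int_\Omega u^\beta\le K^\beta|\Omega|=K^\beta$. This yields $m'(t)\ge-\mu K^{\alpha+\beta-1}m(t)$, which only produces an exponentially decaying bound, so this is the main obstacle. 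To get a time-uniform bound, I would instead use the positive part of the source when the mass is small: $\int_\Omega u^\beta\le K^{\beta-1}m(t)$. Then
\[
m'(t)\ge\Bigl(r-\mu K^{\beta-1}m(t)\Bigr)\int_\Omega u^\alpha ,
\]
and the right-hand side is nonnegative whenever $m(t)\le r/(\mu K^{\beta-1})$. A standard comparison argument then gives
\[
m(t)\ge\min\Bigl\{m_0,\frac{r}{\mu K^{\beta-1}}\Bigr\}>0
\]
for all $t$: if $m$ ever dropped below this level, it would have to be nonincreasing there, which contradicts $m'\ge0$ on that set.

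Combining the three steps gives $\delta_v:=c_0e^{-2}K^{\gamma-1}\min\{m_0,r/(\mu K^{\beta-1})\}$, with $K^{\beta-1}=1$ when $\beta=1$.
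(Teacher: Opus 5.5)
Your proof is correct and follows essentially the same route as the paper. The paper gets $v\ge\delta_0\int_\Omega u^\gamma$ by citing Lemma 2.1 of Fujie--Winkler--Yokota, whose proof is the resolvent/Neumann heat-kernel argument you wrote out. It then uses $u^\gamma\ge K^{\gamma-1}u$ and the same mass lower bound $m(t)\ge\min\{m_0,\frac{r}{\mu K^{\beta-1}}\}$, obtained from $\int_\Omega u^\beta\le K^{\beta-1}m(t)$ and ODE comparison for $\beta>1$, and from Lemma \ref{le2.1} for $\beta=1$.
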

{\bf Proof.}
Denote the total mass by $m(t):=\int_\Omega u(x,t)\,dx$.
For \(\beta=1\), Lemma \ref{le2.1} directly provides
\[
m(t)\geq\min\left\{\int_\Omega u_0,\frac{r}{\mu}\right\}=:m_*>0.
\]
For \(\beta>1\), the upper bound for \(u\) yields
\[
\int_\Omega u^\beta\leq K^{\beta-1}m(t).
\]
Integrating the first equation in \eqref{1.3} over \(\Omega\) produces
\[
m'(t)=\left(r-\mu\int_\Omega u^\beta\right)\int_\Omega u^\alpha.
\]
The condition $m(t)\leq\frac{r}{\mu K^{\beta-1}}$
ensures that \(m'(t)\geq0\). An ODE comparison argument then gives
\[
m(t)\geq\min\left\{\int_\Omega u_0,\frac{r}{\mu K^{\beta-1}}\right\}=:m_*>0.
\]
On the other hand, proceeding as in the proof of Lemma 2.1 in \cite{ref49}
with \(u\) therein replaced by \(u^\gamma\), we infer that
\[
v(x,t)\geq\delta_0\int_\Omega u^\gamma(x,t)\,dx,
\]
where \(\delta_0>0\) depends only on \(\Omega\). Since
\(0\leq u\leq K\) and \(0<\gamma\leq1\), we have
\[
u^\gamma\geq K^{\gamma-1}u.
\]
Consequently, we deduce that
\[
v(x,t)
\geq
\delta_0K^{\gamma-1}m(t)
\geq
\delta_0K^{\gamma-1}m_*
=:\delta_v>0.
\]
This completes the proof.
\(\hfill\Box\)

We now prove Theorem \ref{th3.4}.\\
{\bf Proof of Theorem \ref{th3.4}.}
Fix \(p_*>\max\{N,2\}\). By Theorem \ref{th3.2}, we have
$$\sup_{0<t<T_{\max}}\|u(\cdot,t)\|_{L^{p_*}(\Omega)}<\infty.$$
Since \(0<\gamma\leq1\) under \(\mathcal{H}\), it follows that
$$
\sup_{0<t<T_{\max}}
\|u^\gamma(\cdot,t)\|_{L^{\frac{p_*}{\gamma}}(\Omega)}
=\sup_{0<t<T_{\max}}
\|u(\cdot,t)\|_{L^{p_*}(\Omega)}^\gamma<\infty.
$$
Due to \(\frac{p_*}{\gamma}>N\), elliptic regularity gives
$$
\sup_{0<t<T_{\max}}\|\nabla v(\cdot,t)\|_{L^\infty(\Omega)}\leq c_1<\infty,
$$
where \(c_1>0\) is independent of \(p\).

\par\medskip
\noindent{\bf I. The superlinear damping $\beta>1$.}

\smallskip
\noindent{\it I.1. The higher-dimensional setting $N\geq3$.}

Combining \eqref{4.1} and \eqref{4.2}, we have
\begin{align}\label{5.1}
    \frac{d}{dt} \int_{\Omega} u^p
    &\le -\frac{3(p-1)}{p} \int_{\Omega} |\nabla u^{\frac{p}{2}}|^2
    + \frac{p-1}{2p} \int_{\Omega} u^p
    + c_2 \frac{p-1}{p} p^{\frac{1}{1-k}} \int_{\Omega} u^{p+1} \nonumber\\
    &\quad + \chi k(p-1) \int_{\Omega} u^p \frac{|\nabla v|^2}{v^{k+1}}
    + rp \int_{\Omega} u^{p+\alpha-1}
    - \mu p \int_{\Omega} u^{p+\alpha-1} \int_{\Omega} u^\beta,
\end{align}
where $c_2 = 2^{\frac{k}{1-k}} \chi^{\frac{1}{1-k}}$.
Applying Young's inequality and Lemma \ref{le2.2} with $\lambda_4 = \frac{1}{4\chi p}$ yields
\begin{align}\label{5.2}
    k\chi(p-1)\int_{\Omega}u^p\frac{|\nabla v|^2}{v^{1+k}}
    &= k\chi(p-1)\int_{\Omega} \left(u^p\frac{|\nabla v|^2}{v^2}\lambda_4\right)^{\frac{1+k}{2}}
    \left(u^p|\nabla v|^2\lambda_4^{-\frac{1+k}{1-k}}\right)^{\frac{1-k}{2}} \nonumber\\
    &\le \frac{\chi k(1+k)(p-1)}{2}\lambda_4\int_{\Omega}u^p\frac{|\nabla v|^2}{v^2}
    + \frac{\chi k(1-k)(p-1)}{2}\lambda_4^{-\frac{1+k}{1-k}}\int_{\Omega}u^p|\nabla v|^2 \nonumber\\
    &\le \chi\lambda_4(p-1)\int_{\Omega}u^p\frac{|\nabla v|^2}{v^2}
    + \chi(p-1)\lambda_4^{-\frac{1+k}{1-k}}\int_{\Omega}u^p|\nabla v|^2 \nonumber\\
    &\le 4\chi\lambda_4(p-1)\int_{\Omega}|\nabla u^{\frac{p}{2}}|^2
    + \chi(p-1)\left(2\lambda_4 + \lambda_4^{-\frac{1+k}{1-k}}c_1^2\right)\int_{\Omega}u^p \nonumber\\
    &\le \frac{p-1}{p}\int_{\Omega}|\nabla u^{\frac{p}{2}}|^2
    + \left(\frac{p-1}{2p} + c_3\frac{p-1}{p}p^{\frac{2}{1-k}}\right)\int_{\Omega}u^p,
\end{align}
where $c_3 = 4^{\frac{1+k}{1-k}}\chi^{\frac{2}{1-k}}c_1^2$.
Substituting \eqref{5.2} into \eqref{5.1}, we find
\begin{align}\label{5.2.1}
    &\quad\frac{d}{dt} \int_{\Omega} u^p
    + \frac{2(p-1)}{p} \int_{\Omega} |\nabla u^{\frac{p}{2}}|^2
    + \mu p \int_{\Omega} u^{p+\alpha-1} \int_{\Omega} u^\beta \nonumber\\
    &\le \left(p + c_3 p^{\frac{2}{1-k}}\right) \int_{\Omega} u^p
    + c_2 p^{\frac{1}{1-k}} \int_{\Omega} u^{p+1}
    + rp \int_{\Omega} u^{p+\alpha-1},
\end{align}
which implies by Young's inequality that
\begin{align*}
    \frac{d}{dt} \int_{\Omega} u^p
    + \frac{2(p-1)}{p} \int_{\Omega} |\nabla u^{\frac{p}{2}}|^2
    + \mu p \int_{\Omega} u^{p+\alpha-1} \int_{\Omega} u^\beta
    &\le \left[(r+1)p + c_3 p^{\frac{2}{1-k}} + c_2 p^{\frac{1}{1-k}}\right] \int_{\Omega} u^{p+\xi-1} \\
    &\quad + \left[(r+1)p + c_3 p^{\frac{2}{1-k}} + c_2 p^{\frac{1}{1-k}}\right] c_4 \\
    &\le c_5 p^{\frac{2}{1-k}} \int_{\Omega} u^{p+\xi-1}
    + c_5 p^{\frac{2}{1-k}} c_4,
\end{align*}
where $\xi = \max\{\alpha, 2\}$, $c_4>0$ is independent of $p$ and $c_5 = 3\max\{r+1, c_2, c_3,1\}$.
Define $p_j = 2^j + \beta + \alpha - 1$ for \(j=0,1,2,\ldots\). Then we have
\begin{align}\label{5.3}
    \frac{d}{dt} \int_{\Omega} u^{p_j}
    + \frac{2(p_j-1)}{p_j} \int_{\Omega} |\nabla u^{\frac{p_j}{2}}|^2
    + \mu p_j \int_{\Omega} u^{p_j+\alpha-1} \int_{\Omega} u^\beta
    \le c_5 p_j^{\frac{2}{1-k}} \int_{\Omega} u^{p_j+\xi-1}
    + c_5 p_j^{\frac{2}{1-k}} c_4.
\end{align}
Taking $\omega = u^{\frac{p_j}{2}}$, $s = \frac{2(p_j+\xi-1)}{p_j}$, $l = \frac{2p_{j-1}}{p_j}$,
$C_{N_1} = \frac{1}{4c_5 p_j^{\frac{2}{1-k}}}$ and $C_{N_2} = \frac{1}{4c_5 p_j^{\frac{2}{1-k}}}$ in Lemma \ref{le2.5},
we get
\begin{align}\label{5.4}
    c_5 p_j^{\frac{2}{1-k}} \int_{\Omega} u^{p_j+\xi-1}
    \le C(N) p_j^{\frac{2}{1-k}} C_{N_1}^{-\frac{1}{\frac{2}{\eta s}-1}} \left( \int_{\Omega} u^{p_{j-1}} dx \right)^{\hat{\theta}}
    + \frac{1}{4} \int_{\Omega} |\nabla u^{\frac{p_j}{2}}|^2
    + \frac{1}{4} \int_{\Omega} u^{p_j}.
\end{align}
Here $\frac{2}{\eta s} = \frac{p_j - \frac{2p_{j-1}}{q}}{p_j+\xi-1 - p_{j-1}}$
and $\hat{\theta} = \frac{p_j - \frac{2(p_j+\xi-1)}{q}}{(1-\frac{2}{q})p_{j-1} - \xi + 1}$.
Substituting \eqref{5.4} into \eqref{5.3} and noting that $\frac{2(p_j-1)}{p_j} \ge 1$, we infer
\begin{align}\label{5.5}
    &\quad\frac{d}{dt} \int_{\Omega} u^{p_j}
    + \frac{3}{4} \int_{\Omega} |\nabla u^{\frac{p_j}{2}}|^2
    + \mu p_j \int_{\Omega} u^{p_j+\alpha-1} \int_{\Omega} u^\beta \nonumber\\
    &\le C(N) p_j^{\frac{2}{1-k}} C_{N_1}^{-\frac{1}{\frac{2}{\eta s}-1}} \left( \int_{\Omega} u^{p_{j-1}}
    \right)^{\hat{\theta}} + \frac{1}{2} \int_{\Omega} u^{p_j} + c_5 p_j^{\frac{2}{1-k}} c_4 \nonumber\\
    &\leq c_6 p_j^\delta \left( \int_{\Omega} u^{p_{j-1}} \right)^{\hat{\theta}}
    + \frac{1}{2} \int_{\Omega} u^{p_j}
    + c_5 p_j^{\frac{2}{1-k}} c_4,
\end{align}
where \(R_j:=\frac{2}{\eta s}\), $c_6:=C(N)\sup_{j\geq1}(4c_5)^{\frac{1}{R_j-1}}$ and $\delta:=\sup_{j\geq1}\frac{2R_j}{(R_j-1)(1-k)}$.
A direct calculation gives $R_j\to\frac{N+2}{N}>1$ and
hence \(c_6\) and \(\delta\) are finite constants independent of \(j\).
On the other hand, applying Lemma \ref{le2.5} with $\omega = u^{\frac{p_j}{2}}$, $s=2$,
$l = \frac{2p_{j-1}}{p_j}$, $C_{N_1} = \frac{1}{4}$, and $C_{N_2} = \frac{1}{2}$, we obtain
\begin{align}\label{5.6}
    \frac{1}{2} \int_{\Omega} u^{p_j}
    \le C(N) \left( \int_{\Omega} u^{p_{j-1}} \right)^{\frac{p_j}{p_{j-1}}}
    + \frac{1}{4} \int_{\Omega} |\nabla u^{\frac{p_j}{2}}|^2.
\end{align}
By Lemma \ref{le2.6}, we derive
\begin{align*}
    C(N) \left( \int_{\Omega} u^{p_{j-1}} \, dx \right)^{\frac{p_j}{p_{j-1}}}
    \le C(N) \left( \int_{\Omega} u^\beta \right)^{\frac{p_j(1-\sigma)}{\beta}}
    \left( \int_{\Omega} u^{p_j+\alpha-1} \right)^{\frac{\sigma p_j}{p_j+\alpha-1}},
\end{align*}
where $\sigma$ is determined by $\frac{1-\sigma}{\beta} + \frac{\sigma}{p_j+\alpha-1} = \frac{1}{p_{j-1}}$.
The identity $p_{j-1} = \frac{p_j+\alpha-1+\beta}{2}$ implies
$\frac{p_j(1-\sigma)}{\beta} = \frac{\sigma p_j}{p_j+\alpha-1}$.
It then follows from Young's inequality that
\begin{align}\label{5.7}
    C(N) \left( \int_{\Omega} u^{p_{j-1}} \, dx \right)^{\frac{p_j}{p_{j-1}}}
    &\le C(N) \left( \int_{\Omega} u^\beta \int_{\Omega} u^{p_j+\alpha-1} \right)^{\frac{\sigma p_j}{p_j+\alpha-1}} \nonumber\\
    &\le \frac{\mu p_j}{4} \int_{\Omega} u^\beta \int_{\Omega} u^{p_j+\alpha-1} + c_7.
\end{align}
Plugging \eqref{5.7} into \eqref{5.6}, we obtain
\begin{align}\label{5.8}
    \frac{1}{2} \int_{\Omega} u^{p_j}
    \le \frac{\mu p_j}{4} \int_{\Omega} u^{p_j+\alpha-1} \int_{\Omega} u^\beta
    + \frac{1}{4} \int_{\Omega} |\nabla u^{\frac{p_j}{2}}|^2 + c_7.
\end{align}
The estimate $\hat{\theta}\leq2$ is verified directly under $\mathrm{(H_1)}$.
Combining \eqref{5.8} with \eqref{5.5}, we then have
\begin{align*}
    \frac{d}{dt} \int_{\Omega} u^{p_j} + \int_{\Omega} u^{p_j}
    &\le c_6 p_j^\delta \left( \int_{\Omega} u^{p_{j-1}} \right)^{\hat{\theta}}
    + c_5 p_j^{\frac{2}{1-k}} c_4 + c_8 \\
    &\le c_6 p_j^\delta \left( \int_{\Omega} u^{p_{j-1}} \right)^{\hat{\theta}}
    + c_9 p_j^{\frac{2}{1-k}} \\
    &\le \max\{c_6, c_9\} \, p_j^b \left[ \left( \int_{\Omega} u^{p_{j-1}} \right)^{\hat{\theta}} + 1 \right] \\
    &\le 2\max\{c_6, c_9\} \, p_j^b \max\left\{ \left( \int_{\Omega} u^{p_{j-1}} \right)^2, \, 1 \right\},
\end{align*}
where $b = \max\left\{ \delta, \frac{2}{1-k} \right\}$.
Applying Lemma \ref{le2.9} with $y_j(t) = \int_{\Omega} u^{p_j}$, $\tilde{m} = 2\max\{c_6, c_9\} (\alpha+\beta)^b$
and $h^{bj} = 2^{bj}$, we arrive at
\begin{align}\label{5.9}
\int_{\Omega} u^{p_j}
\le (2\tilde{m})^{2^j-1} 2^{b(2^{j+1}-2-j)}
\max\left\{ \sup_{0<t<T_{\max}} \left( \int_{\Omega} u^{p_0} \right)^{2^j}, \tilde{M}^{2^j} \right\}.
\end{align}
Here we set $M_1:=\max\left\{1,\|u_0\|_{L^{\alpha+\beta}(\Omega)},\|u_0\|_{L^\infty(\Omega)}\right\}$
and $\tilde M:=M_1^{p_0}$.
The fact that \(p_j\leq p_0 2^j\) implies
\[
y_j(0)=\int_\Omega u_0^{p_j}
\leq M_1^{p_j}
\leq\tilde M^{2^j}.
\]
Taking the $\frac{1}{p_j}$-th power in \eqref{5.9} and letting $j\to\infty$, we obtain
\begin{align*}
\|u\|_{L^\infty(\Omega)}\le 2^{2b+1}\tilde{m}
\max\left\{\sup_{0<t<T_{\max}}\int_\Omega u^{p_0},\tilde M\right\}.
\end{align*}
Theorem \ref{th3.2} gives
\begin{align*}
\int_{\Omega} u^{p_0} = \int_{\Omega} u^{\beta+\alpha} \le c(p_0).
\end{align*}

\smallskip
\noindent{\it I.2. The lower-dimensional setting $N\in\{1,2\}$.}

Set \(p_j=2^j+\alpha+\beta+1\).
Applying Lemma~\ref{le2.5} as in \eqref{5.4} gives
$R_j=\frac{p_j-\left(1-\frac2N\right)p_{j-1}}{p_j+\xi-1-p_{j-1}}$ and
$\hat{\theta}=\frac{p_j-\left(1-\frac2N\right)(p_j+\xi-1)}{\frac2N p_{j-1}-\xi+1}$.
A direct calculation under \(\mathrm{(H_1)}\) shows that $R_j\to\frac{N+2}{N}>1$
and $\hat{\theta}\leq2$.
Repeating the argument leading to \eqref{5.6} with $l=\frac{2(p_{j-1}-1)}{p_j}$
replaces \(p_{j-1}\) by \(p_{j-1}-1\) on its right-hand side.
Since $p_{j-1}-1=\frac{p_j+\alpha+\beta-1}{2}$,
the same application of Lemma \ref{le2.6} yields \eqref{5.8}.
Theorem \ref{th3.2} ensures the initial estimate at \(p_0=\alpha+\beta+2\), and the iteration in \eqref{5.9} completes this case.

\par\medskip
\noindent{\bf II. The linear damping $\beta=1$.}

\smallskip
\noindent{\bf II.1. The subcritical initial mass $m_0 < \frac{r}{\mu}$.}\par

\smallskip
\noindent{\it (a) The one-dimensional setting $N=1$.}
Here $\gamma=1$.
Define $p_j = 2^j + \xi + 1$, where $\xi = \max\{\alpha, 2\}$. Arguing as in the derivation of \eqref{5.3}, we have
\begin{align*}
\frac{d}{dt} \int_{\Omega} u^{p_j}
+ \frac{2(p_j-1)}{p_j} \int_{\Omega} |\nabla u^{\frac{p_j}{2}}|^2
+ \mu p_j \int_{\Omega} u^{p_j+\alpha-1} \int_{\Omega} u
\le c_5 p_j^{\frac{2}{1-k}} \int_{\Omega} u^{p_j+\xi-1}
+ c_5 p_j^{\frac{2}{1-k}} c_4.
\end{align*}
As in the case $\beta>1$, an application of Lemma \ref{le2.5} yields
\begin{align}\label{5.10}
\frac{d}{dt} \int_{\Omega} u^{p_j}
+ \frac{3}{4} \int_{\Omega} |\nabla u^{\frac{p_j}{2}}|^2
+ \mu p_j \int_{\Omega} u^{p_j+\alpha-1} \int_{\Omega} u
\le c_{10} p_j^\delta \left( \int_{\Omega} u^{p_{j-1}} \right)^{\hat{\theta}}
+ \frac{1}{2} \int_{\Omega} u^{p_j}
+ c_5 p_j^{\frac{2}{1-k}} c_4,
\end{align}
where $\tilde R_j:=\frac{2}{\eta s}=\frac{p_j+p_{j-1}}{p_j+\xi-1-p_{j-1}}$
and $\hat{\theta}=\frac{2p_j+\xi-1}{2p_{j-1}-\xi+1}$.
Set $c_{10}:=C(N)\sup_{j\geq1}(4c_5)^{\frac{1}{\tilde R_j-1}}$
and $\delta:=\sup_{j\geq1}\frac{2\tilde R_j}{(\tilde R_j-1)(1-k)}$.
Since \(\tilde R_j\to3>1\), the constants
\(c_{10}\) and \(\delta\) are finite and independent of \(j\).
Using \(\frac{p_j}{p_{j-1}}\leq2\) together with
\(z^\varrho\leq z^2+1\) for \(z\geq0\) and \(0<\varrho\leq2\),
the same argument as in \eqref{5.6} gives
\begin{align}\label{5.11}
\frac12\int_\Omega u^{p_j}
\leq c_{10}p_j^\delta
\left(\int_\Omega u^{p_{j-1}}\right)^2
+\frac14\int_\Omega
|\nabla u^{\frac{p_j}{2}}|^2+c_{11},
\end{align}
where $c_{11}>0$ is independent of $j$.
Since $1\leq\alpha<3$, a direct calculation gives $2-\hat{\theta}=\frac{5-\xi}{2p_{j-1}-\xi+1}>0$ and thus \(\hat{\theta}<2\).
Combining \eqref{5.10} with \eqref{5.11}
yields the same differential inequality as in the case
$\beta>1$. This iteration then provides the uniform $L^\infty$-estimate.

\smallskip
\noindent{\it (b) The higher-dimensional setting $N\geq2$.}
Here $0<\gamma<\frac{2}{N}$.
Combining \eqref{4.32} and \eqref{5.2}, for every \(p\geq2\), we obtain
\begin{align}\label{6.1}
\frac{d}{dt} \int_{\Omega} u^{p}
&\le -\frac{2(p-1)}{p} \int_{\Omega} |\nabla u^{\frac{p}{2}}|^2
+ \left( \frac{p-1}{p} + c_3 \frac{p-1}{p} p^{\frac{2}{1-k}} \right) \int_{\Omega} u^{p}\nonumber\\
&\quad+ c_2 p^{\frac{1}{1-k}} \int_{\Omega} u^{p+\gamma}
+ (r-\mu m(t)) p \int_{\Omega} u^{p+\alpha-1},
\end{align}
where $c_2,c_3>0$ are defined above.
By Young's inequality, we infer that
\begin{align*}
\frac{d}{dt} \int_{\Omega} u^{p}
\le -\frac{2(p-1)}{p} \int_{\Omega} |\nabla u^{\frac{p}{2}}|^2
+ c_{12} p^{\frac{2}{1-k}} \int_{\Omega} u^{p+\varphi}
+ c_{12} p^{\frac{2}{1-k}} c_{13},
\end{align*}
where $c_{12} = 1 + c_3 + c_2 + r - \mu m_0$, $c_{13}>0$ is independent of $p$ and $\varphi = \max\{\gamma, \alpha-1\}$.
Let \(p_j=2^j+\varphi+1\). For \(N=2\), Lemma \ref{le2.5} gives
$R_j\to2$ and $\hat{\theta}-2=\frac{\varphi-1}{p_{j-1}-\varphi}<0$.
For \(N\geq3\), one has
$\hat{\theta}-2=\frac{\varphi-\frac2N}{\frac2N p_{j-1}-\varphi}<0$.
Thus the preceding Moser iteration applies,
with the initial estimate provided by Theorem \ref{th3.2}.

\smallskip
\noindent{\bf II.2. The critical or supercritical initial mass $m_0 \geq \frac{r}{\mu}$.}\par

\smallskip
\noindent{\it (a) The one-dimensional setting $N=1$.}

Here $\gamma=1$.
It follows from \eqref{5.2.1} that
\begin{align*}
\frac{d}{dt} \int_{\Omega} u^p
+ \frac{2(p-1)}{p} \int_{\Omega} |\nabla u^{\frac{p}{2}}|^2
+ (\mu m(t)-r)p \int_{\Omega} u^{p+\alpha-1}
\le \left( p + c_3 p^{\frac{2}{1-k}} \right) \int_{\Omega} u^p
+ c_2 p^{\frac{1}{1-k}} \int_{\Omega} u^{p+1}.
\end{align*}
Using $\frac{r}{\mu} \le m(t) \le m_0$ and Young's inequality, we obtain
\begin{align*}
\frac{d}{dt} \int_{\Omega} u^p
+ \frac{2(p-1)}{p} \int_{\Omega} |\nabla u^{\frac{p}{2}}|^2
\le c_{14} p^{\frac{2}{1-k}} \int_{\Omega} u^{p+1}
+ c_{15} p^{\frac{2}{1-k}},
\end{align*}
where $c_{14} = c_3 + c_2 + 1$ and $c_{15}>0$ is independent of $p$.
For \(p_j=2^j+3\), Lemma \ref{le2.5} gives $R_j\to3$
and $\hat{\theta}=\frac{2p_j+1}{2p_{j-1}-1}<2$.
The uniform \(L^\infty\)-bound is established by the same iteration.

\smallskip
\noindent{\it (b) The higher-dimensional setting $N\geq2$.}

Here $0<\gamma<\frac{2}{N}$ and $r-\mu m(t)\leq0$.
Taking $p=p_j$ in \eqref{6.1} and applying Young's inequality, we derive
\begin{align*}
\frac{d}{dt}\int_{\Omega}u^{p_j}
&\leq-\frac{2(p_j-1)}{p_j}\int_{\Omega}|\nabla u^{\frac{p_j}{2}}|^2
+c_{16}p_j^{\frac{2}{1-k}}\int_{\Omega}u^{p_j+\gamma}+c_{16}p_j^{\frac{2}{1-k}},
\end{align*}
where \(c_{16}>0\) is independent of \(j\).
Let \(p_j=2^j+\gamma+1\). For \(N=2\), Lemma \ref{le2.5} gives
$R_j=\frac{p_j}{p_j+\gamma-p_{j-1}}\to2$
and $\hat{\theta}-2=\frac{\gamma-1}{p_{j-1}-\gamma}<0$.
For \(N\geq3\), we find that
$\hat{\theta}-2=\frac{\gamma-\frac2N}{\frac2N p_{j-1}-\gamma}<0$
as \(\gamma<\frac2N\).
The uniform \(L^\infty\)-estimate is provided by the preceding iteration.

If \(T_{\max}<\infty\), the uniform \(L^\infty\)-estimate for \(u\)
and Lemma \ref{lem:v-lower} contradict the extensibility criterion
in Proposition \ref{pro4.1}.
Thus \(T_{\max}=\infty\), and the solution of \eqref{1.3} is global and uniformly bounded.
$\hfill\Box$

\section*{Conflicts of Interest}
The authors have no conflict of interest to declare.



\end{document}